\numberwithin{equation}{section}
\newtheorem{theorem}{Theorem}[section]
\newtheorem{lemma}[theorem]{Lemma}
\newtheorem{corollary}[theorem]{Corollary}
\theoremstyle{definition}
\newtheorem{definition}[theorem]{Definition}
\newtheorem{remark}[theorem]{Remark}
\newtheorem{example}[theorem]{Example}
\newcommand{\K}{\mathbb{K}}
\newcommand{\M}{\mathbb{M}}
\newcommand{\N}{\mathbb{N}}
\newcommand{\R}{\mathbb{R}}
\newcommand{\Z}{\mathbb{Z}}
\newcommand{\GL}{\mathbb{GL}}
\newcommand{\Ort}{\mathbb{O}}
\newcommand{\SO}{\mathbb{SO}}
\newcommand{\Sym}{\mathbb{S}}
\newcommand{\cNT}{\mathcal{NT}}
\newcommand{\cR}{\mathcal{R}}
\newcommand{\cT}{\mathcal{T}}
\newcommand{\eqdf}{\mathrel{\mathop:}=}
\newcommand{\abs}[1]{\left\vert#1\right\vert}
\newcommand{\Bif}{\mathrm{Bif}}
\newcommand{\cl}[1]{\mathrm{cl}\left(#1\right)}
\newcommand{\cond}{\;\vert\;\;\;}
\newcommand{\DEG}{\mathrm{DEG}}
\newcommand{\diag}[1]{\mathrm{diag}\left(#1\right)}
\newcommand{\GlBif}{\mathrm{GlBif}}
\newcommand{\halfline}{(0,+\infty)}
\newcommand{\hilb}{H^{1}_{2\pi}}
\newcommand{\I}{\mathrm{I}}
\newcommand{\im}{\mathrm{im}}
\newcommand{\ind}[2]{\mathrm{i}\left(#1,#2\right)}
\newcommand{\IND}{\mathrm{IND}}
\newcommand{\inn}[2]{\left\langle#1,#2\right\rangle}
\newcommand{\Inter}[1]{\mathrm{Int}\left(#1\right)}
\newcommand{\Jac}{\mathrm{Jac}}
\newcommand{\morse}[1]{\mathrm{m}^{-}\left(#1\right)}
\newcommand{\mult}[1]{\mu(#1)}
\newcommand{\norm}[1]{\left\Vert#1\right\Vert}
\newcommand{\nspect}[1]{\sigma_{-}\left(#1\right)}
\newcommand{\pspect}[1]{\sigma_{+}\left(#1\right)}
\newcommand{\Psing}{\mathrm{P_{sing}}}
\newcommand{\set}[1]{\left\{#1\right\}}
\newcommand{\sgn}[1]{\mathrm{sgn}#1}
\newcommand{\spect}[1]{\sigma\left(#1\right)}
\begin{document}

\title[On the structure of the set of bifurcation points]{On the
structure of the set of bifurcation points of periodic solutions
for multiparameter Hamiltonian systems}

\author{Wiktor Radzki}

\thanks{Partially supported by Ministry of Science and Education,
Poland, under grant 1 P03A 009 27, and grant 471-M of Nicolaus
Copernicus University, Toru\'{n}, Poland}

\address{Faculty of Mathematics and Computer Science \\
  Nicolaus Copernicus University \\
  ul. Chopina $12 \slash 18$ \\
  87-100 Toru\'{n}, Poland }

\email{wiktorradzki@yahoo.com}

\keywords{Hamiltonian system; Periodic solution;
Global bifurcation; Symmetry breaking; Topological degree for
\mbox{$\mathbb{SO}(2)$-equi}variant gradient mappings}

\subjclass[2000]{Primary: 34C23, 34C25. Secondary: 70H05, 70H12.}

\date{September 28, 2008}

\begin{abstract}
This paper deals with periodic solutions of the Hamilton
equation
 $\dot{x}(t)=J\nabla_x H(x(t),\lambda),$
where $H\in C^{2,0}(\mathbb{R}^{2n}\times\mathbb{R}^k,\mathbb{R})$
and $\lambda\in\mathbb{R}^k$ is a~parameter. Theorems on global
bifurcation of solutions with periods $\frac{2\pi}{j},$
$j\in\mathbb{N},$ from a~stationary point
 $(x_0,\lambda_0)\in\mathbb{R}^{2n}\times\mathbb{R}^k$
are proved.
$\nabla_x^2 H(x_0,\lambda_0)$ can be singular. However, it is
assumed that the local topological
degree of $\nabla_x H(\cdot,\lambda_0)$ at $x_0$ is nonzero.
For systems satisfying $\nabla_x H(x_0,\lambda)=0$ for all
$\lambda\in\mathbb{R}^k$ it is shown that (global) bifurcation
points of solutions with periods $\frac{2\pi}{j}$ can be identified
with zeros of appropriate continuous
functions $F_j\colon\mathbb{R}^k\rightarrow\mathbb{R}.$
If, for all $\lambda\in\mathbb{R}^k,$
$\nabla_x^2H(x_0,\lambda)=\mathrm{diag}(A(\lambda),B(\lambda)),$
where $A(\lambda)$ and $B(\lambda)$ are $(n\times n)$-matrices,
then $F_j$ can be defined by
$F_j(\lambda)=\det[A(\lambda)B(\lambda)-j^2I].$
Symmetry breaking results concerning bifurcation of solutions
with different minimal periods are obtained.
A~geometric description of the set of bifurcation points is given.
Examples of constructive application of the theorems proved to
analytical and numerical investigation and visualization of the set
of all bifurcation points in given domain are provided.

This paper is based on a~part of the author's thesis
[W. Radzki, \emph{Branching points of periodic solutions of
autonomous Hamiltonian systems} (Polish), PhD thesis,
Nicolaus Copernicus University, Faculty of Mathematics
and Computer Science, Toru\'{n}, 2005].
\end{abstract}

\maketitle

\section{Introduction}

The aim of this paper is to describe the set of bifurcation
points of solutions of the Hamilton equation with the condition of
\mbox{$2\pi$-perio}dicy of solutions
\begin{equation}{\label{parhamniel}}
\left\{
\begin{aligned}
\dot{x}(t) &=J\nabla_x H(x(t),\lambda)\\
x(0) &=x(2\pi),
\end{aligned}
\right.
\end{equation}
where $H\in C^{2,0}(\R^{2n}\times\R^k,\R)$ and $\lambda\in\R^k$ is
a~parameter. In particular, this work is intended to investigate
the subsets of the set of bifurcation points consisting of global
bifurcation points of solutions with periods $\frac{2\pi}{j},$
$j\in\N,$ and to prove theorems concerning symmetry breaking
points, defined as bifurcation points of solutions with different
minimal periods.

In the case of the systems with linear dependence on one parameter
problem~\eqref{parhamniel} can be written as
\begin{equation}{\label{parham}}
\left\{
\begin{aligned}
\dot{x}(t) &=\lambda J\nabla H(x(t))\\
x(0) &=x(2\pi),
\end{aligned}
\right.
\end{equation}
where $H\in C^2(\R^{2n},\R)$ and $\lambda\in\R.$ Every solution
$(x,\lambda)$ of~\eqref{parham} with $\lambda>0$ can be translated
to \mbox{$2\pi\lambda$-pe}riodic solution of the equation
\begin{equation}{\label{ham}}
\dot{x}(t)=J\nabla H(x(t)).
\end{equation}
Consequently, for every connected branch of nontrivial
solutions of~\eqref{parham} bifurcating (in a~suitable space) from
 $(x_0,\lambda_0)\in (\nabla H)^{-1}(\set{0}) \times\halfline$
one can find the corresponding connected branch of
nonstationary periodic trajectories of~\eqref{ham} emanating from
$x_0$ with periods tending to $2\pi\lambda_0$ at $x_0.$ Particulary
interesting systems are those for which the Hessian matrix of $H$ at
$x_0$ has the block-diagonal form:
$\nabla^2H(x_0)= \diag{A,B},$
where $A$ and $B$ are real symmetric \mbox{$(n\times n)$-ma}trices.
This condition is satisfied in the generic case of
Hamiltonian function being the sum of kinetic energy dependent on
generalized momenta and potential energy dependent on generalized
coordinates, for example if
\begin{equation}{\label{form}}
H(x)=H(y,z)= \frac{1}{2}\langle M^{-1}y,y\rangle +V(z),
\end{equation}
where $y,z\in\R^n,$ $V\in C^2(\R^n,\R)$ and $M$ is a~nonsingular
real symmetric \mbox{$(n\times n)$-ma}\-trix.
Equation~\eqref{ham} with $H$ given by~\eqref{form} is
equivalent to the Newton equation
\begin{equation}{\label{newton}}
M\ddot{z}(t)=-\nabla V(z(t)).
\end{equation}

If $x_0$ is a stationary point of~\eqref{ham},
$J\nabla ^2H(x_0)$ is nonsingular, and it has nonresonant
purely imaginary eigenvalues then
the Lyapunov centre theorem~\cite{L} ensures the existence of
a~one-parameter family of nonstationary periodic solutions
of~\eqref{ham} emanating from $x_0.$ The Lyapunov centre theorem
can be derived from the Hopf bifurcation theorem~\cite{Hopf}.
Berger~\cite{Br} (see also~\cite{Brg,MW}), Weinstein~\cite{W},
Moser~\cite{M}, and Fadell and Rabinowitz~\cite{FR} proved the
existence of a~sequence of periodic solutions of~\eqref{ham}
convergent to a~nondegenerate stationary point $x_0$ in the case of
possibly resonant purely imaginary eigenvalues
of $J\nabla ^2H(x_0).$ (The theorem of Berger concerns second order
equations, including~\eqref{newton} for $M=I.$) Global bifurcation
theorems in nondegenerate case have been proved by G\c{e}ba and
Marzantowicz~\cite{GM} by using topological degree for
\mbox{$\SO(2)$-equi}variant mappings.

Zhu~\cite{Z} and Szulkin~\cite{S} used Morse theoretic methods and
they proved the existence of a sequence of periodic solutions
of~\eqref{ham} emanating from a~stationary point which can be
degenerate. Dancer and Rybicki~\cite{DR} obtained a~global
bifurcation theorem of Rabinowitz type (see~\cite{Rab})
for~\eqref{parham} in the case of possibly
degenerate stationary point by using the topological degree theory
for \mbox{$\SO(2)$-equi}variant gradient maps. The results
from~\cite{DR} were applied by the author~\cite{Rd} and the author
with Rybicki~\cite{RR} to the description of connected branches of
bifurcation of~\eqref{parham} and emanation of~\eqref{ham}
in possibly degenerate case under assumptions written in terms
of eigenvalues of $\nabla^2H(x_0)$ and the local topological
degree of $\nabla H$ in a~neighbourhood of $x_0.$ The examples
of applications of the results from~\cite{Rd,RR} were given by
Maciejewski, the author, and Rybicki~\cite{RRM}.

The structure of the set of bifurcation points of periodic solutions
of the first order ordinary differential equations with many
parameters was studied by Izydorek and Rybicki~\cite{IR}, and
Rybicki~\cite{RyDIE}. They applied the Krasnosiel'skii
bifurcation theorem (see~\cite{Kra}) and the results of real
algebraic geometry obtained by Szafraniec~\cite{Sf,SfMA}.
However, Izydorek and Rybicki assumed that the Fr\'{e}chet
derivative of the right-hand side of the equation they considered
was zero. In such a~case there is no bifurcation of nonstationary
solutions of Hamiltonian system with fixed period
(see Remark~\ref{remnec}).

In the present paper, which presents the results of a~part of the
author's PhD thesis~\cite{RPhD} (with Corollaries~\ref{infpercor},
\ref{multzdeg}, \ref{multstat}, Examples~\ref{surfdeg},
\ref{surfstat}, and figures added afterwards),
the stationary point $(x_0,\lambda_0)$ can be degenerate,
i.e. $\nabla_x^2H(x_0,\lambda_0)$ can be singular.
However, it is assumed that the local Brouwer degree of
$\nabla_x H(\cdot,\lambda_0)$ in a~neighbourhood of $x_0$
is well defined and nonzero.
(Although theorems without this assumptions and corresponding
examples are also given.) The set of bifurcation points
of~\eqref{parhamniel} is investigated in the case of many
parameters. To this aim a~generalized version
of the global bifurcation
theorem of Dancer and Rybicki~\cite{DR} in the case of Hamiltonian
systems with one parameter is first proved and then it is applied
to the Hamiltonian systems with many parameters.
Also, some results from~\cite{Rd,RR} concerning unbounded
branches of periodic solutions are generalized.
The proofs
exploit the topological degree for \mbox{$\SO(2)$-equi}variant
gradient mappings (see~\cite{Ry}).
Bifurcation points of solutions of~\eqref{parhamniel} with
period $\frac{2\pi}{j},$ $j\in\N,$ (proved to be global bifurcation
points) are identified with zeros of suitable continuous functions
$F_j\colon\R^k\to\R,$ under assumptions written in terms of that
functions. In the case of systems satisfying, for all
$\lambda\in\R^k,$ the condition
 $\nabla_x^2H(x_0,\lambda)=\diag{A(\lambda),B(\lambda)},$
where $A(\lambda)$ and $B(\lambda)$ are some $(n\times n)$-matrices,
the functions $F_j$ are given by
 $F_j(\lambda)=\det[A(\lambda)B(\lambda)-j^2I].$
Symmetry breaking results are obtained.
A~geometric description of the set of bifurcation points
is obtained by using results of real algebraic
geometry~\cite{Sf,SfMA}.
Examples of application of theorems proved in this paper
to analytical and numerical investigation and visualization
of the set of all bifurcation points in given domain are provided.
They demonstrate constructive character of the results obtained in
this paper by using topological degree.

\section{Preliminaries}

In this section notation and terminology are set up and basic
results used in this paper are summarized to make the exposition
self-contained.

\subsection{Algebraic notation}

Let $\M(n,\R)$ be the set of all real \mbox{$(n\times n)$-ma}trices
and let $\GL(n,\R),$ $\Sym(n,\R),$ $\Ort(n,\R)$ be the subsets of
$\M(n,\R)$ consisting of nonsingular, symmetric, and orthogonal
matrices, respectively. For given $n\in\N$ the identity
\mbox{$(n\times n)$-ma}trix is denoted by $I\equiv I_n,$ whereas
\begin{equation*}
J\equiv J_n\eqdf\left[\begin{array}{cc}
 0&-I_n\\
 I_n&0
 \end{array}\right].
\end{equation*}
For any square matrices $A_1,\ldots,A_m$ the symbol
$\diag{A_1,\ldots,A_m}$ stands for the \mbox{block-diag}\-onal matrix
built from $A_1,\ldots,A_m.$

If $A\in\M(n,\R)$ then $\spect{A}$ denotes the spectrum of $A,$
whereas $\pspect{A}$ and $\nspect{A}$ are the sets of real
positive and real negative eigenvalues of $A,$ respectively.
If $\alpha\in\spect{A}$ then $\mult{\alpha}\equiv\mu_A(\alpha)$
denotes the algebraic multiplicity of $\alpha.$ The negative
and the positive Morse index of $A\in\Sym(n,\R)$ are defined as
\begin{equation*}
\morse{A} \eqdf \sum_{\alpha\in\nspect{A}}\mult{\alpha}, \qquad
m^+(A) \eqdf \sum_{\alpha\in\pspect{A}}\mult{\alpha},
\end{equation*}
respectively.

Let a~representation of a~group $G$ on a~linear space $V$ be given.
For every subgroup $H$ of $G$ and every subset $\Omega$ of $V$
it is assumed
\begin{equation*}
\Omega^H  \eqdf \set{v\in \Omega\cond \forall_{h\in H}\;hv=v}
=\set{v\in \Omega\cond H\subset G_v},
\end{equation*}
\begin{equation*}
\Omega_H  \eqdf  \set{v\in \Omega\cond H=G_v},
\end{equation*}
where $G_v$ is the isotropy group of $v.$
Consider another representation of $G$ on a~linear space $W$ and let
$f\colon V\rightarrow W$ be a~\mbox{$G$-equi}\-variant map.
As well known, $G_v\subset G_{f(v)}$ for every $v\in V.$ If $H$ is
a~subgroup of $G$ then $f^H$ denotes the restriction of $f$ to the
pair $(V^H,W^H).$

For $j\in\N\cup\set{0}$ set
\begin{equation*}
\K_j=
\begin{cases}
\Z_j &\text{if $j\in\N$},\\
\SO(2) &\text{if $j=0$}.
\end{cases}
\end{equation*}
For every $j\in\N$ let
$\rho_j\colon\SO(2)\rightarrow \Ort(2,\R)$ be the homomorphism
defined by
\begin{equation*}
\rho_j\left( \left[
\begin{array}{cc}
\cos\phi & -\sin\phi \\
\sin\phi & \cos\phi
\end{array}
\right] \right)= \left[
\begin{array}{cc}
\cos j\phi & -\sin j\phi \\
\sin j\phi & \cos j\phi
\end{array}
\right],\;\;\; 0\leq \phi <2\pi.
\end{equation*}
For $m,j\in\N$ the representation $\R[m,j]$ of $\SO(2)$ is defined
as the direct sum of $m$ copies of the representation
$(\R^2,\rho_j),$ whereas $\R[m,0]$ denotes the identity
representation of $\SO(2)$ on $\R^m.$

Fix $k\in\N.$ For every $j\in\N,$ $K\in\Sym(2n,\R),$ and every
$T\colon\R^k\rightarrow\Sym(2n,\R)$ set
\begin{equation}{\label{oznQ}}
Q_j(K)=\frac{1}{1+j^2}\left[
\begin{array}{cc}
-K&jJ^t\\
jJ&-K
\end{array}
\right], \;\;\;Q_0(K)=-K,
\end{equation}
\begin{equation}{\label{oznLj}}
\begin{aligned}
\Lambda_j(T) &=\{\lambda\in\R^k\cond\det
Q_j(T(\lambda))=0\},\\
\Lambda_0(T) &=\{\lambda\in\R^k\cond\det Q_0(T(\lambda))=0\},
\end{aligned}
\end{equation}
\begin{equation}{\label{oznL}}
\displaystyle\Lambda(T)
=\bigcup_{j\in\N}\Lambda_j(T).
\end{equation}
For $k=1$ let
\begin{equation}{\label{oznLplus}}
\begin{aligned}
\Lambda_j^+(T) &=\Lambda_j(T)\cap\halfline,\\
\Lambda^+(T) &=\Lambda(T)\cap\halfline.
\end{aligned}
\end{equation}

\begin{remark}{\label{qjeven}}
For every $j\in\N$ the eigenvalues of $Q_j(K)$ have even
multiplicity. Indeed, if
 $(v_1,v_2)\in\R^{2n}\times\R^{2n}\backslash\set{(0,0)}$ is
an eigenvector of $Q_j(K)$ then $(v_1-v_2,v_1+v_2)$ is
an eigenvector corresponding to the same eigenvalue.
\end{remark}

\subsection{Degree for \mbox{$\SO(2)$-equi}variant gradient
maps}{\label{degree}}

Proofs of global bifurcation theorems in this paper exploit the
topological degree for \mbox{$\SO(2)$-equi}variant gradient
mappings, which is a~special case of the degree
described in~\cite{Ry}. For earlier results concerning equivariant
degree see~\cite{D,IMV89,DGJM,IMV92,GKW} and references therein.

Consider an orthogonal representation of the group $\SO(2)$ on
a~real inner product space $V$ with $\dim V<\infty.$ Let
$\Omega$ be an~\mbox{$\SO(2)$-in}variant bounded open
subset of $V$ and let  $\nabla f\colon V\rightarrow V$
be a~continuous \mbox{$\SO(2)$-equi}variant gradient
mapping such that
$\nabla f(x)\neq 0$ for every $x\in \partial \Omega.$
Then
\begin{equation*}
\displaystyle\DEG(\nabla f,\Omega)
=\set{\DEG_j(\nabla f,\Omega)}_{j\in\N\cup\set{0}}
\end{equation*}
denotes the \mbox{$\SO(2)$-degree} of $\nabla f$
in $\Omega$~\cite{Ry}.
It is an element of the Euler ring of the group
$\SO(2),$ i.e. the ring
\begin{equation*}
U(\SO(2))=\bigoplus_{j\in\N\cup\set{0}}\K_j
\end{equation*}
with addition $+$ and multiplication $\star$ defined for every
 $\set{a_j}_{j=0}^{\infty},\set{b_j}_{j=0}^{\infty}\in U(\SO(2))$
by
 $\set{a_j}_{j=0}^{\infty}+\set{b_j}_{j=0}^{\infty}
 =\set{a_j+b_j}_{j=0}^{\infty}$
and
 $\set{a_j}_{j=0}^{\infty}\star\set{b_j}_{j=0}^{\infty}
 =\set{c_j}_{j=0}^{\infty},$
where $c_0=a_0b_0,$ $c_j=a_0b_j+a_jb_0,$ $j\in\N.$ Notice that
$\Theta=(0,0,\ldots)$ is the neutral element of addition in this
ring. The degree $\DEG$ has properties analogous to the Brouwer
degree (see~\cite{Ry}). However, if $\DEG_j(\nabla f,\Omega)\neq 0$
for some $j\in\N\cup\set{0}$
then
 $(\nabla f)^{-1}(\set{0})\cap\Omega^{\K_j}\neq \emptyset$
(not only
 $(\nabla f)^{-1}(\set{0})\cap\Omega\neq \emptyset$).

If $y_0\in\R^m$ is an isolated zero of a~continuous mapping
$g\colon\R^m\to\R^m$ then the topological index $\ind{g}{y_0}$
of $y_0$ with respect to $g$ is defined as the Brouwer degree
$\deg(g,B(y_0,r),0)$ of $g$ in a~ball $B(y_0,r)\subset\R^m$ centred
at $y_0$ with radius $r>0$ such that
$g^{-1}(\set{0})\cap\cl{B(y_0,r)}=\set{y_0}.$

Analogously, if $x_0$ is an isolated element of
$(\nabla f)^{-1}(\set{0})$ then its index
\begin{equation*}
\I(\nabla f,x_0)
=\set{\I_j(\nabla f,x_0)}_{j\in\N\cup\set{0}}\in U(\SO(2))
\end{equation*}
with respect to $\nabla f$ is defined by the formula
\begin{equation*}
\I(\nabla f,x_0)=\DEG(\nabla f,B(x_0,r)),
\end{equation*}
where $B(x_0,r)\subset V$ is a~ball such that
 $(\nabla f)^{-1}(\set{0})\cap \cl{B(x_0,r)}=\set{x_0}.$

\begin{lemma}[\cite{DR}]{\label{abstrlem}}
Let $V=\R[m,0]\oplus\R[m_1,j_1]\oplus\cdots\oplus\R[m_r,j_r],$ where
$m,m_i,j_i\in\N,$ $0<j_1<\cdots<j_r.$ Assume that $f\in C^2(V,\R)$
is an \mbox{$\SO(2)$-equi}variant map and $x_0$ is an isolated
element of $(\nabla f)^{-1}(\set{0})$ such that
$\nabla^2f(x_0)=\diag{A_0,A_1,\ldots,A_r}$ for some matrix $A_0$ of
dimension $m$ and for nonsingular matrices $A_i$ of dimensions
$2m_i,$ $i=1,\ldots,r.$ Then
\begin{equation*}
\I_0(\nabla f,x_0)=\ind{\nabla f^{\SO(2)}}{x_0}
\end{equation*}
and for every $j\in\N$ one has
\begin{equation*}
\I_j(\nabla f,x_0)=
\begin{cases}
\displaystyle \ind{\nabla f^{\SO(2)}}{x_0} \cdot\frac{\morse{A_i}}{2}&
\text{if $j=j_i$ for some $i\in\set{1,\ldots,r}$}, \\
0 & \text{otherwise}.
\end{cases}
\end{equation*}
\end{lemma}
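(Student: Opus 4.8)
The plan is to treat the $0$-th coordinate of the $\SO(2)$-degree separately, since it is by construction a Brouwer degree on the fixed-point set, and to obtain the remaining coordinates by splitting $f$ into a product near $x_0$ and evaluating a $\star$-product in the Euler ring $U(\SO(2))$. Throughout I assume $x_0=0$ and write $V=W\oplus W^{\perp}$ with $W=V^{\SO(2)}=\R[m,0]$ and $W^{\perp}=\R[m_1,j_1]\oplus\cdots\oplus\R[m_r,j_r]$, a point being $(w,w')$.

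For the $0$-th coordinate I would invoke the construction of $\DEG$ in~\cite{Ry}: its $0$-th component is the Brouwer degree of the restriction $(\nabla f)^{\SO(2)}$ to the fixed-point set $W=V^{\SO(2)}$ (this is also visible from the detection property $\DEG_j\neq 0\Rightarrow(\nabla f)^{-1}(\set{0})\cap\Omega^{\K_j}\neq\emptyset$ with $\K_0=\SO(2)$). Hence $\I_0(\nabla f,x_0)=\ind{(\nabla f)^{\SO(2)}}{0}=\ind{\nabla f^{\SO(2)}}{0}$, which is the first assertion.

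For the remaining coordinates I would first reduce $f$ to product form. Since $\nabla^2 f(0)$ is block-diagonal, its mixed block vanishes and $A'\eqdf\diag{A_1,\ldots,A_r}=\nabla^2 f(0)|_{W^{\perp}}$ is nonsingular, so by the equivariant splitting lemma there is an $\SO(2)$-equivariant local change of coordinates after which $f(w,w')=\hat f(w)+\tfrac12\inn{A'w'}{w'}$ near $0$, with $\hat f$ a reduced $\SO(2)$-invariant function on $W$. The $\SO(2)$-index of an isolated critical point is invariant under such equivariant diffeomorphisms, and $\nabla(\hat f\oplus Q)=\nabla\hat f\times A_1\times\cdots\times A_r$ is a product; applying the multiplicativity of $\DEG$~\cite{Ry} gives
\[ \I(\nabla f,0)=\I(\nabla\hat f,0)\star\I(A_1,0)\star\cdots\star\I(A_r,0). \]
As $W$ is a trivial representation, $\I(\nabla\hat f,0)=(d,0,0,\ldots)$ with $d=\ind{\nabla\hat f}{0}$; comparing $0$-th components with the previous paragraph forces $d=\ind{\nabla f^{\SO(2)}}{0}$, so I need not identify $\hat f$ with $f^{\SO(2)}$ directly.

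It remains to compute each $\I(A_i,0)$. Being equivariant and self-adjoint on $\R[m_i,j_i]$, $A_i$ commutes with $\rho_{j_i}(\SO(2))$ and therefore acts as a nonzero real scalar on each irreducible summand $\R[1,j_i]$; homotoping through equivariant isomorphisms brings it to $\diag{\varepsilon_1 I_2,\ldots,\varepsilon_{m_i}I_2}$ with $\varepsilon_l=\pm1$, the number of $-1$'s equal to $\morse{A_i}/2$ (each negative scalar yields a $2$-dimensional negative eigenspace). With the normalizations $\DEG(I_2,B)=(1,0,0,\ldots)$ and $\DEG(-I_2,B)=(1,0,\ldots,1,\ldots)$ carrying its second nonzero entry in the $j_i$-th place~\cite{Ry}, multiplicativity gives $\I(A_i,0)=(1,0,\ldots,\morse{A_i}/2,\ldots)$ with the nonzero entry in the $j_i$-th place. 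Substituting into the displayed $\star$-product and using $c_0=a_0b_0$, $c_j=a_0b_j+a_jb_0$ leaves $\I_{j_i}=\ind{\nabla f^{\SO(2)}}{0}\cdot\morse{A_i}/2$ and $\I_j=0$ for $j\notin\set{j_1,\ldots,j_r}$, completing the argument. The hard part is the splitting reduction together with the basic linear degree $\DEG(-I_2,B)$ on $\R[1,j]$; both are supplied by the construction of the $\SO(2)$-gradient degree in~\cite{Ry} and standard equivariant Morse theory.
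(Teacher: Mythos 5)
First, a point of reference: the paper itself contains no proof of this lemma --- it is imported verbatim from Dancer--Rybicki~\cite{DR} (the label ``[\cite{DR}]'' in the statement), so there is no in-paper argument to compare yours against. Judged on its own merits, your proof follows what is in fact the standard route for this result: isolate the $0$th coordinate as the Brouwer index of $\nabla f^{\SO(2)}$ on the fixed-point space (this is indeed built into the construction of $\DEG$ in~\cite{Ry}; your parenthetical appeal to the detection property is not a derivation, but the main claim is right), reduce to a product map, apply the product property of $\DEG$, and evaluate the linear indices on the isotypic components. Your linear-algebra step is sound: symmetric equivariant automorphisms of $\R[m_i,j_i]$ correspond to Hermitian matrices in $M_{m_i}(\C)$, the unitary path to a real diagonal, then a scaling to $\diag{\pm 1}$, stays inside nonsingular self-adjoint equivariant maps (an admissible gradient homotopy), and the number of $-1$-blocks is $\morse{A_i}/2$ because each negative eigenvalue has even real multiplicity. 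With the normalization $\DEG_{j}(-I_2,B)=1$ on $\R[1,j]$ from~\cite{Ry} and the truncated multiplication $c_0=a_0b_0,$ $c_j=a_0b_j+a_jb_0,$ the $\star$-product bookkeeping in your last paragraph is correct and yields exactly the stated formulas.

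The one step I would not let stand as written is the reduction to product form via ``the equivariant splitting lemma'' plus ``invariance of the index under equivariant diffeomorphisms.'' For $f$ merely of class $C^2$ the Gromoll--Meyer change of coordinates is in general only a homeomorphism, and invariance of the $\SO(2)$-gradient degree under conjugation by a change of coordinates is not among its listed properties --- the admissible deformations are gradient homotopies, and conjugation does not obviously preserve that class. Both difficulties disappear if you replace the splitting lemma by an explicit gradient homotopy. Note first that equivariance alone gives $\nabla f(w,0)\in V^{\SO(2)},$ i.e. $\nabla_{w'}f(w,0)=0$ for all small $w$; hence, by the implicit function theorem (invertibility of $A'=\diag{A_1,\ldots,A_r}$), all zeros of $\nabla f$ near $x_0$ lie in $V^{\SO(2)}\times\set{0},$ the reduced function is simply $f^{\SO(2)}$ (so your detour comparing $0$th coordinates to identify $d$ is unnecessary), and $x_0$ is an isolated zero of $\nabla f^{\SO(2)},$ so the right-hand sides are well defined. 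Then $f_t(w,w')\eqdf f(w,(1-t)w')+\frac{t}{2}\inn{A'w'}{w'}$ is an invariant $C^2$ homotopy; writing $\nabla_{w'}f(w,u)=M(w,u)u$ with $M(w,u)\to A'$ near the origin, one gets $\nabla_{w'}f_t(w,w')=\left[(1-t)^2M(w,(1-t)w')+tA'\right]w',$ and since $(1-t)^2+t\geq\frac{3}{4}$ the bracket is invertible on a small ball uniformly in $t,$ so no zeros with $w'\neq 0$ appear during the deformation. At $t=1$ the gradient is the product map $\nabla f^{\SO(2)}\times A_1\times\cdots\times A_r,$ and homotopy invariance plus the product property of $\DEG$ finish the proof exactly as in your final paragraph. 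With that repair your argument is complete and agrees with the computation in the cited source.
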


Assume that $\nabla_x f\colon V\times\R\rightarrow V$ is
a~continuous \mbox{$\SO(2)$-equi}variant gradient (with respect to
$V$) mapping such that $\nabla_x f(x_0,\lambda)=0$ for some fixed
$x_0\in V$ and all $\lambda\in\R.$ Fix $\lambda_0\in\R$ and assume
that for every $\lambda\in\R,$ $\lambda\neq\lambda_0,$ from
a~neighbourhood of $\lambda_0$ there exists a~neighbourhood
$W\subset V\times\R$ of $(x_0,\lambda )$ such that
 $(\nabla_x f)^{-1}(\set{0})\cap W\subset\set{x_0}\times\R.$
Then for sufficiently small
$\varepsilon>0$ one can define the \emph{bifurcation index}
\begin{equation*}
\IND(x_0,\lambda_0)
=\set{\IND_j(x_0,\lambda_0)}_{j\in\N\cup\set{0}}\in U(\SO(2))
\end{equation*}
\emph{of $(x_0,\lambda_0)$ with respect to $\nabla_x f$} by
\begin{equation}{\label{abstrind}}
\IND(x_0,\lambda_0)
=\I(\nabla_x f(\cdot,\lambda_0+\varepsilon),x_0)
-\I(\nabla_x f(\cdot,\lambda_0-\varepsilon),x_0).
\end{equation}
This bifurcation index will be used in the proof of
Theorem~\ref{genbif}.

\subsection{Functional setting}

For given Hilbert spaces $Y,$ $E,$ $Z$ the symbols
$C^{1,0}(Y\times E,Z)$ and $C^{2,0}(Y\times E,Z)$
denote the sets of continuous functions from $Y\times E$ to $Z$
having, respectively, first partial Fr\'{e}chet derivative and two first partial Fr\'{e}chet derivatives with respect to $Y$
continuous on $Y\times E.$

Solutions $(x,\lambda)$ of~\eqref{parhamniel} are regarded
as elements of the space $\hilb\times\R^k.$
(The description of the Sobolev space
 $\hilb\equiv W^{1,2}([0,2\pi],\R^{2n})$
can be found in~\cite{MW}.) The inner product in $\hilb$ is
defined for every $x,y\in\hilb$ by the formula
\begin{equation*}
\inn{x}{y}_{\hilb} =\inn{x}{y}_{L^2_{2\pi}}
+ \inn{\dot{x}}{\dot{y}}_{L^2_{2\pi}}
=\int_{0}^{2\pi}\inn{x(t)}{y(t)}\mathrm{d}t
+\int_{0}^{2\pi}\inn{\dot{x}(t)}{\dot{y}(t)}\mathrm{d}t,
\end{equation*}
where $\dot{x}$ stands for the weak derivative of $x$ and
$\inn{\cdot}{\cdot}$ denotes the standard inner product
in $\R^{2n}.$ Since every $x\in\hilb$ has a~continuous
representative (denoted by the same symbol) satisfying
the condition $x(0)=x(2\pi),$ it can be regarded as a~continuous
\mbox{$2\pi$-pe}riodic function on $\R.$

For fixed $\lambda\in\R^k$ a~function $x\in\hilb$ is called
a~\emph{weak solution of~\eqref{parhamniel}} if the equation
 $\dot{x}(t)=J\nabla_x H(x(t),\lambda)$
(where $\dot{x}$ denotes the weak derivative of
$x$) is satisfied for almost all $t\in[0,2\pi].$ However, since it
is assumed that $H\in C^{2,0}(\R^{2n}\times\R^k,\R),$ every such
solution is in fact a~classical solution of class $C^2$ on
$[0,2\pi]$ and it has a~unique extension to the classical solution
on $\R,$ which is a~\mbox{$2\pi$-pe}riodic function of class $C^2.$

Let $Y,$ $Z$ be Hilbert spaces. Consider a~map
$F\colon Y\times\R^k\rightarrow Z$ and a~fixed set $\Delta\subset Y$
such that $F(x,\lambda)=0$ for all $x\in \Delta,$
$\lambda\in\R^k.$ The set  $\Delta\times\R^k$ is referred to as
the set of \emph{trivial solutions} of the equation
\begin{equation}{\label{genbifpr}}
F(x,\lambda)=0.
\end{equation}
The complement of $\Delta\times\R^k$ in the set of all solutions
of~\eqref{genbifpr} in $Y\times\R^k$ is called the set of
\emph{nontrivial solutions}.

\begin{definition}{\label{genbifdef}}
Let $X\subset Y\times\R^k$ be a~subset of the set of nontrivial
solutions of~\eqref{genbifpr}. A~solution
$(x_0,\lambda_0)\in\Delta\times\R^k$ is called a~\emph{bifurcation
point of solutions from $X$} if it is a~cluster point of $X.$ It
is called a~\emph{branching point of solutions from $X$} if there
exists a~connected set $C\subset X$ such that
$(x_0,\lambda_0)\in\cl{C}$ (the closure in $Y\times\R^k$).
If the connected component of $\cl{X}$ containing the
bifurcation point $(x_0,\lambda_0)$ is unbounded or it contains
another bifurcation point of solutions from $X$ then
$(x_0,\lambda_0)$ is said to be a~\emph{global bifurcation point
of solutions from $X$}.
\end{definition}

Assuming $Y=\hilb,$ $Z=L^2_{2\pi},$ and defining
$F\colon \hilb\times\R^k\rightarrow L^2_{2\pi}$ by
\begin{equation*}
F(x,\lambda)(t)=\dot{x}(t)-J\nabla_xH(x(t),\lambda)
\end{equation*}
one can write~\eqref{parhamniel} in form~\eqref{genbifpr}, therefore
Definition~\ref{genbifdef} can be applied. If $(x,\lambda)$ is
a~\emph{stationary solution} of~\eqref{parhamniel}, i.e. $x$ is
constant, then $x$ is regarded as an element of $\R^{2n}.$ For fixed
$x_0\in\R^{2n}$ such that
\begin{equation}{\label{stationary}}
\nabla_xH(x_0,\lambda)=0 \;\;\;\text{for all $\lambda\in\R^k$}
\end{equation}
one can assume $\Delta=\set{x_0}.$ In such a~case the set
$\set{x_0}\times\R^k$ of trivial solutions of~\eqref{parhamniel}
is denoted by $\cT(x_0)$ and the symbol $\cNT(x_0)$ stands for
the set of nontrivial solutions of~\eqref{parhamniel}. Notice that
$\cNT(x_0)$ can contain stationary solutions.

Define the action of $\SO(2)$ on $\hilb$ as follows. For every
$x\in\hilb$ and
\begin{equation}{\label{gform}}
g= \left[
\begin{array}{cc}
\cos\phi&-\sin\phi\\
\sin\phi&\cos\phi
\end{array}
\right]\in\SO(2),\;\;\;0\leq\phi<2\pi,
\end{equation}
set
\begin{equation*}
(gx)(t)=x(t+\phi).
\end{equation*}
The space $\hilb\times\R^k$ is regarded as the direct sum of the
orthogonal representation of $\SO(2)$ on $\hilb$ defined above and
the identity representation of $\SO(2)$ on $\R^k.$ One has
$\SO(2)_{(x,\lambda)}=\SO(2)_x$ for every
$(x,\lambda)\in\hilb\times\R^k.$

The subspaces of $\hilb$ defined as
\begin{align*}
E_0 & \eqdf
\set{x\in\hilb\cond x(t)\equiv a, a\in\R^{2n}},\\
E_j & \eqdf  \set{x\in\hilb\cond x(t) \equiv a\cos jt
+b\sin jt,\; a,b\in\R^{2n}},\;\;\;j\in\N,
\end{align*}
are \mbox{$\SO(2)$-equi}variant. One has $E_0\approx\R[2n,0]$ and
$E_j\approx\R[2n,j]$ for $j\in\N.$ Obviously,
$(\hilb)^{\SO(2)}=(\hilb)_{\SO(2)}=E_0$ and if $j\in\N,$
$v\in E_j\backslash\set{0},$ then $\SO(2)_v=\Z_j.$ Furthermore,
\begin{equation*}
(\hilb)^{\Z_j}=\bigoplus_{l\in\N\cup\set{0}}E_{lj}
\approx \bigoplus_{l\in\N\cup\set{0}}\R[2n,lj].
\end{equation*}

Let $(e_1,\ldots,e_{2n})$ be the standard basis in $\R^{2n}.$
For fixed $j\in\N$ set
$\varphi_0(t)\equiv 1,$ $\varphi_j(t)\equiv\cos jt,$ $\psi_j(t)\equiv\sin jt,$
and
\begin{equation*}
\hat{e}_i=
\begin{cases}
e_i\varphi_j &;1\leq i\leq 2n,\\
e_{i-2n}\psi_j&;2n+1\leq i\leq 4n.
\end{cases}
\end{equation*}
Then $(e_1\varphi_0,\ldots,e_{2n}\varphi_0)$ and
$(\hat{e}_1,\ldots,\hat{e}_{4n})$ are called the \emph{standard
bases in $E_0$} and \emph{$E_j$}, respectively. The standard basis
in $E_{j_1}\oplus\cdots\oplus E_{j_s},$ where
$j_1,\ldots,j_s\in\N\cup\set{0},$ $s\in\N,$ is
built from the standard bases in $E_{j_1},\ldots,E_{j_s}.$

\begin{remark}{\label{mod}}
There exists $c>0$ such that for every $x\in\hilb$ (identified with
its continuous representative) one has
$\norm{x}_0\leq c\norm{x}_{\hilb},$
where
 $\displaystyle \norm{x}_0 =\sup_{t\in[0,2\pi]}\abs{x(t)}$
(see~\cite{MW}). As it was observed in~\cite{DR}, for given
$H\in C^{2,0}(\R^{2n}\times\R^k,\R)$ and
bounded $U\subset\hilb\times\R^k$ one can find $\eta>0$ and
$H_1\in C^{2,0}(\R^{2n}\times\R^k,\R)$ such that
\begin{enumerate}
\item for every $(x,\lambda)\in\cl{U},$
      $t\in[0,2\pi]$ one has
      $(x(t),\lambda)\in
      B(0,\eta)\subset\R^{2n}\times\R^k,$
\item $\displaystyle H_1|_{B(0,\eta)}=
      H|_{B(0,\eta)},\;\;\;
      H_1|_{\R^{2n}\times\R^k\backslash B(0,2\eta)}=0,$
\item $(x,\lambda)\in\cl{U}$ is a~solution of~\eqref{parhamniel}
      iff it is a~solution of the problem
      \begin{equation}{\label{parhammod}}
      \left\{
      \begin{aligned}
      \dot{x}(t) &=J\nabla_x H_1(x(t),\lambda)\\
      x(0) &=x(2\pi).
      \end{aligned}
      \right.
      \end{equation}
\end{enumerate}
Consequently, investigating bounded (in $\hilb\times\R^k$) subsets
of solutions of~\eqref{parhamniel} one can replace $H$ by
a~modified Hamiltonian $H_1$ having compact support, therefore
no growth conditions are needed.
\end{remark}

Theorem~\ref{finitedim} given below has been extracted from
the proof of Theorem~3.3 in~\cite{DR}. It is a~version
of the Amann-Zehnder global reduction~\cite{AZ1,AZ2}.
Every point $x_0\in\R^{2n}$ is identified
with the constant function from $E_0\subset\hilb.$
The gradients $\nabla_xa(\cdot,\lambda),$
$\nabla_xH_1(\cdot,\lambda)$ and the Hessian matrices
$\nabla_x^2 a(x_0,\lambda),$ $\nabla_x^2H_1(x_0,\lambda)$
are computed with respect to the inner product
$\langle\cdot,\cdot\rangle_{\hilb}$
and the standard inner product in $\R^{2n},$ respectively.
Use is made of the standard bases in $E_f$ and $\R^{2n}.$

\begin{theorem}{\label{finitedim}}
If $H_1\in C^{2,0}(\R^{2n}\times\R^k,\R)$ has compact support
then there exist $r_0\in\N$ and an \mbox{$\SO(2)$-equi}variant
mapping $a\in C^{2,0}(E_f\times\R^k,\R),$ where
\begin{equation*}
E_f \eqdf \bigoplus_{j=0}^{r_0}E_j
\approx\bigoplus_{j=0}^{r_0}\R[2n,j],
\end{equation*}
such that for every $x_0\in\R^{2n},$ $\lambda\in\R^k$ the
following conditions are satisfied.
\begin{enumerate}

\item $\displaystyle a(x_0,\lambda)=-2\pi H_1(x_0,\lambda).$

\item $\nabla_x a(x_0,\lambda)=0$ iff
      $\nabla_xH_1(x_0,\lambda)=0.$
      Moreover,
      $\nabla_x a^{\SO(2)} =\nabla_x a|_{(E_0\times\R^k,E_0)}
      =-\nabla_x H_1.$

\item If $\nabla_x a(x_0,\lambda)=0$ then
      \begin{equation*}
      \nabla_x^2 a(x_0,\lambda)=
      \diag{Q_0(\nabla_x^2H_1(x_0,\lambda)),
      \ldots,Q_{r_0}(\nabla_x^2H_1(x_0,\lambda))}.
      \end{equation*}

\item{\label{amzlemmor}} For every $j>r_0$ one has
      $\morse{Q_j(\nabla_x^2H_1(x_0,\lambda))}
      =m^+\left(Q_j(\nabla_x^2H_1(x_0,\lambda))\right)=2n$
      (in particular,
      $\det Q_j(\nabla_x^2H_1(x_0,\lambda))\neq 0$).

\end{enumerate}
Furthermore, there exists an \mbox{$\SO(2)$-equi}variant
homeomorphism
 $h\colon (\nabla_x a)^{-1}(\set{0})\rightarrow \cR(H_1),$
where $\cR(H_1)\subset\hilb\times\R^k$ is the set of solutions
of~\eqref{parhammod}.
\end{theorem}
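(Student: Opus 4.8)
The plan is to recover the $2\pi$-periodic solutions of~\eqref{parhammod} as the critical points, in the $x$-variable, of the action functional $\Phi\colon\hilb\times\R^k\to\R$,
\begin{equation*}
\Phi(x,\lambda)=\int_0^{2\pi}\left[\tfrac12\inn{-J\dot x(t)}{x(t)}-H_1(x(t),\lambda)\right]\mathrm{d}t,
\end{equation*}
and then to perform an Amann--Zehnder saddle-point reduction onto the finite-dimensional space $E_f$. Since $H_1$ has compact support, $\Phi$ is well defined and of class $C^{2,0}$, and integrating by parts (using $2\pi$-periodicity and $J^{-1}=J^t$) shows $D_x\Phi(x,\lambda)h=\int_0^{2\pi}\inn{-J\dot x-\nabla_xH_1(x,\lambda)}{h}\,\mathrm{d}t$; hence $\nabla_x\Phi(\cdot,\lambda)=0$ exactly at the weak---equivalently, by Remark~\ref{mod}, classical---solutions of~\eqref{parhammod}. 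The operator $x\mapsto-J\dot x$ preserves each $E_j$ and, in the coordinates $x=a\cos jt+b\sin jt$, acts there by $\begin{bmatrix}0&-jJ\\jJ&0\end{bmatrix}$, while passing from the $L^2_{2\pi}$- to the $\hilb$-gradient rescales the $E_j$-block by $\tfrac1{1+j^2}$ (as $\inn{x}{y}_{\hilb}=(1+j^2)\inn{x}{y}_{L^2_{2\pi}}$ on $E_j$). Consequently, at any constant $x_0$ the Hessian $\nabla_x^2\Phi(x_0,\lambda)$ is block diagonal for $\hilb=\bigoplus_jE_j$, its $E_j$-block being exactly $Q_j(\nabla_x^2H_1(x_0,\lambda))$ of~\eqref{oznQ} (and $Q_0=-\nabla_x^2H_1$, since $\dot x_0=0$ and the two inner products agree on $E_0$).

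Next I would fix the orthogonal splitting $\hilb=E_f\oplus E_f^{\perp}$, $E_f^{\perp}=\bigoplus_{j>r_0}E_j$, with projections $P,P^{\perp}$, and choose $r_0>\sup\norm{\nabla_x^2H_1}$, which is finite by compact support. For $j>r_0$ the symplectic block has eigenvalues $\pm j$ of multiplicity $2n$, so Weyl's inequality shows the $4n\times4n$ matrix $Q_j(\nabla_x^2H_1(x_0,\lambda))$ has exactly $2n$ positive and $2n$ negative eigenvalues; this is property~(4) and, simultaneously, shows $P^{\perp}\nabla_x^2\Phi P^{\perp}$ is uniformly invertible. Writing $x=u+w$ with $u\in E_f$, $w\in E_f^{\perp}$, I would solve the projected equation $P^{\perp}\nabla_x\Phi(u+w,\lambda)=0$ for $w$: because $\nabla_xH_1$ is globally Lipschitz and the inverse of $P^{\perp}\nabla_x^2\Phi P^{\perp}$ has norm $O(1/r_0)$, enlarging $r_0$ makes the associated fixed-point map a uniform contraction, giving a unique global $w=w(u,\lambda)$. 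The uniform contraction principle with parameter yields $w\in C^{2,0}$, and, since $\Phi$, $-J\tfrac{d}{dt}$, and $P^{\perp}$ commute with the $\SO(2)$-action, $w$ is $\SO(2)$-equivariant. I then set $a(u,\lambda)=\Phi(u+w(u,\lambda),\lambda)$, which is $\SO(2)$-equivariant and of class $C^{2,0}$.

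It remains to read off (1)--(3) and the homeomorphism. For constant $x_0\in E_0$ one has $-J\dot x_0=0$ and the $L^2_{2\pi}$-gradient of $x\mapsto\int_0^{2\pi}H_1(x,\lambda)$ at $x_0$ is the constant function $t\mapsto\nabla_xH_1(x_0,\lambda)\in E_0$, so $P^{\perp}\nabla_x\Phi(x_0,\lambda)=0$ and, by uniqueness, $w(x_0,\lambda)=0$; this gives~(1). The reduction identity $\nabla_xa(u,\lambda)=P\,\nabla_x\Phi(u+w(u,\lambda),\lambda)$, evaluated at $u=x_0$, gives $\nabla_xa(x_0,\lambda)=-\nabla_xH_1(x_0,\lambda)$ in $E_0\subset E_f$, which is~(2) together with $\nabla_xa^{\SO(2)}=-\nabla_xH_1$. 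Differentiating that identity yields the Schur-complement Hessian
\begin{equation*}
\nabla_x^2a=P\nabla_x^2\Phi P-P\nabla_x^2\Phi P^{\perp}\bigl(P^{\perp}\nabla_x^2\Phi P^{\perp}\bigr)^{-1}P^{\perp}\nabla_x^2\Phi P;
\end{equation*}
at a constant the cross blocks $P\nabla_x^2\Phi P^{\perp}$ vanish by the block-diagonality above, the correction drops, and $\nabla_x^2a(x_0,\lambda)=\diag{Q_0,\ldots,Q_{r_0}}$, which is~(3). Finally $h(u,\lambda)=(u+w(u,\lambda),\lambda)$ is the desired $\SO(2)$-equivariant homeomorphism onto $\cR(H_1)$, with inverse $(x,\lambda)\mapsto(Px,\lambda)$, the equivalence of zeros being exactly Lyapunov--Schmidt reduction.

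The main obstacle is the global solvability and regularity of $w$. Unlike a local implicit-function argument, $w$ must be defined on all of $E_f\times\R^k$; this is what forces $r_0$ large enough that the high-frequency Hessian dominates the globally bounded $\nabla_x^2H_1$, and it relies essentially on $H_1$ having compact support. Because $H_1$ is only $C^{2,0}$---twice differentiable in $x$ but merely continuous in $\lambda$---the ordinary implicit function theorem does not apply, and one must use the uniform contraction principle with parameter to secure joint continuity in $\lambda$ while keeping $C^2$ dependence on $u$; checking that this transfers the $C^{2,0}$ regularity and the equivariance from $\Phi$ to $w$, hence to $a$, is the delicate part. The block structure, the signature count in~(4), and the vanishing of $w$ and of the cross-Hessian blocks at constants are then direct computations.
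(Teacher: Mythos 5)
Your overall route is the same Amann--Zehnder saddle-point reduction that the paper itself invokes (it does not reprove the statement but extracts it from the proof of Theorem~3.3 in~\cite{DR}), and most of your verifications are sound: the signature count giving conclusion~(4), the block-diagonality of the Hessian at constant functions, the vanishing of $w$ at constants giving (1)--(2), and the Schur-complement computation for~(3). However, the core quantitative step of your contraction argument fails as stated. You set up the reduction with the $\hilb$-gradient, so the $E_j$-block of the linearization carries the factor $\frac{1}{1+j^2}$; its eigenvalues then lie within $\norm{\nabla_x^2H_1}/(1+j^2)$ of $\pm\frac{j}{1+j^2}$, which tend to $0$ as $j\to\infty$. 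Hence the inverse of the $j$th block has norm growing like $j$, and $P^{\perp}\nabla_x^2\Phi P^{\perp}$ has \emph{no} bounded inverse on $E_f^{\perp}$ at all: as the paper notes in the proof of Lemma~\ref{compbranch}, the Hessian of the action functional is compact (not Fredholm) on $\hilb\times\R^k$, and a compact operator on an infinite-dimensional space cannot be uniformly invertible. Your claimed $O(1/r_0)$ bound is the $L^2_{2\pi}$ bound, but you apply it in the $\hilb$ metric in which you computed the blocks $Q_j$ of~\eqref{oznQ}; the two metrics have been conflated, so the fixed-point map is not a contraction in your setting and the global $w(u,\lambda)$ is not obtained.

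The repair is standard and is exactly what the paper flags in the discussion following the theorem: carry out the Lyapunov--Schmidt/contraction step with the $L^2_{2\pi}$ inner product, as in~\cite{AZ1,AZ2}, where $A=-J\frac{d}{dt}$ restricted to the modes $j>r_0$ has inverse of norm at most $1/r_0$ and moreover regularizes ($A^{-1}$ maps $L^2_{2\pi}$ into $\hilb$), so that $w(u,\lambda)$ lands in $\hilb$ and the fixed-point map is a uniform contraction once $r_0>\sup\norm{\nabla_x^2H_1}$. The factor $\frac{1}{1+j^2}$ then enters only at the end, when the finite-dimensional Hessian blocks are recomputed with respect to $\langle\cdot,\cdot\rangle_{\hilb}$ (the matrices of~\cite{DR} are those of~\cite{AZ1,AZ2} without this factor), and Lemma~\ref{homsol} is precisely what allows the homeomorphism $h$ constructed in the $L^2_{2\pi}$ topology to be transferred to the $\hilb\times\R^k$ topology of $\cR(H_1)$ --- this is the role that lemma plays in the paper, and your proposal never uses it. A secondary point: the uniform contraction principle yields $w$ of class $C^{1,0}$, not $C^{2,0}$ (the latter would require $H_1\in C^{3,0}$); this weaker regularity suffices, since $a\in C^{2,0}$ follows from the reduction identity $\nabla_x a(u,\lambda)=P\nabla_x\Phi(u+w(u,\lambda),\lambda)$ combined with $P^{\perp}\nabla_x\Phi(u+w(u,\lambda),\lambda)=0$, which makes the derivative of $w$ drop out of $\nabla_x a$.
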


Conclusion~\eqref{amzlemmor} in the above theorem holds true for
every $x_0\in\R^{2n}$ and $\lambda\in\R^k,$ since it is assumed
that $H_1$ has compact support. The fact that $h$ is
a~homeomorphism follows from its construction. Notice that the
authors of~\cite{AZ1,AZ2} regard the space $\hilb$ as a~subspace
of $L^2_{2\pi}\equiv L^2([0,2\pi],\R^{2n})$ and they use the inner
product
$\langle\cdot,\cdot\rangle_{L^2_{2\pi}}$
which generates weaker topology in $\hilb$ than the inner product
$\langle\cdot,\cdot\rangle_{\hilb}.$ It affects also the form of
matrices $Q_j$ and changes their eigenvalues used in the reduction.
The matrices used in~\cite{DR} are in fact those from~\cite{AZ1,AZ2}
(without the factor $\frac{1}{1+j^2}$). However, the change of the
inner product is possible in view of the following lemma.

\begin{lemma}{\label{homsol}}
Assume that $H_1\in C^{2,0}(\R^{2n}\times\R^k,\R)$ has compact
support, $\cR(H_1)$ is the set of solutions of~\eqref{parhammod},
and $d_{L^2_{2\pi}},$ $d_{\hilb}$ are the metrics in $\cR(H_1)$
induced by the product norms from $L^2_{2\pi}\times\R^k$ and
$\hilb\times\R^k,$ respectively. Then the identity mapping from
$(\cR(H_1),d_{L^2_{2\pi}})$ to $(\cR(H_1),d_{\hilb})$ is
a~homeomorphism.
\end{lemma}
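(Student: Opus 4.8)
The plan is to prove that the identity is continuous in both directions; since it is a bijection of the set $\cR(H_1)$ onto itself, this makes it a homeomorphism. One direction is essentially free. From the definition of the inner product one has $\norm{x}_{L^2_{2\pi}}\leq\norm{x}_{\hilb}$, so the product norm from $L^2_{2\pi}\times\R^k$ is dominated by the product norm from $\hilb\times\R^k$; hence the identity from $(\cR(H_1),d_{\hilb})$ to $(\cR(H_1),d_{L^2_{2\pi}})$ is $1$-Lipschitz and therefore continuous. All the work lies in the opposite direction: I must show that $L^2_{2\pi}\times\R^k$-convergence of solutions forces convergence in $\hilb\times\R^k$.

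I would argue sequentially. Suppose $(x_m,\lambda_m),(x,\lambda)\in\cR(H_1)$ satisfy $x_m\to x$ in $L^2_{2\pi}$ and $\lambda_m\to\lambda$ in $\R^k$. Since $\norm{x_m-x}_{\hilb}^2=\norm{x_m-x}_{L^2_{2\pi}}^2+\norm{\dot{x}_m-\dot{x}}_{L^2_{2\pi}}^2$ and the first term already tends to $0$, it suffices to prove $\dot{x}_m\to\dot{x}$ in $L^2_{2\pi}$. Because each $(x_m,\lambda_m)$ solves~\eqref{parhammod} and $J$ is orthogonal, $\dot{x}_m=J\nabla_x H_1(x_m,\lambda_m)$, so the task reduces to controlling $\nabla_x H_1(x_m(\cdot),\lambda_m)-\nabla_x H_1(x(\cdot),\lambda)$ in $L^2_{2\pi}$.

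The crux of the proof is that compact support of $H_1$ (Remark~\ref{mod}) makes $\nabla_x H_1$ bounded, say by $M$, and uniformly continuous. Boundedness gives $\abs{\dot{x}_m(t)}=\abs{\nabla_x H_1(x_m(t),\lambda_m)}\leq M$ for all $t$ and $m$, so the family $\set{x_m}$ is equi-Lipschitz, hence equicontinuous; combined with a uniform sup-bound (the $L^2_{2\pi}$-bound coming from $x_m\to x$ pins the mean, and the Lipschitz bound then controls the oscillation) this lets me invoke Arzel\`{a}--Ascoli. The hard part is precisely this passage from the weak $L^2_{2\pi}$-information to the uniform information needed to exploit continuity of $\nabla_x H_1$; the uniform derivative bound supplied by compact support is exactly what buys it.

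To finish I would run a subsequence-and-contradiction argument. If $x_m\not\to x$ in $\hilb$, pass to a subsequence with $\norm{x_m-x}_{\hilb}\geq\varepsilon$; by Arzel\`{a}--Ascoli extract a further subsequence converging uniformly to a continuous limit, which must coincide with $x$ (uniform convergence implies $L^2_{2\pi}$-convergence, and the $L^2_{2\pi}$-limit is $x$). Uniform convergence $x_{m_l}\to x$ together with $\lambda_{m_l}\to\lambda$ and uniform continuity of $\nabla_x H_1$ then yield $\dot{x}_{m_l}=J\nabla_x H_1(x_{m_l},\lambda_{m_l})\to J\nabla_x H_1(x,\lambda)=\dot{x}$ uniformly, hence in $L^2_{2\pi}$, so $x_{m_l}\to x$ in $\hilb$, contradicting the choice of the subsequence. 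This establishes continuity of the identity from $(\cR(H_1),d_{L^2_{2\pi}})$ to $(\cR(H_1),d_{\hilb})$ and completes the proof.
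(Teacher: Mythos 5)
Your argument is correct, and its skeleton matches the paper's up to the key step: the inequality $\norm{x}_{L^2_{2\pi}}\leq\norm{x}_{\hilb}$ disposes of the easy direction, and the identity $\dot{x}_m=J\nabla_xH_1(x_m(\cdot),\lambda_m)$ reduces the hard direction to showing $\dot{x}_m\to\dot{x}$ in $L^2_{2\pi}.$ From there the two proofs genuinely diverge. The paper finishes in one stroke: continuity and compact support of $J\nabla_xH_1$ give the linear growth bound $\abs{J\nabla_xH_1(y,\alpha)}\leq a+b\abs{y},$ so by a Krasnosiel'skii theorem the superposition operator $(z,\alpha)\mapsto J\nabla_xH_1(z(\cdot),\alpha)$ is continuous from $L^2_{2\pi}\times\R^k$ to $L^2_{2\pi},$ and the conclusion is immediate. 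You instead exploit the full strength of compact support --- a uniform bound $M$ on $\abs{\nabla_xH_1}$ --- to make the solutions equi-Lipschitz, recover a uniform sup-bound from the $L^2_{2\pi}$-bound plus the Lipschitz estimate (your ``pin the mean, control the oscillation'' step is sound: some $t_m$ has $\abs{x_m(t_m)}\leq(2\pi)^{-1/2}\norm{x_m}_{L^2_{2\pi}},$ and then $\abs{x_m(t)}\leq\abs{x_m(t_m)}+2\pi M$), and run Arzel\`{a}--Ascoli with a subsequence-and-contradiction scheme, upgrading $L^2_{2\pi}$-convergence to uniform convergence and concluding via uniform continuity of $\nabla_xH_1.$ Your route is more elementary, avoiding Nemytskii-operator theory, and it actually proves more: on $\cR(H_1)$ the $L^2_{2\pi}\times\R^k$ topology coincides with uniform $C^1$-convergence, not merely with the $\hilb\times\R^k$ topology. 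The trade-offs are that it is longer and leans on pointwise structure of solutions (though even for weak solutions the a.e.\ bound $\abs{\dot{x}_m}\leq M$ suffices, since $W^{1,2}$ functions of one variable are absolutely continuous), whereas Krasnosiel'skii's theorem needs only the growth bound; also note the orthogonality of $J$ is needed only for the norm identity $\abs{J\nabla_xH_1}=\abs{\nabla_xH_1},$ not for the solution identity itself. Both proofs are complete.
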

\begin{proof}
It suffices to prove that the identity mapping from the space
$(\cR(H_1),d_{L^2_{2\pi}})$ to the space $(\cR(H_1),d_{\hilb})$ is
continuous. Suppose that a~sequence
$\set{(x_m,\lambda_m)}_{m\in\N}\subset\cR(H_1)$ is convergent to
some $(x,\lambda)\in\cR(H_1)$ with respect to the metric
$d_{L^2_{2\pi}}.$ It will be shown to be also convergent with
respect to the metric $d_{\hilb}.$ Since
\begin{equation*}
\norm{x_m-x}_{\hilb}^2=
\norm{x_m-x}_{L^2_{2\pi}}+\norm{\dot{x}_{m}-\dot{x}}_{L^2_{2\pi}},
\end{equation*}
it remains to prove that
$\norm{\dot{x}_m-\dot{x}}_{L^2_{2\pi}}\to 0$
as $m\rightarrow\infty.$ The mapping $J\nabla_xH_1$ is continuous
and has compact support, hence there exist $a,b>0$ such that for
all $(y,\alpha)\in\R^{2n}\times\R^k$ the growth condition
\begin{equation*}
\abs{J\nabla_xH_1(y,\alpha)} \leq a + b\abs{y}\equiv a
+b\abs{y}^{\frac{2}{2}}
\end{equation*}
is satisfied. Consequently, by a~Krasnosiel'skii theorem, the
mapping
\begin{equation*}
L^2_{2\pi}\times\R^k\ni (z,\alpha) \mapsto
J\nabla_xH_1(z(\cdot),\alpha)\in L^2_{2\pi}
\end{equation*}
is continuous, hence
\begin{equation*}
\norm{\dot{x}_{m}-\dot{x}}_{L^2_{2\pi}}
=\norm{J\nabla_xH_1(x_m(\cdot),\lambda_m)
-J\nabla_xH_1(x(\cdot),\lambda)}_{L^2_{2\pi}}\rightarrow 0
\end{equation*}
as $m\rightarrow\infty.$
\end{proof}

\begin{lemma}{\label{compbranch}}
If $H_1\in C^{2,0}(\R^{2n}\times\R^k,\R)$ has compact support then
the set $\cR(H_1)$ of solutions of~\eqref{parhammod} is closed in
$\hilb\times\R^k$ and every bounded subset of $\cR(H_1)$ is
relatively compact.
\end{lemma}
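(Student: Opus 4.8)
The plan is to treat the two assertions separately, both relying on the continuity of the superposition operator $(z,\alpha)\mapsto J\nabla_x H_1(z(\cdot),\alpha)$ already exploited in Lemma~\ref{homsol} and on the boundedness of $\nabla_x H_1$ furnished by its compact support, together with the continuous embedding $\norm{\cdot}_0\le c\norm{\cdot}_{\hilb}$ recalled in Remark~\ref{mod}.

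For closedness, I would take a sequence $\set{(x_m,\lambda_m)}_{m\in\N}\subset\cR(H_1)$ converging to some $(x,\lambda)$ in $\hilb\times\R^k$ and show $(x,\lambda)\in\cR(H_1)$. Convergence in $\hilb$ forces both $x_m\to x$ and $\dot x_m\to\dot x$ in $L^2_{2\pi}$. Since $\nabla_x H_1$ is continuous with compact support, the superposition operator $(z,\alpha)\mapsto J\nabla_x H_1(z(\cdot),\alpha)$ is continuous from $L^2_{2\pi}\times\R^k$ to $L^2_{2\pi}$ (the Krasnosiel'skii argument used in Lemma~\ref{homsol}), so $J\nabla_x H_1(x_m(\cdot),\lambda_m)\to J\nabla_x H_1(x(\cdot),\lambda)$ in $L^2_{2\pi}$. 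Passing to the limit in $\dot x_m=J\nabla_x H_1(x_m(\cdot),\lambda_m)$ yields $\dot x=J\nabla_x H_1(x(\cdot),\lambda)$, so $x$ is a weak solution for the parameter $\lambda$, hence (by the regularity noted above) a classical one, and $(x,\lambda)\in\cR(H_1)$.

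For relative compactness, I would let $S\subset\cR(H_1)$ be bounded and pick any sequence $\set{(x_m,\lambda_m)}_{m\in\N}\subset S$. Boundedness gives a convergent subsequence $\lambda_m\to\lambda$ in $\R^k$ and a uniform bound $\norm{x_m}_{\hilb}\le R$, whence $\norm{x_m}_0\le cR$. Using the equation and the global bound $\sup\abs{J\nabla_x H_1}<\infty$ (compact support), the derivatives $\dot x_m$ are uniformly bounded, so the $x_m$ are equibounded and uniformly Lipschitz; by Arzel\`a--Ascoli a further subsequence converges uniformly, hence $x_m\to x$ in $L^2_{2\pi}$. As above, continuity of the superposition operator gives $\dot x_m=J\nabla_x H_1(x_m(\cdot),\lambda_m)\to J\nabla_x H_1(x(\cdot),\lambda)$ in $L^2_{2\pi}$; combined with $x_m\to x$ in $L^2_{2\pi}$ and the closedness of the weak-derivative operator, this yields $x_m\to x$ in $\hilb$ with $\dot x=J\nabla_x H_1(x(\cdot),\lambda)$. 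Thus the subsequence converges in $\hilb\times\R^k$ to a point of $\cR(H_1)$, which proves relative compactness.

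The only delicate point is upgrading the $L^2_{2\pi}$-convergence extracted from Arzel\`a--Ascoli to convergence in the stronger $\hilb$-norm; this is exactly where the differential equation is used, since it slaves $\dot x_m$ to $x_m$ through the continuous superposition operator and thereby forces the derivatives to converge in $L^2_{2\pi}$. The compact support of $H_1$, which removes any need for growth conditions (cf. Remark~\ref{mod}), is what makes both the uniform Lipschitz bound and the Krasnosiel'skii continuity available without further hypotheses.
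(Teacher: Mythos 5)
Your proposal is correct, but it takes a genuinely different route from the paper's proof. The paper argues abstractly through the variational structure: closedness of $\cR(H_1)$ is obtained at once because $\cR(H_1)$ is the critical set of the $C^{2,0}$ action functional on $\hilb\times\R^k$ (citing~\cite{DR}); for compactness the author passes to the action functional on $H^{\frac{1}{2}}_{2\pi}\times\R^k$ (citing~\cite{RA,Abb}), where its gradient is a compact perturbation of a selfadjoint Fredholm operator -- whereas on $\hilb\times\R^k$ the Hessian is compact, not Fredholm -- deduces that bounded subsets of $\cR(H_1)$ are relatively compact in the $H^{\frac{1}{2}}_{2\pi}$ topology, and then transfers this back to $\hilb\times\R^k$ via the equivalence of the two topologies on $\cR(H_1)$ established in Lemma~\ref{homsol}. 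You instead work directly with the differential equation: the compact support of $H_1$ gives a uniform pointwise bound $M$ on $\abs{J\nabla_xH_1}$, hence $\abs{\dot{x}_m(t)}\leq M$, so the solutions are equibounded (via $\norm{\cdot}_0\leq c\norm{\cdot}_{\hilb}$) and uniformly Lipschitz; Arzel\`a--Ascoli extracts a uniformly convergent subsequence; and the Krasnosiel'skii continuity of the superposition operator (the very ingredient of Lemma~\ref{homsol}) slaves $\dot{x}_m$ to $x_m$, upgrading $L^2_{2\pi}$-convergence to $\hilb$-convergence, with the closedness of the weak-derivative operator identifying the limit as a solution (its periodicity being automatic, since every element of $\hilb$ satisfies $x(0)=x(2\pi)$ and the convergence is uniform). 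Your argument buys self-containedness and elementarity: it avoids $H^{\frac{1}{2}}_{2\pi}$, Fredholm theory, and the external references entirely, exploiting only the ODE structure. The paper's argument is shorter modulo those citations and sits naturally within the variational framework used throughout; both proofs ultimately rest on the same two consequences of compact support, namely the global bound on $\nabla_xH_1$ and the continuity of the superposition operator on $L^2_{2\pi}\times\R^k$.
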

\begin{proof}
The set $\cR(H_1)$ is closed in $\hilb\times\R^k$ as the set of
critical points of the action functional of class $C^{2,0}$ defined
on $\hilb\times\R^k$ (see~\cite{DR}). If the action functional is
defined on $H^{\frac{1}{2}}_{2\pi}\times\R^k$ (see~\cite{RA,Abb})
then it is still of class $C^{2,0},$ the set of its critical points
is still equal to $\cR(H_1),$ and its gradient is a~compact
perturbation of a~selfadjoint Fredholm operator. (In the case of
the space $\hilb\times\R^k$ the Hessian operator of the action
functional is compact not Fredholm.) Thus $\cR(H_1)$ is closed in
$H^{\frac{1}{2}}_{2\pi}\times\R^k$ and bounded subsets of $\cR(H_1)$
are relatively compact in $H^{\frac{1}{2}}_{2\pi}\times\R^k$.
However, those subsets of $\cR(H_1)$ that are bounded in
$\hilb\times\R^k$ are also bounded in
$H^{\frac{1}{2}}_{2\pi}\times\R^k$ and both topologies restricted to
$\cR(H_1)$ coincide, in view of Lemma~\ref{homsol}.
\end{proof}

\section{Necessary conditions for bifurcation and symmetry breaking}

\begin{remark}{\label{isper}}
Let $x\in\hilb$ and $j\in\N.$ Then
\begin{enumerate}
\item{\label{assstat}} $\SO(2)_x=\SO(2)$ iff $x$ is a~constant
      function,
\item{\label{asscont}} $\SO(2)_x\supset\Z_j$ iff
      $\frac{2\pi}{j}$ is a~period (not necessarily minimal)
      of $x,$
\item{\label{assequ}} $\SO(2)_x=\Z_j$ iff $\frac{2\pi}{j}$ is
      the minimal period of $x.$
\end{enumerate}
Equivalence~\eqref{assstat} is straightforward.

To see~\eqref{asscont} first observe that $g\in\Z_j$ iff $g$ is of
form~\eqref{gform} with $\phi=\frac{2\pi}{j}k$ for some
$k\in\set{0,\ldots,j-1}.$ For such a~$g$ one has
\begin{equation}{\label{ok}}
(gx)(t)=x\left(t+\frac{2\pi}{j}k\right).
\end{equation}
If $\SO(2)_x\supset\Z_j$ then~\eqref{ok} implies
\begin{equation}{\label{okey}}
x\left(t+\frac{2\pi}{j}k\right)=x(t).
\end{equation}
In particular, putting $k=1$ one finds that $\frac{2\pi}{j}$ is
a~period of $x.$ Conversely, if $\frac{2\pi}{j}$ is a~period of $x,$
then~\eqref{okey} is satisfied for every $k\in\Z,$ hence~\eqref{ok}
implies $\SO(2)_x\supset\Z_j.$

Now, turn to assertion~\eqref{assequ}. If $\SO(2)_x=\Z_j$ then
$\frac{2\pi}{j}$ is a~period of $x,$ in view of~\eqref{asscont}.
If there was a~smaller period of $x$ then it would
be equal to $\frac{2\pi}{m}$ for some $m\in\N,$ $m>j,$ since $x$
is \mbox{$2\pi$-pe}riodic. Then~\eqref{asscont} would imply
$\Z_m\subset\SO(2)_x=\Z_j,$ a~contradiction. Conversely, if
$\frac{2\pi}{j}$ is the minimal period of $x$ then
$\SO(2)_x\supset\Z_j,$ according to~\eqref{asscont}. Moreover,
for every $g\in\SO(2)_x$ of form~\eqref{gform} $\phi$ is a~period
of $x.$ Thus $\phi$ has to be an integer multiple of
$\frac{2\pi}{j},$ hence $g\in\Z_j.$ Consequently,
$\SO(2)_x\subset\Z_j.$
\end{remark}

\begin{definition}
Let $j\in\N\cup\set{0}.$ A~solution $(x,\lambda)$
of~\eqref{parhamniel} is called a~\emph{\mbox{$j$-so}lution} if
$\SO(2)_{(x,\lambda)}\equiv\SO(2)_x\supset\K_j.$
\end{definition}

If $j\in\N$ then $(x,\lambda)$ is a~\mbox{$j$-so}lution
of~\eqref{parhamniel} iff $\frac{2\pi}{j}$ is a~period (not
necessarily minimal) of $x,$ whereas \mbox{$0$-so}lutions are the
stationary ones.

In the reminder of this section $x_0$ satisfying~\eqref{stationary}
is fixed and $\cT(x_0)=\set{x_0}\times\R^k$ is regarded as the set
of trivial solutions of~\eqref{parhamniel}.

\begin{definition}{\label{symbrdef}}
A~point $(x_0,\lambda_0)\in\cT(x_0)$ is called a~\emph{symmetry
breaking point for~\eqref{parhamniel}} if every neighbourhood of
$(x_0,\lambda_0)$ in $\hilb\times\R^k$ contains at least two
nontrivial solutions of~\eqref{parhamniel} with different isotropy
groups (or, equivalently, different minimal periods -- see
Remark~\ref{isper}).
\end{definition}

Proofs of theorems on symmetry breaking in this paper exploit the
following lemma, based on a~remark from~\cite{Dis}.

\begin{lemma}{\label{isotropy}}
If $H\in C^{2,0}(\R^{2n}\times\R^k,\R)$ then for every
$\lambda_0\in\R^k$ there exists a~neighbourhood
$U\subset\hilb\times\R^k$ of $(x_0,\lambda_0)\in\cT(x_0)$ such that
the isotropy group $\SO(2)_{(x,\lambda)}=\SO(2)_x$ of every
nontrivial solution $(x,\lambda)\in U\cap \cNT(x_0)$
of~\eqref{parhamniel} belongs to the set $G(\lambda_0)$ of isotropy
groups of nonzero elements of the finite dimensional space
$E(\lambda_0)=\displaystyle \bigoplus_{j\in X(\lambda_0)}E_j,$ where
\begin{equation*}
X(\lambda_0) =\set{j\in\N\cup\set{0}\cond
\det Q_j(\nabla_x^2H(x_0,\lambda_0))=0}.
\end{equation*}
\end{lemma}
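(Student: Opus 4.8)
The plan is to combine the finite-dimensional reduction of Theorem~\ref{finitedim} with an equivariant Lyapunov--Schmidt splitting that slaves the nondegenerate Fourier modes to the kernel modes indexed by $X(\lambda_0)$. First I would localize: given $\lambda_0$, fix a bounded neighbourhood of $(x_0,\lambda_0)$ and, by Remark~\ref{mod}, replace $H$ by a compactly supported $H_1$ agreeing with $H$ near $(x_0,\lambda_0)$, so that $\nabla_x^2H_1(x_0,\lambda)=\nabla_x^2H(x_0,\lambda)$ for $\lambda$ near $\lambda_0$ and hence $X(\lambda_0)$ is unchanged. Applying Theorem~\ref{finitedim} produces the finite-dimensional space $E_f=\bigoplus_{j=0}^{r_0}E_j$, the equivariant map $a$, and the $\SO(2)$-equivariant homeomorphism $h$ carrying $(\nabla_x a)^{-1}(\set{0})$ onto $\cR(H_1)$. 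Since an equivariant bijection preserves isotropy groups, $\SO(2)_{(x,\lambda)}=\SO(2)_{h^{-1}(x,\lambda)}$, so it suffices to analyse the isotropy groups of critical points $v\in E_f$ of $a(\cdot,\lambda)$ lying near $x_0$ for $\lambda$ near $\lambda_0$. Note that conclusion~\eqref{amzlemmor} forces $X(\lambda_0)\subset\set{0,\ldots,r_0}$, so $E(\lambda_0)=\bigoplus_{j\in X(\lambda_0)}E_j$ is a finite-dimensional $\SO(2)$-invariant subspace of $E_f$.

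Next I would split $E_f=E(\lambda_0)\oplus E^{\perp}$, where $E^{\perp}=\bigoplus_{0\le j\le r_0,\;j\notin X(\lambda_0)}E_j$, both summands being $\SO(2)$-invariant. By conclusion~(3) of Theorem~\ref{finitedim} the Hessian $\nabla_x^2a(x_0,\lambda_0)=\diag{Q_0,\ldots,Q_{r_0}}$ is block diagonal for this splitting, and its restriction to $E^{\perp}$ is the block diagonal matrix of the $Q_j(\nabla_x^2H(x_0,\lambda_0))$ with $j\notin X(\lambda_0)$, each of which is nonsingular by the very definition of $X(\lambda_0)$. Writing the displacement of a nearby critical point as $v-x_0=u+s$ with $u\in E(\lambda_0)$, $s\in E^{\perp}$, and using $\nabla_x a(x_0,\lambda)=0$, the $E^{\perp}$-projection of the equation $\nabla_x a=0$ has invertible partial derivative in $s$ at $(0,0,\lambda_0)$. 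The implicit function theorem then yields a unique continuous map $s=\sigma(u,\lambda)$ solving this projected equation near $(0,\lambda_0)$, with $\sigma(0,\lambda)=0$; equivariance of $\nabla_x a$ and of the projections forces $\sigma$ to be $\SO(2)$-equivariant, whence $\SO(2)_{\sigma(u,\lambda)}\supset\SO(2)_u$ for every $u$.

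Finally, by the uniqueness part of the implicit function theorem every critical point of $a(\cdot,\lambda)$ near $x_0$ has the form $v=x_0+u+\sigma(u,\lambda)$, and it is \emph{nontrivial} precisely when $u\neq 0$ (since $\sigma(0,\lambda)=0$ gives $v=x_0$ when $u=0$). Because $x_0\in E_0$ is $\SO(2)$-fixed and $E(\lambda_0)\cap E^{\perp}=\set{0}$, one gets $\SO(2)_v=\SO(2)_u\cap\SO(2)_{\sigma(u,\lambda)}=\SO(2)_u$ with $u\in E(\lambda_0)\setminus\set{0}$, hence $\SO(2)_v\in G(\lambda_0)$. Transporting this back through $h$ gives the desired neighbourhood $U$. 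I expect the main obstacle to be the careful bookkeeping in the equivariant implicit function theorem: one must check that the projected equation really has the nonsingular $E^{\perp}$-block as its $s$-derivative, that the resulting $\sigma$ inherits equivariance (so that the crucial inclusion $\SO(2)_{\sigma(u,\lambda)}\supset\SO(2)_u$ holds), and that $h$ respects the $\lambda$-coordinate and maps trivial solutions to trivial critical points, so that the correspondence of neighbourhoods and of the trivial/nontrivial dichotomy is legitimate.
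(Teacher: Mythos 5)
Your proof is correct, and it follows the paper's overall strategy (localization via Remark~\ref{mod}, reduction via Theorem~\ref{finitedim}, then an equivariant implicit-function argument), but with a genuinely different splitting. The paper decomposes $E_f=\im\nabla_x^2a(x_0,\lambda_0)\oplus\ker\nabla_x^2a(x_0,\lambda_0)$ and slaves the image part, so the reduced solutions are parametrized by the \emph{kernel}; it then needs two extra steps that you avoid: the inclusion $\ker\nabla_x^2a(x_0,\lambda_0)\subset E(\lambda_0)$ (read off from the block-diagonal Hessian), and a small case analysis when $v_0=(Id-\Pi)(x_0)\neq 0$, handled by shrinking $V$ so that $(0,\lambda)\notin V$. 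You instead split along whole Fourier blocks, $E_f=E(\lambda_0)\oplus E^{\perp}$ with $E^{\perp}=\bigoplus_{j\le r_0,\,j\notin X(\lambda_0)}E_j$, slaving only the blocks $Q_j$ that are nonsingular in their entirety. This is a coarser reduction -- a singular block $Q_j$, $j\in X(\lambda_0)$, generally has a nonsingular part inside $E_j$ which the paper also slaves via $\im$, whereas you leave it in the reduced variable -- but invertibility of the $E^{\perp}$-block is all the implicit function theorem needs, and the coarser choice pays off for this particular lemma: the nontrivial displacement $u$ lies directly in $E(\lambda_0)\setminus\set{0}$, so $\SO(2)_v=\SO(2)_u\in G(\lambda_0)$ immediately, and centering the decomposition at $x_0$ (so that $\sigma(0,\lambda)=0$ forces ``nontrivial iff $u\neq 0$'') eliminates the $v_0\neq 0$ adjustment altogether. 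Both arguments ultimately rest on the same two inputs -- block-diagonality of $\nabla_x^2a(x_0,\lambda_0)$ with $\det Q_j\neq 0$ for $j\notin X(\lambda_0)$ (in particular for $j>r_0$, which gives finiteness of $X(\lambda_0)$), and equivariance of the implicitly defined map yielding $\SO(2)_u\subset\SO(2)_{\sigma(u,\lambda)}$. The bookkeeping items you flag at the end (that $h$ respects the $\lambda$-coordinate and the trivial/nontrivial dichotomy) are likewise left to the construction of $h$ in the paper, which simply asserts that it suffices to consider the reduced equation, so your treatment is no less complete on that point.
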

\begin{proof}
By Remark~\ref{mod} and Theorem~\ref{finitedim} it suffices to
consider isotropy groups of solutions $(z,\lambda)$ of the equation
\begin{equation}{\label{fa}}
\nabla_xa(z,\lambda)=0,
\end{equation}
such that $z\in E_f\backslash\set{x_0}$ (such
solutions are regarded as nontrivial solutions of~\eqref{fa}). Since
$\nabla_x^2a(x_0,\lambda_0)$ is symmetric, one can use the
decomposition
\begin{equation*}
E_f=(\im\nabla_x^2a(x_0,\lambda_0))
\oplus(\ker\nabla_x^2a(x_0,\lambda_0))
\end{equation*}
and write~\eqref{fa} as the system of equations
\begin{align}
\Pi \nabla_xa(u,(v,\lambda))&=0 \label{faim}\\
\nonumber (Id-\Pi) \nabla_xa(u,(v,\lambda))&=0,
\end{align}
where $\Pi$ is a~projection of $E_f$ onto
$\im\nabla_x^2a(x_0,\lambda_0),$ $u=\Pi(z),$ $v=(Id-\Pi)(z).$ Write
also $x_0=(u_0,v_0),$ where $u_0=\Pi(x_0),$
$v_0=(Id-\Pi)(x_0).$ Applying the \mbox{$\SO(2)$-equi}variant
version of the implicit function theorem to~\eqref{faim} one obtains
the existence of an open neighbourhood $W$ of
$u_0\in\im\nabla_x^2a(x_0,\lambda_0),$ an open neighbourhood $V$ of
$(v_0,\lambda_0)\in\ker\nabla_x^2a(x_0,\lambda_0)\times\R^k,$ and an
\mbox{$\SO(2)$-equi}variant mapping $\gamma\colon V\rightarrow W$
of class $C^{1,0}$ such that if
$(u,(v,\lambda))\in W\times V$ is a~solution of~\eqref{fa} then
$u=\gamma(v,\lambda).$ (In particular, $\gamma(v_0,\lambda)=u_0$
for every $\lambda\in\R^k$ such that $(v_0,\lambda)\in V,$ since
$\nabla_x a(u_0,(v_0,\lambda))=0.$) Thus all nontrivial solutions
of~\eqref{fa} in $U\eqdf W\times V$ are of the form
$(\gamma(v,\lambda),(v,\lambda)),$ where
$v\in\ker\nabla_x^2a(x_0,\lambda_0),$ $v\neq v_0.$ Their isotropy
groups are equal to
\begin{equation*}
\SO(2)_{(\gamma(v,\lambda),(v,\lambda))}
=\SO(2)_{\gamma(v,\lambda)}\cap\SO(2)_{(v,\lambda)}.
\end{equation*}
Furthermore,
$\SO(2)_{(v,\lambda)}\subset\SO(2)_{\gamma(v,\lambda)},$ since
the mapping $\gamma$ is \mbox{$\SO(2)$-equi}\-va\-riant, hence
\begin{equation*}
\SO(2)_{(\gamma(v,\lambda),(v,\lambda))}
=\SO(2)_{(v,\lambda)}=\SO(2)_{v},
\end{equation*}
where $v\in\ker\nabla_x^2a(x_0,\lambda_0),$ $v\neq v_0.$ Thus if
$v_0=0$ then the isotropy groups of nontrivial solutions
of~\eqref{fa} in a~neighbourhood of $(x_0,\lambda_0)$ belong to the
set of isotropy groups of nonzero elements of
$\ker\nabla_x^2a(x_0,\lambda_0).$ The same condition is obtained
for $v_0\neq 0$ by choosing the set $V$ in such a~way that
$(0,\lambda)\not\in V$ for all $\lambda\in\R^k.$ Finally, observe
that, by Theorem~\ref{finitedim},
$\ker\nabla_x^2a(x_0,\lambda_0)\subset E(\lambda_0),$ and that
$\dim E(\lambda_0)<4nr_0+2n<\infty,$ since
$E(\lambda_0)\subset E_f.$
\end{proof}

The set $G(\lambda_0)$ from Lemma~\ref{isotropy} consists of
groups $\K_j,$ $j\in X(\lambda_0),$ and all their
intersections. Every such intersection is also equal to $\K_l$ for
some $l\in\N\cup\set{0}.$ Namely, $\Z_j\cap\SO(2)=\Z_j$ and
$\Z_j\cap\Z_m=\Z_l,$ where $l$ is the greatest common divisor of
$j$ and $m.$ Thus
\begin{equation*}
G(\lambda_0) \subset\set{\K_j\cond
j\in\set{0,\ldots,\max(X(\lambda_0))}}.
\end{equation*}
In particular, the set $G(\lambda_0)$ is finite, since
$X(\lambda_0)$ is finite.

As a~consequence of Lemma~\ref{isotropy} and the definition of
\mbox{$j$-so}lution one obtains the following version of necessary
conditions for bifurcation formulated in~\cite{DR}.

\begin{corollary}{\label{necniel}}
Let $H\in C^{2,0}(\R^{2n}\times\R^k,\R).$ If
$(x_0,\lambda_0)\in\cT(x_0)$ is a~bifurcation point of nontrivial
solutions of~\eqref{parhamniel} then
$\lambda_0\in\Lambda_0(\nabla_x^2H(x_0,\cdot))\cup
\Lambda(\nabla_x^2H(x_0,\cdot)).$ Namely,
\begin{enumerate}
\item if $(x_0,\lambda_0)$ is a~bifurcation point of nontrivial
      stationary solutions then
      \begin{equation*}
      \lambda_0\in\Lambda_0(\nabla_x^2H(x_0,\cdot));
      \end{equation*}
\item{\label{necjsol}} if $(x_0,\lambda_0)$ is a~bifurcation
      point of nonstationary
      \mbox{$j$-solu}tions for some $j\in\N$ then
      \begin{equation*}
      \lambda_0\in
      \bigcup_{l\in\N}\Lambda_{lj}(\nabla_x^2H(x_0,\cdot)).
      \end{equation*}
\end{enumerate}
\end{corollary}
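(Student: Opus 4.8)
The plan is to read the conclusion directly off Lemma~\ref{isotropy} together with the description of the set $G(\lambda_0)$ recorded immediately after that lemma, namely that $G(\lambda_0)$ consists of the groups $\K_{j'}$ with $j'\in X(\lambda_0)$ and of all their intersections, each intersection being again some $\K_l$ whose index is the greatest common divisor of the indices intersected. First I would unwind Definition~\ref{genbifdef}: a bifurcation point $(x_0,\lambda_0)$ of nontrivial solutions from a set $X$ is a cluster point of $X$, so I fix a sequence $\set{(x_m,\lambda_m)}\subset X$ of nontrivial solutions converging to $(x_0,\lambda_0)$ in $\hilb\times\R^k$. Taking the neighbourhood $U$ furnished by Lemma~\ref{isotropy}, all but finitely many $(x_m,\lambda_m)$ lie in $U\cap\cNT(x_0)$, whence $\SO(2)_{x_m}\in G(\lambda_0)$ for all large $m$.

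The general assertion $\lambda_0\in\Lambda_0(\nabla_x^2H(x_0,\cdot))\cup\Lambda(\nabla_x^2H(x_0,\cdot))$ is simply the statement $X(\lambda_0)\neq\emptyset$, and it follows at once: were $X(\lambda_0)=\emptyset$, then $E(\lambda_0)=\set{0}$, hence $G(\lambda_0)=\emptyset$, leaving no possible value for the isotropy groups $\SO(2)_{x_m}$. For part~(1) the solutions $x_m$ are constant, so $\SO(2)_{x_m}=\SO(2)=\K_0$ by Remark~\ref{isper}. Since an intersection $\bigcap_{j'\in S}\K_{j'}$ equals $\SO(2)$ only when every index is $0$, the membership $\K_0\in G(\lambda_0)$ forces $0\in X(\lambda_0)$, i.e. $\det Q_0(\nabla_x^2H(x_0,\lambda_0))=0$ and $\lambda_0\in\Lambda_0(\nabla_x^2H(x_0,\cdot))$.

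For part~(2) each $x_m$ is nonstationary, so by Remark~\ref{isper} its isotropy group is a finite cyclic group $\Z_{n_m}$, and the \mbox{$j$-so}lution condition $\SO(2)_{x_m}\supset\Z_j$ gives $j\mid n_m$. Writing $\Z_{n_m}\in G(\lambda_0)$ as $\bigcap_{j'\in S_m}\K_{j'}$ with $\emptyset\neq S_m\subset X(\lambda_0)$, one has $n_m=\gcd(S_m)$, so $n_m\mid j'$ for every $j'\in S_m$; combined with $j\mid n_m$ this yields $j\mid j'$ for all $j'\in S_m$. As $n_m\geq 1$, the set $S_m$ cannot be $\set{0}$ and so contains a positive index $j'$, which is then a positive multiple $j'=lj$ of $j$ lying in $X(\lambda_0)$. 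Therefore $\det Q_{lj}(\nabla_x^2H(x_0,\lambda_0))=0$ and $\lambda_0\in\Lambda_{lj}(\nabla_x^2H(x_0,\cdot))\subset\bigcup_{l\in\N}\Lambda_{lj}(\nabla_x^2H(x_0,\cdot))$.

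I expect the only delicate point to be the arithmetic in part~(2): passing from ``$\Z_{n_m}\in G(\lambda_0)$ and $j\mid n_m$'' to ``$X(\lambda_0)$ contains a positive multiple of $j$''. The subtlety is that $G(\lambda_0)$ encodes only greatest common divisors of subsets of $X(\lambda_0)$, so $n_m$ itself need not belong to $X(\lambda_0)$; the divisibility must instead be transferred onto the generating indices $j'\in S_m$, using that $\gcd(S_m)$ divides each of them and that $j$ divides $\gcd(S_m)$, after which any positive generator does the job.
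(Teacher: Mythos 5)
Your proof is correct and follows essentially the same route as the paper's: both read the conclusion off Lemma~\ref{isotropy} together with the description of $G(\lambda_0)$ as the groups $\K_{j'},$ $j'\in X(\lambda_0),$ and their intersections, the key step in part~(2) being that the isotropy group $\Z_{n_m}$ of a nonstationary \mbox{$j$-solu}tion near $(x_0,\lambda_0)$ must be contained in some $\Z_r$ with $r\in X(\lambda_0)\backslash\set{0},$ which forces $r$ to be a positive multiple of $j.$ Your gcd bookkeeping ($n_m=\gcd(S_m),$ then $j\mid n_m\mid j'$) is just an explicit restatement of the paper's step that $\Z_{sj}\subset\Z_r$ implies $r=msj.$
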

\begin{proof}
If $\lambda_0\not\in\Lambda_0(\nabla_x^2H(x_0,\cdot))$ then there
are no nontrivial stationary solutions in a~neighbourhood of
$(x_0,\lambda_0),$ according to Lemma~\ref{isotropy}.
To prove~\eqref{necjsol}
assume that $U,$ $G(\lambda_0),$ and $X(\lambda_0)$ are such as in
Lemma~\ref{isotropy}. The isotropy group of every nonstationary
\mbox{$j$-solu}tion from the set $U$ contains $\Z_j,$ therefore it
is equal to $\Z_{sj}$ for some $s\in\N,$ which depends on the
solution. The group $\Z_{sj}$ belongs to
$G(\lambda_0)\backslash\set{\SO(2)},$ according to
Lemma~\ref{isotropy}. Thus $\Z_{sj}\subset\Z_{r}$ for
some $r\in X(\lambda_0)\backslash\set{0},$ which implies that
$r=msj$ for some $m\in\N.$ Setting $l=ms$ one has
$\det Q_{lj}(\nabla_x^2H(x_0,\lambda_0))
=\det Q_r(\nabla_x^2H(x_0,\lambda_0))=0,$ hence
$\lambda_0\in\Lambda_{lj}(\nabla_x^2H(x_0,\cdot)).$
\end{proof}

\begin{remark}{\label{remnec}}
If $(x_0,\lambda_0)$ is completely degenerate,
i.e. $\nabla_x^2 H(x_0,\lambda_0)=0,$ then for every $j\in\N$ one
has $\det Q_j(\nabla_x^2 H(x_0,\lambda_0))\neq 0.$ In such
a~case $(x_0,\lambda_0)$ is not a~bifurcation point of nonstationary
solutions of~\eqref{parhamniel}, in view of Corollary~\ref{necniel}.
\end{remark}

As a~consequence it is now proved that if a~completely degenerate
stationary point of a~Hamiltonian system without parameter is an
emanation point of periodic orbits then the minimal periods of that
orbits tend to infinity as the orbits converge to the
stationary point.

\begin{corollary}{\label{infpercor}}
Let $H\in C^2(\R^{2n},\R).$ If $x_0\in (\nabla H)^{-1}(\set{0})$ is
completely degenerate, i.e. $\nabla^2H(x_0)=0,$ then for every $C>0$
there exists $\delta>0$ such that every nonstationary periodic orbit
of~\eqref{ham} contained in the ball in $\R^{2n}$
centred at $x_0$ with radius $\delta$ has the minimal period greater
then $C.$
\end{corollary}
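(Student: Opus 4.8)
The plan is to argue by contradiction, turning a shrinking family of periodic orbits of the autonomous equation~\eqref{ham} into a sequence of nontrivial $2\pi$-periodic solutions of the one-parameter problem~\eqref{parham}, and then to invoke the necessary condition for bifurcation of nonstationary solutions (Corollary~\ref{necniel}, together with Remark~\ref{remnec}). First I would negate the claim: suppose there is $C>0$ and, for each $m\in\N,$ a nonstationary periodic solution $x_m$ of~\eqref{ham} whose orbit lies in $B(x_0,\tfrac1m)\subset\R^{2n}$ and whose minimal period $p_m$ satisfies $0<p_m\le C.$ Rescaling time by $y_m(s)=x_m(\tfrac{p_m}{2\pi}s)$ yields a $2\pi$-periodic function with minimal period $2\pi$ satisfying $\dot y_m(s)=\lambda_m J\nabla H(y_m(s)),$ where $\lambda_m=\tfrac{p_m}{2\pi}\in\halfline.$ Thus $(y_m,\lambda_m)$ solves~\eqref{parham}, which is~\eqref{parhamniel} for $k=1$ and the Hamiltonian $\widehat H(x,\lambda)=\lambda H(x).$ One checks that $\widehat H\in C^{2,0}(\R^{2n}\times\R,\R),$ that $\nabla_x\widehat H(x_0,\lambda)=\lambda\nabla H(x_0)=0$ for all $\lambda$ so that~\eqref{stationary} holds with $\Delta=\set{x_0},$ and that $\nabla_x^2\widehat H(x_0,\lambda)=\lambda\nabla^2H(x_0)=0$ for every $\lambda\in\R$ by complete degeneracy.

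Next I would check that $(y_m,\lambda_m)$ accumulates at a point of $\cT(x_0).$ Since $y_m(s)\in B(x_0,\tfrac1m)$ for all $s,$ one has $\norm{y_m-x_0}_{L^2_{2\pi}}\to0;$ moreover $\dot y_m=\lambda_m J\nabla H(y_m)$ with $\set{\lambda_m}$ bounded and $\nabla H(y_m)\to0$ uniformly (because $y_m\to x_0$ uniformly and $\nabla H(x_0)=0$), so $\norm{\dot y_m}_{L^2_{2\pi}}\to0.$ Hence $y_m\to x_0$ in $\hilb.$ As $\set{\lambda_m}$ is bounded, passing to a subsequence gives $\lambda_m\to\lambda_0\in[0,\tfrac{C}{2\pi}],$ so $(y_m,\lambda_m)\to(x_0,\lambda_0)$ in $\hilb\times\R.$ Each $y_m$ is nonconstant, so $(y_m,\lambda_m)\in\cNT(x_0)$ is a nonstationary solution, and by Definition~\ref{genbifdef} the point $(x_0,\lambda_0)$ is a bifurcation point of nonstationary solutions of~\eqref{parham}.

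Finally I would derive the contradiction. Being nonstationary, the $y_m$ are nonstationary $1$-solutions, so Corollary~\ref{necniel}, part~\eqref{necjsol}, applied with $j=1,$ forces $\lambda_0\in\bigcup_{l\in\N}\Lambda_l(\nabla_x^2\widehat H(x_0,\cdot))=\Lambda(\nabla_x^2\widehat H(x_0,\cdot)).$ But $\nabla_x^2\widehat H(x_0,\lambda)=0$ for every $\lambda,$ so $\det Q_l(\nabla_x^2\widehat H(x_0,\lambda))=\det Q_l(0)\ne0$ for all $l\in\N$ by Remark~\ref{remnec}, whence $\Lambda(\nabla_x^2\widehat H(x_0,\cdot))=\emptyset$ — a contradiction.

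The step I expect to be the real crux is the handling of the limiting parameter $\lambda_0.$ A naive worry is that the minimal periods might go to zero, i.e. $\lambda_0=0,$ putting the limit outside the region $\halfline$ where~\eqref{parham} corresponds to~\eqref{ham} and seemingly requiring a separate lower estimate on $p_m.$ The point worth emphasising is that the final step is completely insensitive to the value of $\lambda_0$: even when $\lambda_0=0,$ the matrices $Q_l(0)$ stay nonsingular for every $l\ge1,$ so $\Lambda$ is empty regardless. Consequently no lower bound on the minimal periods is needed, and the only genuine labour is the bookkeeping of the rescaling together with the verification of the $\hilb$-convergence $y_m\to x_0;$ the heart of the matter is simply the vanishing of $\nabla_x^2\widehat H(x_0,\cdot),$ which empties $\Lambda.$
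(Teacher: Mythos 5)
Your proposal is correct and takes essentially the same route as the paper's own proof: rescale the orbits to $2\pi$-periodic solutions $(y_m,\lambda_m)$ of~\eqref{parham}, establish convergence to $(x_0,\lambda_0)$ in $\hilb\times\R$ via the same $L^2$ estimate on $\dot y_m$ (the paper's estimate with $M_n=\sup_{\abs{\xi-x_0}\le 1/n}\abs{J\nabla H(\xi)}^2$), and contradict the necessary condition of Remark~\ref{remnec}. Your closing observation is also exactly the point: since $\det Q_l(0)\neq 0$ for every $l\in\N,$ the contradiction is insensitive to the value of $\lambda_0$ (even $\lambda_0=0$), so no lower bound on the minimal periods is required, just as in the paper.
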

\begin{proof}
Let $\set{x_n}_{n\in\N}$ be a~sequence of nonstationary periodic
solutions of~\eqref{ham} such that $\norm{x_n-x_0}_{0}<\frac{1}{n}$
for every $n\in\N,$ where $\norm{\cdot}_{0}$ denotes the supremum
norm. Let $T_n$ be the minimal period of $x_n$ for every $n\in\N.$
Set $\overline{x}_n(t)\eqdf x_n((T_n/2\pi)t)$ and
$\lambda_n\eqdf T_n/2\pi.$
Then $(\overline{x}_n,\lambda_n)$ is a~solution of~\eqref{parham}.
Suppose, on the contrary, that the sequence
$\set{T_n}_{n\in\N}$ has a~bounded subsequence. Then passing to
a~subsequence once again one can assume that
$\lambda_n\to\lambda_0$ as $n\to\infty$ for some $\lambda_0\in\R.$
Now, let
\begin{equation*}
M_n=\sup_{\abs{\xi-x_0}\leq\frac{1}{n}}\abs{J\nabla H(\xi)}^2.
\end{equation*}
Notice that $M_n\to 0$ as $n\to\infty,$ since $\nabla H(x_0)=0.$
As in estimate~(5.1) in~\cite{Rd} one has
\begin{align*}
\norm{\overline{x}_n-x_0}_{\hilb}^2&=
\int_{0}^{2\pi}\left[\abs{\overline{x}_n(t)-x_0}^2
+ \abs{\dot{\overline{x}}_n(t)}^2\right]\mathrm{d}t\\
&\leq 2\pi\norm{\overline{x}_n-x_0}_{0}^2
+\lambda_n^2\int_{0}^{2\pi}
\abs{J\nabla H(\overline{x}_n(t))}^2\mathrm{d}t\\
&\leq 2\pi\norm{\overline{x}_n-x_0}_0^2+\lambda_n^2 2\pi M_n
<2\pi\frac{1}{n^2}+\lambda_n^2 2\pi M_n,
\end{align*}
since
$\norm{\overline{x}_n-x_0}_{0}=\norm{x_n-x_0}_{0}<\frac{1}{n}.$ Thus
$(\overline{x}_n,\lambda_n)\to (x_0,\lambda_0)$ as $n\to\infty$ in
$\hilb\times\R,$ a~contradiction (see Remark~\ref{remnec}).
\end{proof}

\begin{remark}
Lemma~\ref{isotropy} can exclude symmetry breaking in the situation
when Corollary~\ref{necniel} does not exclude it. For example,
assume that
$\det Q_6(\nabla_x^2H(x_0,\lambda_0))=0$ and
$\det Q_j(\nabla_x^2H(x_0,\lambda_0))\neq 0$ for $j\in\N,$
$j\neq 6,$ which holds for diagonal matrix
$\nabla_x^2H(x_0,\lambda_0)$ with the $n$th and the $2n$th element
of the diagonal equal to $6$ and the rest of elements equal to $0$
(see Remark~\ref{inters} and condition~\eqref{maxrem}). Then
$E(\lambda_0)=E_0\cup E_6$ and the only possible isotropy group of
nonstationary solutions of~\eqref{parhamniel} in a~neighbourhood of
$(x_0,\lambda_0)$ is $\Z_6,$ which corresponds to the minimal period
$\frac{2\pi}{6}.$ This excludes symmetry breaking if
$(x_0,\lambda_0)$ is not a~bifurcation point of nontrivial
stationary solutions. However, Corollary~\ref{necniel} does not
exclude bifurcation of solutions of~\eqref{parhamniel} with the
minimal period $\frac{2\pi}{3}$ from $(x_0,\lambda_0),$ since
$\displaystyle \lambda_0\in\Lambda_6
=\Lambda_{2\cdot 3}\subset\bigcup_{l\in\N}\Lambda_{l\cdot 3}.$
\end{remark}

\section{Dancer-Rybicki bifurcation theorem for
\mbox{$j$-so}lutions.}

In this section global bifurcation theorems for
\mbox{$j$-so}lutions of~\eqref{parhamniel} are proved
in the case of systems with one
parameter ($k=1$), i.e. it is assumed that
$H\in C^{2,0}(\R^{2n}\times\R,\R).$

Let $x_0\in\R^{2n}$ satisfy~\eqref{stationary} for $k=1$ and fix
$\lambda_0\in\R.$ Assume that for sufficiently small $\varepsilon>0$
and every $\lambda\in
[\lambda_0-\varepsilon,\lambda_0+\varepsilon]\backslash\set{\lambda_0}$
there exists a~neighbourhood $W\subset\R^{2n}\times\R$ of
$(x_0,\lambda)$ such that
$(\nabla_xH)^{-1}(\set{0})\cap W \subset\set{x_0}\times\R.$
Set
\begin{equation*}
\eta_0(x_0,\lambda_0)
=\ind{\nabla_x H(\cdot,\lambda_0+\varepsilon)}{x_0}
-\ind{\nabla_x H(\cdot,\lambda_0-\varepsilon)}{x_0}.
\end{equation*}
If $\lambda_0\in\R$ is not a~cluster point of the set
$\Lambda_j(\nabla_x^2H(x_0,\cdot))$ for some $j\in\N$ then one can
choose  $\varepsilon>0$ such that
$\Lambda_j(\nabla_x^2H(x_0,\cdot)) \cap
[\lambda_0-\varepsilon,\lambda_0+\varepsilon]=\set{\lambda_0}$ and
set
\begin{align*}
\eta_j(x_0,\lambda_0) &
=\ind{\nabla_x H(\cdot,\lambda_0+\varepsilon)}{x_0}
\cdot
\frac{\morse{Q_j(\nabla^2_xH(x_0,\lambda_0+\varepsilon))}}{2} \\
&\quad -\ind{\nabla_x H(\cdot,\lambda_0-\varepsilon)}{x_0}
\cdot
\frac{\morse{Q_j(\nabla^2_xH(x_0,\lambda_0-\varepsilon))}}{2}.
\end{align*}
The sequence
\begin{equation*}
\eta(x_0,\lambda_0)=\set{\eta_j(x_0,\lambda_0)}_{j=0}^{\infty}
\end{equation*}
is called a~\emph{bifurcation index of $(x_0,\lambda_0)$}. Usually
only selected coordinates of this index are needed. Notice that
infinitely many of them may be nonzero. However, according to
Theorem~\ref{finitedim} and Remark~\ref{mod} there exists $r_0\in\N$
such that
$\eta_j(x_0,\lambda_0)=\eta_0(x_0,\lambda_0)\cdot n$
for $j>r_0.$
In the proof of Theorem~\ref{genbif} some coordinates of
$\eta(x_0,\lambda_0)$ will be identified with coordinates of the
index $\IND(x_0,\lambda_0)$ defined by~\eqref{abstrind} for an
appropriate mapping $\nabla_xf.$

Consider first the case of system~\eqref{parham} with linear
dependence on parameter, which can be written in
form~\eqref{parhamniel} for $H$ replaced by
$\hat{H}\in C^2(\R^{2n}\times\R,\R)$ defined by
\begin{equation*}
\hat{H}(x,\lambda)=\lambda H(x).
\end{equation*}
To define $\eta$ in this case it suffices to assume that $x_0$ is an
isolated element of $(\nabla H)^{-1}(\set{0}).$
(For every $a,b\in\R,$ $a<b,$ the set
$\Lambda(\nabla_x^2\hat{H}(x_0,\cdot))\cap [a,b]$ is finite.)
Then
\begin{align*}
\eta_0(x_0,\lambda_0) &= 0, \\
\eta_j(x_0,\lambda_0) &=  \ind{\nabla H}{x_0}\cdot
\left(\frac{\morse{Q_j((\lambda_0+\varepsilon)\nabla^2H(x_0))}}{2}
\right. \\
&\qquad\qquad\qquad\quad \left.
-\frac{\morse{Q_j((\lambda_0-\varepsilon)\nabla^2H(x_0))}}{2}\right)
\end{align*}
for every $j\in\N,$ as in~\cite{DR}. Notice that
$\eta_j(x_0,\lambda_0)=0$ for every $j>r_0,$
hence $\eta(x_0,\lambda_0)\in U(\SO(2)).$

As it was proved in~\cite{DR}, for every $K>0$ there exists
$\delta>0$ such that every solution $(x,\lambda)\in\hilb\times\R$
of~\eqref{parham} satisfying the conditions
$\abs{\lambda}\leq\delta$ and $\norm{x}_{0}\leq K$ is stationary.
(In particular, $(x_0,0)$ is not a~bifurcation point of
nonstationary solutions of~\eqref{parham}.) Thus it suffices to
consider the solutions of~\eqref{parham} for $\lambda>0.$

The set $\cT=(\nabla H)^{-1}(\set{0})\times\halfline$ is
regarded as the set of trivial solutions of~\eqref{parham}
and nontrivial solutions are the nonstationary ones.
If $(x_0,\lambda_0)$ is
a~global bifurcation point of nonstationary solutions
of~\eqref{parham} then
$C(x_0,\lambda_0)$ denotes the connected component of the
closure of the set of nonstationary solutions of~\eqref{parham}
containing $(x_0,\lambda_0).$

The following Rabinowitz-type global bifurcation theorem for
Hamiltonian systems has been proved by Dancer and Rybicki~\cite{DR}.

\begin{theorem}{\label{globalprel}}
Let $H\in C^2(\R^{2n},\R)$ and let $(\nabla H)^{-1}(\set{0})$ be
finite. Fix $x_0\in(\nabla H)^{-1}(\set{0})$ and
$\lambda_0\in\Lambda^+(\nabla_x^2\hat{H}(x_0,\cdot)).$ If
$\eta(x_0,\lambda_0)\neq\Theta$ then $(x_0,\lambda_0)$ is a~global
bifurcation point of nonstationary solutions of~\eqref{parham}.
Moreover, if the set $C(x_0,\lambda_0)$ is bounded in
$\hilb\times\halfline$ then
$C(x_0,\lambda_0)\cap\cT=\set{y_1,\ldots,y_m}$ for some $m\in\N,$
$y_1,\ldots,y_m\in\cT$ such that
\begin{equation*}
\sum_{i=1}^{m}\eta(y_i)=\Theta.
\end{equation*}
\end{theorem}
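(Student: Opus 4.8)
The plan is to transport the problem to the finite-dimensional equivariant setting furnished by the Amann--Zehnder reduction and then to run the classical Rabinowitz-type argument, with the $\SO(2)$-degree $\DEG$ playing the role of the Brouwer degree.

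First I would fix a bounded $\SO(2)$-invariant open set $U\subset\hilb\times\halfline$ containing the compact part under study, use Remark~\ref{mod} to replace $\hat H(x,\lambda)=\lambda H(x)$ by a compactly supported $H_1$ having the same solutions inside $U,$ and apply Theorem~\ref{finitedim} to obtain the $\SO(2)$-equivariant gradient $\nabla_x a$ on $E_f\times\halfline$ together with the $\SO(2)$-homeomorphism $h$ from $(\nabla_x a)^{-1}(\set{0})$ onto the solution set. Since $h$ is equivariant it carries connected components to connected components and trivial solutions to trivial solutions, so it suffices to prove the statement for the zero set of $\nabla_x a.$ At this point I would record the identity $\eta(x_0,\lambda_0)=\IND(x_0,\lambda_0)$ for $\nabla_x a$: by Lemma~\ref{abstrlem} one has $\I_j(\nabla_x a(\cdot,\lambda),x_0)=\ind{\nabla H}{x_0}\cdot\frac{1}{2}\morse{Q_j(\lambda\nabla^2H(x_0))}$ whenever $Q_j$ is nonsingular, so subtracting the values at $\lambda_0\pm\varepsilon$ in~\eqref{abstrind} reproduces exactly the coordinates $\eta_j$ defined above.

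For the local assertion I would argue by contradiction. Were $(x_0,\lambda_0)$ not a bifurcation point, then for small $\varepsilon,\delta>0$ the only zero of $\nabla_x a(\cdot,\lambda)$ in $\cl{B(x_0,\delta)}$ for every $\lambda\in[\lambda_0-\varepsilon,\lambda_0+\varepsilon]$ would be $x_0,$ so $\nabla_x a(\cdot,\lambda)$ would be nonvanishing on $\partial B(x_0,\delta)$ throughout this interval. Homotopy invariance of $\DEG$ along the parameter then forces the indices at $\lambda_0+\varepsilon$ and $\lambda_0-\varepsilon$ to coincide, i.e. $\IND(x_0,\lambda_0)=\Theta,$ contradicting $\eta(x_0,\lambda_0)\neq\Theta.$ Hence $(x_0,\lambda_0)$ is a bifurcation point, and the global alternative will follow from the index-sum formula below: if the component $C(x_0,\lambda_0)$ were bounded with $(x_0,\lambda_0)$ its only point of $\cT,$ that formula would give $\eta(x_0,\lambda_0)=\Theta,$ a contradiction, so $C(x_0,\lambda_0)$ must be unbounded or meet $\cT$ at a further bifurcation point.

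It remains to prove the index-sum formula, for which I would use the Whyburn separation argument. Let $S$ be the closure of the nonstationary solutions, $C=C(x_0,\lambda_0),$ and suppose $C$ is bounded; by Lemma~\ref{compbranch} it is then compact. The set $C\cap\cT$ lies in the finite union of rays $(\nabla H)^{-1}(\set{0})\times\halfline,$ only the indices $j\le r_0$ can contribute by conclusion~\eqref{amzlemmor} of Theorem~\ref{finitedim}, and for each such $j$ and each zero $x_i$ the equation $\det Q_j(\lambda\nabla^2H(x_i))=0$ is polynomial in the bounded variable $\lambda$ and so has finitely many roots; hence $C\cap\cT=\set{y_1,\dots,y_m}.$ Because $C$ is a compact component of $S,$ it can be enclosed in an $\SO(2)$-invariant bounded open $\Omega$ with $\partial\Omega\cap S=\emptyset.$ Using additivity and homotopy invariance of $\DEG$ along $\lambda$ on this solution-free boundary, the degree contributions telescope along the trivial rays to $\sum_{i=1}^m\IND(y_i)$ and must vanish; since $\IND(y_i)=\eta(y_i)$ by the first step, this yields $\sum_{i=1}^m\eta(y_i)=\Theta.$

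The main obstacle is this last global step: producing the isolating neighbourhood $\Omega$ with solution-free boundary and converting the component-level topology into a genuine degree computation. It rests on a Whyburn-type separation lemma together with the compactness supplied by Lemma~\ref{compbranch}, and on careful bookkeeping with additivity and homotopy invariance of the equivariant degree so as to show simultaneously that the total boundary degree vanishes and that it equals $\sum_i\eta(y_i).$ By contrast, the reduction and the local step are routine once the identity $\eta=\IND$ is in hand.
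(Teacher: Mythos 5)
Your proposal is correct and is essentially the argument the paper relies on: the paper quotes this theorem from~\cite{DR} and proves its generalizations (Theorems~\ref{genbif} and~\ref{global}) by exactly your scheme --- truncation via Remark~\ref{mod}, the Amann--Zehnder reduction of Theorem~\ref{finitedim}, the identification $\eta(x_0,\lambda_0)=\IND(x_0,\lambda_0)$ through Lemma~\ref{abstrlem}, and a Whyburn-type degree argument (Theorem~\ref{abstrth}) with compactness supplied by Lemma~\ref{compbranch}. The only point you pass over silently is that a bounded branch stays away from $\lambda=0$ (so that it is genuinely compact in $\hilb\times\halfline$), which is exactly the quoted Dancer--Rybicki observation that solutions with $\abs{\lambda}$ small and bounded norm are stationary.
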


In this section generalized versions of Theorem~\ref{globalprel}
concerning \mbox{$j$-solu}tions (for systems with nonlinear
dependence on parameter) are proved. To this aim, the method
presented in~\cite{DR} is applied to the restriction of the mapping
$\nabla_xa$ from Theorem~\ref{finitedim} to the subspace of fixed
points of the action of the group $\K_j$ for given
$j\in\N\cup\set{0}.$

Consider an orthogonal representation of the group $\SO(2)$ on
a~real inner product space $V$ with $\dim V<\infty$ and let
$\nabla_x f\colon V\times\R\rightarrow V$ be a~continuous
\mbox{$\SO(2)$-equi}variant gradient mapping. Let
$\Delta\times\R\subset (\nabla_x f)^{-1}(\set{0})$ be the set of
trivial solutions of the equation
\begin{equation}{\label{abstreq}}
\nabla_x f(x,\lambda)=0
\end{equation}
for some finite set $\Delta\subset V.$
If $(x_0,\lambda_0)\in \Delta\times\R$ is a~branching point of
nontrivial solutions of~\eqref{abstreq} then
$\Sigma(x_0,\lambda_0)$ denotes the connected component of the
closure of the set of nontrivial solutions of~\eqref{abstreq}
containing $(x_0,\lambda_0).$

The following theorem is a~slightly modified version of Theorem~2.2
formulated in~\cite{DR}. It will be used in the proof
of Theorem~\ref{genbif}.

\begin{theorem}{\label{abstrth}}
Let $\Omega$ be a~bounded open subset of $V\times\R.$ Assume that
$(\Delta\times\R)\cap \Omega$ contains at most finite number of
bifurcation points of nontrivial solutions of~\eqref{abstreq} and
$\partial\Omega$ contains no bifurcation points. If
$\IND(x_0,\lambda_0)\neq \Theta$ for some
$(x_0,\lambda_0)\in (\Delta\times\R)\cap \Omega$
then $(x_0,\lambda_0)$ is a~branching
point of nontrivial solutions of~\eqref{abstreq}. Moreover, if
$\Sigma(x_0,\lambda_0)\cap \partial \Omega=\emptyset$ then
$\Sigma(x_0,\lambda_0)\cap(\Delta\times\R)\cap\Omega
=\set{z_1,\ldots,z_m}$
for some $m\in\N,$ $z_1,\ldots,z_m\in\Delta\times\R$ such that
\begin{equation*}
\sum_{i=1}^{m}\IND(z_i)=\Theta.
\end{equation*}
\end{theorem}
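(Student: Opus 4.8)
Theorem~\ref{abstrth} is a Rabinowitz-type global bifurcation statement, and the plan is to follow the classical degree-theoretic argument of Rabinowitz, adapted to the $\SO(2)$-equivariant degree $\DEG$ introduced in Section~\ref{degree}. The underlying principle is that the bifurcation index $\IND(x_0,\lambda_0)$, defined in~\eqref{abstrind} as a difference of two $\SO(2)$-indices, records a jump in the $\SO(2)$-degree of $\nabla_x f(\cdot,\lambda)$ as $\lambda$ crosses $\lambda_0$; a nonzero jump forces the local branch to be nontrivial and, via a covering/connectedness argument, to be global in the sense of Definition~\ref{genbifdef}. Since $V$ is finite-dimensional and the trivial solution set $\Delta\times\R$ is discrete in the $x$-variable, the hypotheses make the bifurcation points isolated, so the index $\IND$ is well defined at each of them.

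First I would reduce to the case $\Delta=\set{x_0}$ by working locally, and set up the standard Rabinowitz dichotomy. Let $\Sigma=\Sigma(x_0,\lambda_0)$ be the connected component of the closure of nontrivial solutions containing $(x_0,\lambda_0)$, and suppose for contradiction that $(x_0,\lambda_0)$ is \emph{not} a branching point, i.e.\ $\Sigma=\set{(x_0,\lambda_0)}$. Then in a small ball around $(x_0,\lambda_0)$ there are no nontrivial solutions, so $\nabla_x f(\cdot,\lambda_0\pm\varepsilon)$ is nonzero on a whole neighborhood of $x_0$ except at $x_0$ itself; by the homotopy invariance of $\DEG$ across $\lambda\in[\lambda_0-\varepsilon,\lambda_0+\varepsilon]$ the two $\SO(2)$-indices $\I(\nabla_x f(\cdot,\lambda_0+\varepsilon),x_0)$ and $\I(\nabla_x f(\cdot,\lambda_0-\varepsilon),x_0)$ would be equal, forcing $\IND(x_0,\lambda_0)=\Theta$, contrary to hypothesis. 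This establishes the first (local) assertion.

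For the global part I would argue, again by contradiction, that if $\Sigma(x_0,\lambda_0)\cap\partial\Omega=\emptyset$ then $\Sigma$ is a compact set contained in $\Omega$ meeting $\Delta\times\R$ in finitely many points $z_1,\dots,z_m$ (finiteness comes from the hypothesis that $(\Delta\times\R)\cap\Omega$ contains only finitely many bifurcation points, together with the fact that any $z_i$ on $\Sigma$ must itself be a bifurcation point). The standard Whyburn-type separation lemma produces a bounded open $\SO(2)$-invariant set $\Omega'$ with $\Sigma\subset\Omega'\subset\overline{\Omega'}\subset\Omega$ whose boundary $\partial\Omega'$ contains no solutions of~\eqref{abstreq} other than possibly trivial ones away from bifurcation points. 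I would then compute the total $\SO(2)$-degree $\DEG(\nabla_x f(\cdot,\lambda),\Omega'_\lambda)$ on the slices $\lambda=\lambda_0\pm\varepsilon$ below and above all the $z_i$; by the excision and additivity properties of $\DEG$ the difference of these slice-degrees equals $\sum_{i=1}^m\IND(z_i)$. On the other hand, because $\Sigma$ is an isolated compact component disjoint from $\partial\Omega'$, a connectedness (generalized homotopy) argument shows this difference must vanish, yielding $\sum_{i=1}^m\IND(z_i)=\Theta$.

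The main obstacle is the bookkeeping in the global step: one must ensure that the separating set $\Omega'$ can be chosen $\SO(2)$-invariant and with boundary free of bifurcation points, so that the equivariant degree $\DEG$ (valued in the Euler ring $U(\SO(2))$) is defined on every relevant slice and obeys additivity coordinatewise. Since this is explicitly stated to be only ``a slightly modified version of Theorem~2.2 in~\cite{DR},'' I would import the equivariant Whyburn lemma and the homotopy-invariance of $\DEG$ from~\cite{DR,Ry} wholesale, and the proof reduces to checking that the modified hypotheses (a bounded open $\Omega$ with finitely many interior bifurcation points and none on $\partial\Omega$) still permit the same degree computation; the crossing formula relating $\IND$ to the jump in $\DEG$ is exactly~\eqref{abstrind}, so no new analysis beyond the cited machinery is required.
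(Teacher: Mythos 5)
Your proposal follows exactly the route the paper takes: the paper's own proof is only the remark that the argument proceeds as in Theorem~29.1 of Deimling (Whyburn lemma plus standard properties of topological degree), with the Brouwer degree replaced by $\DEG$ and the test sets made \mbox{$\SO(2)$-in}variant by passing from $D$ to $\SO(2)D$, which leaves the zero set unchanged by equivariance of $\nabla_x f$ --- precisely the bookkeeping issue you identify and resolve by importing the machinery from~\cite{DR,Ry}. One small caution: in your local step, the implication ``$\Sigma=\set{(x_0,\lambda_0)}$ implies no nontrivial solutions in a small ball'' is not immediate (a point that is not a branching point can still be a cluster point of nontrivial solutions), and the correct deduction runs through the same Whyburn separation you invoke for the global part, using local compactness of the solution set in the finite-dimensional space $V\times\R$.
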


The proof of the above theorem proceeds analogously to that of
Theorem~29.1 in~\cite{Dei}. (It is based on Whyburn lemma and
standard properties of topological degree.) The Brouwer degree
($\dim V<\infty$) is replaced by the degree $\DEG$ in this case.
To guarantee that sets over which the degree
$\DEG$ is computed are \mbox{$\SO(2)$-equi}variant it suffices to
observe that if $D\subset V\times\R$ then the set
$\SO(2)D\eqdf\set{gv\cond g\in\SO(2),v\in D}$ is
\mbox{$\SO(2)$-equi}variant and
$(\nabla_x f)^{-1}(\set{0})\cap D
=(\nabla_x f)^{-1}(\set{0})\cap \SO(2)D,$
since the mapping $\nabla_x f$ is \mbox{$\SO(2)$-equi}variant.

Assume that $H\in C^{2,0}(\R^{2n}\times\R,\R)$ and
let $\Delta\times\R\subset(\nabla_x H)^{-1}(\set{0})$ be the
set of trivial solutions of~\eqref{parhamniel} for some finite
set $\Delta\subset\R^{2n}.$ (Notice that some
nontrivial solutions can be stationary.) Set
\begin{equation*}
P_j(\Delta)= \bigcup_{x_0\in\Delta}
\left(\set{x_0}\times
\bigcup_{l\in\N}\Lambda_{lj}(\nabla_x^2H(x_0,\cdot))\right)
\end{equation*}
for $j\in\N\cup\set{0}.$ For a~fixed bounded open set
$U\subset\hilb\times\R$ use will be made of the following condition.
\begin{trivlist}
\item[\textbf{(N)}] For every
           $(x,\lambda)\in(\Delta\times\R)\cap\cl{U}\backslash
           P_j(\Delta)$
           there exists its neighbourhood
           $W\subset\R^{2n}\times\R$ such that
           $(\nabla_xH)^{-1}(\set{0})\cap W
           \subset\Delta\times\R.$
\end{trivlist}
If $(x_0,\lambda_0)\in P_j(\Delta)$ is a~branching point
of nontrivial \mbox{$j$-solu}tions of~\eqref{parhamniel} for some
$j\in\N\cup\set{0},$ then $K_j(x_0,\lambda_0)$ denotes
the connected component of the closure of the set of nontrivial
(possibly stationary) \mbox{$j$-solu}tions containing
$(x_0,\lambda_0).$

\begin{theorem}{\label{genbif}}
Let $H\in C^{2,0}(\R^{2n}\times\R,\R).$
Fix $j\in\N\cup\set{0}$ and let $U$ be a~bounded open subset of
$\hilb\times\R.$ Assume that the set $P_j(\Delta)\cap U$ is finite,
$P_j(\Delta)\cap \partial U=\emptyset,$ and condition (N) is
satisfied. If $\eta_{j}(x_0,\lambda_0)\neq 0$ for some
$(x_0,\lambda_0)\in P_j(\Delta)\cap U$ then $(x_0,\lambda_0)$ is
a~branching point of nontrivial (possibly stationary)
\mbox{$j$-solu}tions of~\eqref{parhamniel}. Moreover, if
$K_j(x_0,\lambda_0)\cap \partial U =\emptyset$ then
$K_j(x_0,\lambda_0)\cap(\Delta\times\R)\cap U=\set{z_1,\ldots,z_m}$
for some $m\in\N,$ $z_1,\ldots,z_m\in\Delta\times\R$ such that
\begin{equation*}
\sum_{i=1}^{m}\eta_{lj}(z_i)=0 \qquad
\text{for every $l\in\N\cup\set{0}.$}
\end{equation*}
\end{theorem}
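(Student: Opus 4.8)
The plan is to reduce Theorem~\ref{genbif} about the infinite-dimensional problem~\eqref{parhamniel} to the finite-dimensional abstract result Theorem~\ref{abstrth} by passing to the $\K_j$-fixed-point subspace of the reduced gradient mapping $\nabla_x a$ furnished by Theorem~\ref{finitedim}.

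Let me sketch the key steps.

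First, use Remark~\ref{mod} to replace $H$ by a compactly supported $H_1$ on the bounded set $U$, so that all growth issues disappear and the solution set on $\cl U$ is unchanged. Apply Theorem~\ref{finitedim} to get $r_0$, the finite-dimensional representation $E_f$, and the $\SO(2)$-equivariant gradient $\nabla_x a$ on $E_f \times \R$ together with the equivariant homeomorphism $h$ from $(\nabla_x a)^{-1}(\{0\})$ onto the solution set $\cR(H_1)$ of~\eqref{parhammod}. Under $h$, the trivial solutions $\Delta \times \R$ correspond to themselves (by conclusion (2) of Theorem~\ref{finitedim}, $\nabla_x a(x_0,\lambda)=0$ iff $\nabla_x H_1(x_0,\lambda)=0$), and the nontrivial-solution set of~\eqref{parhamniel} corresponds to that of~\eqref{abstreq}. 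The essential structural fact is that conclusion (3) of Theorem~\ref{finitedim} gives the block-diagonal Hessian $\nabla_x^2 a(x_0,\lambda)=\diag{Q_0(\nabla_x^2 H_1(x_0,\lambda)),\ldots,Q_{r_0}(\nabla_x^2 H_1(x_0,\lambda))}$, so that each $Q_l$ lives on the isotypical piece $E_l \cong \R[2n,l]$.

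Second, restrict to the fixed-point space of $\K_j$. For $j\in\N$ this is $(\E_f)^{\Z_j}=\bigoplus_{l: lj\le r_0} E_{lj}$, and for $j=0$ it is $E_0$ itself. The $\K_j$-solutions of~\eqref{parhamniel} correspond exactly to zeros of the restricted mapping $\nabla_x a^{\K_j}$ on this $\SO(2)$-invariant subspace $V$, and $P_j(\Delta)$ is precisely the set of parameter values where $\det Q_{lj}(\nabla_x^2 H_1(x_0,\cdot))=0$ for some $l$, i.e. where the restricted Hessian degenerates. I would then identify the bifurcation index: apply Lemma~\ref{abstrlem} to $\nabla_x a^{\K_j}(\cdot,\lambda_0\pm\varepsilon)$ to compute the local indices $\I(\nabla_x a^{\K_j}(\cdot,\lambda_0\pm\varepsilon),x_0)$ coordinate by coordinate, and hence via~\eqref{abstrind} show that the $\K_j$-bifurcation index $\IND(x_0,\lambda_0)$ of the restricted problem has coordinates given exactly by $\eta_{lj}(x_0,\lambda_0)$. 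Concretely, the $l$-th coordinate of $\IND(x_0,\lambda_0)$ equals
\begin{equation*}
\ind{\nabla H_1}{x_0}\cdot\frac{\morse{Q_{lj}(\nabla_x^2 H_1(x_0,\lambda_0+\varepsilon))}-\morse{Q_{lj}(\nabla_x^2 H_1(x_0,\lambda_0-\varepsilon))}}{2},
\end{equation*}
which is $\eta_{lj}(x_0,\lambda_0)$ by definition. In particular, the hypothesis $\eta_j(x_0,\lambda_0)\neq 0$ means the relevant coordinate of $\IND(x_0,\lambda_0)$ is nonzero, so $\IND(x_0,\lambda_0)\neq\Theta$.

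Third, verify that the abstract hypotheses of Theorem~\ref{abstrth} hold: take $\Omega = h^{-1}(U)\cap(V\times\R)$ (an $\SO(2)$-invariant bounded open set, using that $h$ is an equivariant homeomorphism and that $U$ may be enlarged to an invariant set without changing the solution set, as in the remark following Theorem~\ref{abstrth}); condition (N) together with the finiteness of $P_j(\Delta)\cap U$ guarantees that $(\Delta\times\R)\cap\Omega$ contains only finitely many bifurcation points and that $\partial\Omega$ contains none. Then Theorem~\ref{abstrth} yields that $(x_0,\lambda_0)$ is a branching point of the restricted nontrivial solutions, which transport back under $h$ to nontrivial $\K_j$-solutions, i.e. nontrivial (possibly stationary) $j$-solutions of~\eqref{parhamniel}. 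For the second assertion, if $K_j(x_0,\lambda_0)\cap\partial U=\emptyset$ then $\Sigma(x_0,\lambda_0)\cap\partial\Omega=\emptyset$, so the summation formula $\sum_i \IND(z_i)=\Theta$ holds; reading off the $l$-th coordinate gives $\sum_{i=1}^m \eta_{lj}(z_i)=0$ for every $l\in\N\cup\{0\}$, as claimed.

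The main obstacle I anticipate is the bookkeeping in the second step: one must carefully match the $\K_j$-fixed-point decomposition $V=\bigoplus_l E_{lj}$ against the block structure of $\nabla_x^2 a$ so that the Morse-index contribution of the block $Q_{lj}$ lands in exactly the $l$-th coordinate of the $\SO(2)$-degree, and verify that Lemma~\ref{abstrlem} applies to the restricted mapping (checking that the relevant blocks $Q_{lj}(\nabla_x^2 H_1(x_0,\lambda_0\pm\varepsilon))$ are nonsingular for the chosen $\varepsilon$, which follows from $\lambda_0\pm\varepsilon\notin\Lambda_{lj}$ and conclusion~\eqref{amzlemmor} of Theorem~\ref{finitedim} for the high modes). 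The remaining subtlety is ensuring the homeomorphism $h$ and the passage from $U$ to $\Omega$ genuinely preserve boundedness, connected components, and the boundary condition, which is where Lemma~\ref{compbranch} and the equivariance of $h$ are used.
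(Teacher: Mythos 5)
Your route is the paper's route --- modify $H$ via Remark~\ref{mod}, reduce by Theorem~\ref{finitedim}, restrict $\nabla_x a$ to $V=(E_f)^{\K_j}$, identify the bifurcation index through Lemma~\ref{abstrlem}, and feed everything into Theorem~\ref{abstrth} with Lemma~\ref{compbranch} supplying compactness --- but three details need repair. First, your ``concrete'' index formula factors out a single common index $\ind{\nabla H_1}{x_0}$; that is only valid when the stationary index does not jump at $\lambda_0$. In the generality of Theorem~\ref{genbif}, where $\eta_0(x_0,\lambda_0)$ may be nonzero (exactly the ``possibly stationary'' case), one must keep the two-sided definition of $\eta_k$, with $\ind{\nabla_xH(\cdot,\lambda_0+\varepsilon)}{x_0}$ and $\ind{\nabla_xH(\cdot,\lambda_0-\varepsilon)}{x_0}$ multiplying the respective half Morse indices; the paper's identification is $\IND_k(x_0,\lambda_0)=\eta_k(x_0,\lambda_0)$ for $k=lj\leq r_0$ and $\IND_k(x_0,\lambda_0)=0$ otherwise. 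Note also that the contribution of the block $Q_{lj}$ lands in coordinate $lj$ of $U(\SO(2))$, not coordinate $l$: Lemma~\ref{abstrlem} is applied to $V=E_0\oplus E_j\oplus E_{2j}\oplus\cdots$ with $j_i=ij$, and the degree of the restricted map is still graded by the original $\SO(2)$-weights. This is precisely the bookkeeping you flagged as the anticipated obstacle, and your resolution of it is off by the reindexing.

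Second, your choice $\Omega=h^{-1}(U)\cap(V\times\R)$ does not work as written: $h$ is a homeomorphism only between the zero sets $(\nabla_x a)^{-1}(\set{0})$ and $\cR(H_1)$, not a map defined on $E_f\times\R$, so $h^{-1}(U)$ is a subset of the zero set and is not open in $V\times\R$; Theorem~\ref{abstrth} requires a genuinely open bounded $\Omega$. The paper instead argues: $K_j(x_0,\lambda_0)\cap\partial U=\emptyset$ together with Lemma~\ref{compbranch} makes $K_j(x_0,\lambda_0)$ compact, hence $\Sigma(x_0,\lambda_0)\eqdf h^{-1}(K_j(x_0,\lambda_0))$ is compact, and one then \emph{chooses} a bounded open $\Omega\subset V\times\R$ containing $\Sigma(x_0,\lambda_0)$ with finitely many bifurcation points inside and none on $\partial\Omega$ (finiteness coming from condition (N) and Corollary~\ref{necniel}, as you say). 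Third, you never treat $j>r_0$, where $V=(E_f)^{\K_j}=E_0$ and no coordinate $lj$ with $l\in\N$ survives the truncation: there the paper uses $\eta_j(x_0,\lambda_0)=\eta_0(x_0,\lambda_0)\cdot n=\IND_0(x_0,\lambda_0)\cdot n$, so $\eta_j\neq 0$ forces $\IND_0\neq\nobreak 0$ and the branch consists of stationary solutions (which are $j$-solutions for every $j$); the same relation, applied at the points $z_i$, is what yields $\sum_{i=1}^{m}\eta_{lj}(z_i)=0$ for those $l$ with $lj>r_0$ from $\sum_{i=1}^{m}\IND_0(z_i)=0$, since for such $l$ one cannot simply ``read off'' a coordinate of the abstract sum formula. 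All three fixes use only ingredients you already cite, so the proposal is reparable without new ideas.
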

\begin{proof}
$H$ can be replaced by $H_1$ from Remark~\ref{mod}. One has
$(\Delta\times\R)\cap\cl{U}\subset B(0,\eta)$ and the functions $H$
and $H_1$ are equal on $B(0,\eta).$ The solutions
of~\eqref{parhamniel} in $\cl{U}$ are those of~\eqref{parhammod}.
In view of Theorem~\ref{finitedim}, $(x_0,\lambda_0)$ is a~branching
point of nontrivial \mbox{$j$-solu}tions of~\eqref{parhammod} iff it
is a~branching point of nontrivial \mbox{$j$-solu}tions of the
equation $\nabla_x a(x,\lambda)=0$ in the space $E_f\times\R,$ which
means that $(x_0,\lambda_0)$ is a~branching point of nontrivial
solutions of~\eqref{abstreq} in the space $V\times\R,$ where
$\displaystyle V
=(E_f)^{\K_j}=E_f\cap\bigoplus_{l=0}^{\infty}E_{lj}$
and
$\nabla_x f=(\nabla_x a)|_{(V\times\R,V)}.$
($E_f$ can be regarded as a~subspace of $\hilb.$) Notice that the
only solutions of~\eqref{abstreq} in $V\times\R$ are then
\mbox{$j$-solu}tions. The set of trivial solutions and bifurcation
points of \mbox{$j$-solu}tions remain the same as in the case
of~\eqref{parhammod}. Use will be made of Theorem~\ref{abstrth}.
According to Lemma~\ref{abstrlem}, for every
$k\in\N\cup\set{0}$ one has
\begin{equation*}
\IND_k(x_0,\lambda_0)=
\begin{cases}
\eta_k(x_0,\lambda_0) &
\text{if $k=lj\leq r_0$ for some $l\in\N\cup\set{0}$},\\
0 & \text{otherwise},
\end{cases}
\end{equation*}
where $\IND(x_0,\lambda_0)$ is the bifurcation index
\eqref{abstrind}.
Notice also that if $j>r_0$ then
\begin{equation*}
\eta_j(x_0,\lambda_0)=\eta_0(x_0,\lambda_0)\cdot n
=\IND_0(x_0,\lambda_0)\cdot n.
\end{equation*}
(In this case, in view of Theorem~\ref{abstrth},
$(x_0,\lambda_0)$ is a~branching point of nontrivial stationary
solutions, which are \mbox{$j$-solu}tions for every
$j\in\N\cup\set{0}$). Furthermore, in view of the assumptions and
Corollary~\ref{necniel}, $U$ contains at most finite number of
bifurcation points of nontrivial \mbox{$j$-solu}tions
of~\eqref{parhammod} and if $(x_0,\lambda_0)\in U$ is a~branching
point of nontrivial \mbox{$j$-solu}tions
of~\eqref{parhammod} such that
$K_j(x_0,\lambda_0)\cap\partial U=\emptyset$ then
$K_j(x_0,\lambda_0)$ is compact (see Lemma~\ref{compbranch}),
so also is
$\Sigma(x_0,\lambda_0)\eqdf h^{-1}(K_j(x_0,\lambda_0)),$
where $h$ is the homeomorphism from Theorem~\ref{finitedim}.
Thus one can find $\Omega\subset V\times\R$ satisfying the
assumptions of Theorem~\ref{abstrth} and the condition
$\Sigma(x_0,\lambda_0)\subset\Omega.$
\end{proof}

As a~consequence of Theorem~\ref{genbif} one obtains the next
two theorems that will be used in subsequent sections. In the first
one it is assumed that $(x_0,\lambda_0)$ is not a~bifurcation point
of nontrivial stationary solutions, whereas in the second one
such bifurcation is allowed but it is assumed that $\lambda_0$
is not a~cluster point of $\Lambda_0(\nabla_x^2H(x_0,\cdot)),$
which means that all the points from
$\cT(x_0)\backslash\set{(x_0,\lambda_0)}$
in a~neighbourhood of $(x_0,\lambda_0)$ are nondegenerate
(although $(x_0,\lambda_0)$ can be degenerate). In both cases
$\cT(x_0)=\set{x_0}\times\R$ is regarded as the set of trivial
solutions, i.e. $\Delta=\set{x_0}.$

\begin{theorem}{\label{genzdeg}}
Let $H\in C^{2,0}(\R^{2n}\times\R,\R)$ and
$\nabla_xH(x_0,\lambda)=0$ for some $x_0\in\R^{2n}$ and all
$\lambda\in\R.$ Assume that $\lambda_0$ is an isolated element of
the set
$\displaystyle
\bigcup_{l\in\N}\Lambda_{lj}(\nabla_x^2H(x_0,\cdot))$
for some
$j\in\N$ and $(x_0,\lambda_0)$ is not a~bifurcation point of
nontrivial stationary solutions of~\eqref{parhamniel}. If
$\eta_{j}(x_0,\lambda_0)\neq
0$ then $(x_0,\lambda_0)$ is a~branching point of
nonstationary \mbox{$j$-solu}tions of~\eqref{parhamniel}
and a~global bifurcation point of nontrivial \mbox{$j$-solu}tions.
\end{theorem}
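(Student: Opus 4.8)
The plan is to derive both assertions from Theorem~\ref{genbif} with $\Delta=\set{x_0}$, so that $P_j(\Delta)=\set{x_0}\times\bigcup_{l\in\N}\Lambda_{lj}(\nabla_x^2H(x_0,\cdot))$, applied to suitably chosen bounded open sets $U\subset\hilb\times\R$. Throughout I would first invoke Remark~\ref{mod} to replace $H$ by a~Hamiltonian $H_1$ of compact support on the relevant bounded region; this does not change the solution set there and, by Lemma~\ref{compbranch}, it makes every bounded subset of the solution set relatively compact, which is what powers the separation argument below.

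For the branching assertion I would take a~small bounded open neighbourhood $U_0$ of $(x_0,\lambda_0)$ with two properties guaranteed by hypothesis. Since $\lambda_0$ is isolated in $\bigcup_{l\in\N}\Lambda_{lj}(\nabla_x^2H(x_0,\cdot))$, one can arrange $P_j(\Delta)\cap\cl{U_0}=\set{(x_0,\lambda_0)}$, whence $P_j(\Delta)\cap U_0$ is finite and $P_j(\Delta)\cap\partial U_0=\emptyset$; and since $(x_0,\lambda_0)$ is not a~bifurcation point of nontrivial stationary solutions, one can shrink $U_0$ so that it contains no nontrivial stationary solution at all. The latter immediately yields condition (N) on $U_0$, because any zero of $\nabla_xH(\cdot,\cdot)$ in $\cl{U_0}$ lying off $\set{x_0}\times\R$ would be precisely such a~nontrivial stationary solution. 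Theorem~\ref{genbif} then applies and shows that $(x_0,\lambda_0)$ is a~branching point of nontrivial $j$-solutions; as $U_0$ carries no nontrivial stationary solution, the branch witnessing this consists of nonstationary solutions, which gives the first claim.

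For the global assertion I would argue by contradiction: suppose $(x_0,\lambda_0)$ is not a~global bifurcation point, so by Definition~\ref{genbifdef} the component $K_j(x_0,\lambda_0)$ of the closure of the nontrivial $j$-solutions is bounded and contains no bifurcation point other than $(x_0,\lambda_0)$. By the reduction above $K_j(x_0,\lambda_0)$ is then compact. Every trivial solution lying in it is a~cluster point of the nontrivial $j$-solutions, hence a~bifurcation point, so $K_j(x_0,\lambda_0)\cap\cT(x_0)=\set{(x_0,\lambda_0)}$. A~Whyburn-type separation (exactly as in the proof of Theorem~\ref{abstrth}, cf.\ Theorem~29.1 in~\cite{Dei}) then produces a~bounded open $U\supset K_j(x_0,\lambda_0)$ whose boundary misses the closure of the nontrivial $j$-solutions, so that $K_j(x_0,\lambda_0)\cap\partial U=\emptyset$. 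Tapering $U$ to a~thin slice around $(x_0,\lambda_0)$ along $\set{x_0}\times\R$, and using once more the isolation of $\lambda_0$ in $\bigcup_{l\in\N}\Lambda_{lj}$ together with the absence of nontrivial stationary solutions near $(x_0,\lambda_0)$, I would arrange $P_j(\Delta)\cap U=\set{(x_0,\lambda_0)}$, $P_j(\Delta)\cap\partial U=\emptyset$, and condition (N).

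Applying the ``Moreover'' part of Theorem~\ref{genbif} to this $U$ gives $K_j(x_0,\lambda_0)\cap(\set{x_0}\times\R)\cap U=\set{z_1,\ldots,z_m}$ with $\sum_{i=1}^m\eta_{lj}(z_i)=0$ for every $l\in\N\cup\set{0}$. But we have already seen that $K_j(x_0,\lambda_0)\cap(\set{x_0}\times\R)=\set{(x_0,\lambda_0)}$, so $m=1$ and $z_1=(x_0,\lambda_0)$; taking $l=1$ forces $\eta_j(x_0,\lambda_0)=0$, contradicting $\eta_j(x_0,\lambda_0)\neq 0$. Hence $K_j(x_0,\lambda_0)$ is unbounded or contains a~second bifurcation point, i.e.\ $(x_0,\lambda_0)$ is a~global bifurcation point of nontrivial $j$-solutions. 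The delicate step is the construction of $U$: one must simultaneously separate the compact component $K_j(x_0,\lambda_0)$ from the rest of the solution set and keep the trivial-branch slice of $U$ so thin that $(x_0,\lambda_0)$ is the only point of $P_j(\Delta)$ it captures, which is exactly where the isolation hypothesis and the Whyburn lemma do the real work.
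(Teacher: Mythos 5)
Your proposal is correct and follows exactly the route the paper intends: the paper states Theorem~\ref{genzdeg} without separate proof, as a direct consequence of Theorem~\ref{genbif} with $\Delta=\set{x_0}$ (so that $P_j(\Delta)=\set{x_0}\times\bigcup_{l\in\N}\Lambda_{lj}(\nabla_x^2H(x_0,\cdot))$), and your argument is precisely that derivation, with the supporting details — isolation of $\lambda_0$ giving $P_j(\Delta)\cap\cl{U}=\set{(x_0,\lambda_0)}$, the no-stationary-bifurcation hypothesis giving condition (N) and upgrading ``possibly stationary'' to ``nonstationary'', and the compactness/Whyburn separation plus the ``Moreover'' sum formula forcing $\eta_j(x_0,\lambda_0)=0$ in the contradiction — correctly supplied.
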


\begin{theorem}{\label{genniezd}}
Let $H\in C^{2,0}(\R^{2n}\times\R,\R)$ and
$\nabla_xH(x_0,\lambda)=0$ for some $x_0\in\R^{2n}$ and all
$\lambda\in\R.$ Assume that $\lambda_0$ is an isolated element of
$\displaystyle\Lambda_0(\nabla_x^2H(x_0,\cdot))
\cup\bigcup_{l\in\N}\Lambda_{lj}(\nabla_x^2H(x_0,\cdot))$ for some
$j\in\N\cup\set{0}.$ If $\eta_{j}(x_0,\lambda_0)\neq 0$ then
$(x_0,\lambda_0)$ is a~global bifurcation point of nontrivial
(possibly stationary) \mbox{$j$-solu}tions of~\eqref{parhamniel}.
\end{theorem}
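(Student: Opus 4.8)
The plan is to deduce Theorem~\ref{genniezd} from Theorem~\ref{genbif} by choosing the set $\Delta$, the integer $j$, and the bounded open set $U$ appropriately, and then to upgrade the branching conclusion to a global bifurcation statement. Since $\nabla_xH(x_0,\lambda)=0$ for all $\lambda$, I take $\Delta=\set{x_0}$, so that $\cT(x_0)=\set{x_0}\times\R$ is the set of trivial solutions. The hypothesis that $\lambda_0$ is isolated in $\Lambda_0(\nabla_x^2H(x_0,\cdot))\cup\bigcup_{l\in\N}\Lambda_{lj}(\nabla_x^2H(x_0,\cdot))$ means precisely that $P_j(\Delta)=\set{x_0}\times\bigcup_{l\in\N}\Lambda_{lj}(\nabla_x^2H(x_0,\cdot))$ together with the stationary locus $\set{x_0}\times\Lambda_0$ accumulates nowhere near $(x_0,\lambda_0)$ except at $(x_0,\lambda_0)$ itself.

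First I would fix $\varepsilon>0$ so small that the closed interval $[\lambda_0-\varepsilon,\lambda_0+\varepsilon]$ meets the above set only in $\set{\lambda_0}$; this both makes $\eta_j(x_0,\lambda_0)$ well defined and ensures, via Corollary~\ref{necniel}, that no point of $\cT(x_0)$ with $\lambda\in[\lambda_0-\varepsilon,\lambda_0+\varepsilon]\setminus\set{\lambda_0}$ is a bifurcation point of nontrivial $j$-solutions. In particular condition~(N) holds for the slab-type neighbourhood, because away from $\lambda_0$ the linearization $Q_{lj}(\nabla_x^2H(x_0,\lambda))$ is nonsingular for all $l$, so the only solutions near $\set{x_0}\times([\lambda_0-\varepsilon,\lambda_0+\varepsilon]\setminus\set{\lambda_0})$ are the trivial ones. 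Then I would choose a bounded open $U\subset\hilb\times\R$ containing $(x_0,\lambda_0)$ whose intersection with $\cT(x_0)$ lies in $\set{x_0}\times(\lambda_0-\varepsilon,\lambda_0+\varepsilon)$, so that $P_j(\Delta)\cap U=\set{(x_0,\lambda_0)}$ is finite and $P_j(\Delta)\cap\partial U=\emptyset$. With $\eta_j(x_0,\lambda_0)\neq0$ by assumption, Theorem~\ref{genbif} immediately yields that $(x_0,\lambda_0)$ is a branching point of nontrivial (possibly stationary) $j$-solutions.

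It remains to promote ``branching point'' to ``global bifurcation point'' in the sense of Definition~\ref{genbifdef}, i.e. to show that the connected component $K_j(x_0,\lambda_0)$ of the closure of the nontrivial $j$-solutions is either unbounded or meets $\cT(x_0)$ in a second bifurcation point. I would argue by contradiction: suppose $K_j(x_0,\lambda_0)$ is bounded and contains no further bifurcation point. Shrinking $U$ around the one bifurcation point $(x_0,\lambda_0)$ while enlarging it to contain the (bounded, hence by Lemma~\ref{compbranch} compact) component $K_j(x_0,\lambda_0)$, I can arrange $K_j(x_0,\lambda_0)\cap\partial U=\emptyset$. Then the second part of Theorem~\ref{genbif} gives $K_j(x_0,\lambda_0)\cap(\Delta\times\R)\cap U=\set{z_1,\dots,z_m}$ with $\sum_{i=1}^m\eta_{lj}(z_i)=0$ for every $l\in\N\cup\set{0}$; but each $z_i=(x_0,\lambda_i)\in\cT(x_0)$ is a bifurcation point lying on $K_j(x_0,\lambda_0)$, and by hypothesis $(x_0,\lambda_0)$ is the only such point, so $m=1$ and $z_1=(x_0,\lambda_0)$. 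Taking $l=1$ forces $\eta_j(x_0,\lambda_0)=0$, contradicting the assumption. Hence $K_j(x_0,\lambda_0)$ is unbounded or contains another bifurcation point, which is exactly global bifurcation.

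The main obstacle I anticipate is the bookkeeping needed to handle the stationary part $\Lambda_0$ alongside $\bigcup_l\Lambda_{lj}$ uniformly, and in particular to verify condition~(N) and the finiteness/isolation of bifurcation points carefully: one must check that for $\lambda$ near but distinct from $\lambda_0$ there are genuinely no nontrivial $j$-solutions, which relies on combining the nonsingularity of all relevant $Q_{lj}$ with the implicit-function reduction underlying Lemma~\ref{isotropy} and Corollary~\ref{necniel}. Once the topological-degree machinery of Theorem~\ref{genbif} is correctly invoked with these data, the contradiction argument closing off the bounded, single-bifurcation-point case is the routine Whyburn-type step, so the real work is in setting up $U$ and confirming the hypotheses of Theorem~\ref{genbif} rather than in any new estimate.
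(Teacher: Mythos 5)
Your route is exactly the one the paper intends: Theorem~\ref{genniezd} is stated there without a separate proof, derived ``as a consequence of Theorem~\ref{genbif}'' with $\Delta=\set{x_0}$, and your fleshing-out is the right one in its main case — the implicit-function argument giving condition (N) at the nondegenerate trivial points $\lambda\neq\lambda_0$, the small $U$ with $P_j(\Delta)\cap U=\set{(x_0,\lambda_0)}$ for the branching statement, and, for globality, the thin neighbourhood of the compact (Lemma~\ref{compbranch}) component $K_j(x_0,\lambda_0)$ whose trivial slice stays inside the isolation interval, followed by the Whyburn-type contradiction from $\sum_{i}\eta_{lj}(z_i)=0$ with $l=1$.

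There is, however, one genuine (though repairable) gap: you assume throughout that $(x_0,\lambda_0)\in P_j(\Delta)$, i.e.\ (for $j\in\N$) that $\lambda_0\in\bigcup_{l\in\N}\Lambda_{lj}(\nabla_x^2H(x_0,\cdot))$. The hypothesis only places $\lambda_0$ in the union with $\Lambda_0(\nabla_x^2H(x_0,\cdot))$, and the case $\lambda_0\in\Lambda_0\setminus\bigcup_{l}\Lambda_{lj}$ is not vacuous for precisely this theorem: unlike in Theorem~\ref{genzdeg}, stationary bifurcation at $\lambda_0$ is allowed here, and since all $Q_{lj}(\nabla_x^2H(x_0,\lambda))$ are then nonsingular near $\lambda_0$, the Morse index is locally constant and $\eta_j(x_0,\lambda_0)=\eta_0(x_0,\lambda_0)\cdot\frac{1}{2}\morse{Q_j(\nabla_x^2H(x_0,\lambda_0))}$, which can well be nonzero. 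In that situation your invocation of Theorem~\ref{genbif} is formally inapplicable (it requires $(x_0,\lambda_0)\in P_j(\Delta)\cap U$), and worse, condition (N) may genuinely fail at $(x_0,\lambda_0)$ itself, since that point then lies in $(\Delta\times\R)\cap\cl{U}\setminus P_j(\Delta)$ while nontrivial stationary zeros of $\nabla_xH$ may accumulate there. The repair is short: $\eta_j(x_0,\lambda_0)\neq 0$ then forces $\eta_0(x_0,\lambda_0)\neq 0$ and $\lambda_0\in\Lambda_0$ isolated, so your argument applied with $j$ replaced by $0$ (where $(x_0,\lambda_0)\in P_0(\Delta)$ automatically) yields a global bifurcation point of nontrivial stationary solutions; these are $j$-solutions for every $j$, their bifurcation points are bifurcation points of $j$-solutions, and the component of the closure of nontrivial $0$-solutions sits inside that of the $j$-solutions, so unboundedness or a second bifurcation point is inherited. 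A smaller cosmetic point: for (N) at $\lambda\neq\lambda_0$ the relevant nonsingularity is that of $\nabla_x^2H(x_0,\lambda)$ (equivalently of $Q_0$, i.e.\ $\lambda\notin\Lambda_0$) combined with the implicit function theorem; the nonsingularity of the $Q_{lj},$ $l\in\N,$ which you cite controls nonstationary $j$-solutions via Corollary~\ref{necniel}, not the stationary locus.
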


Recall that in the case of system~\eqref{parham} with linear
dependence on parameter the set
$\cT=(\nabla H)^{-1}(\set{0})\times\halfline$ is regarded as
the set of trivial solutions. If $(x_0,\lambda_0)$ is a global bifurcation
point of nonstationary \mbox{$j$-so}lutions
of~\eqref{parham} then $C_j(x_0,\lambda_0)$ denotes the
connected component of the closure of the set of nonstationary
\mbox{$j$-so}lutions of~\eqref{parham} containing
$(x_0,\lambda_0).$
Theorem~\ref{genbif}
implies the following generalized version
of Theorem~\ref{globalprel}.

\begin{theorem}{\label{global}}
Let $H\in C^2(\R^{2n},\R)$ and let $(\nabla H)^{-1}(\set{0})$ be
finite. Fix $x_0\in(\nabla H)^{-1}(\set{0})$ and
$\lambda_0\in\Lambda^+(\nabla_x^2\hat{H}(x_0,\cdot)).$ If
$\eta_j(x_0,\lambda_0)\neq 0$ for some $j\in\N$ then
$(x_0,\lambda_0)$ is a~global bifurcation point of nonstationary
\mbox{$j$-solu}tions of~\eqref{parham}. Moreover, if the set
$C_j(x_0,\lambda_0)$ is bounded in $\hilb\times\halfline$ then
$C_j(x_0,\lambda_0)\cap\cT=\set{z_1,\ldots,z_m}$ for some $m\in\N,$
$z_1,\ldots,z_m\in\cT$ such that
\begin{equation*}
\sum_{i=1}^{m}\eta_{lj}(z_i)=0 \qquad \text{for every $l\in\N.$}
\end{equation*}
\end{theorem}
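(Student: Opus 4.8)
The plan is to deduce the statement from Theorem~\ref{genbif} applied to the Hamiltonian $\hat{H}(x,\lambda)=\lambda H(x)$, which rewrites \eqref{parham} in the form \eqref{parhamniel}. I would take $\Delta=(\nabla H)^{-1}(\set{0})$, finite by hypothesis, so that $\Delta\times\R\subset(\nabla_x\hat{H})^{-1}(\set{0})$ is the set of trivial solutions in the sense of Theorem~\ref{genbif}. The first thing to record is that the two notions of ``nontrivial solution'' agree on the open half-line: since $\nabla_x\hat{H}(x,\lambda)=\lambda\nabla H(x)$, a constant $x$ solves \eqref{parham} for $\lambda>0$ iff $\nabla H(x)=0$, i.e. iff $x\in\Delta$; hence for $\lambda>0$ the stationary solutions are exactly the trivial ones, and nonstationary $j$-solutions coincide with nontrivial $j$-solutions. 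I would also recall the a~priori bound from~\cite{DR} quoted above (no nonstationary solutions with $\norm{x}_0\leq K$ and $\abs{\lambda}$ small), which confines every bounded continuum of nonstationary solutions to a region $\lambda\geq\delta>0$, so that the entire argument can be run inside $\hilb\times\halfline$ with closures avoiding $\lambda=0$.

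Next I would verify the hypotheses of Theorem~\ref{genbif}. Finiteness of $\Delta$ together with the finiteness of $\Lambda(\nabla_x^2\hat{H}(x_0,\cdot))\cap[a,b]$ for each $x_0\in\Delta$ (noted in the text) shows that $P_j(\Delta)\cap U$ is finite for every bounded $U$. Condition (N) holds on the half-line because for $\lambda>0$ the zero set $(\nabla_x\hat{H})^{-1}(\set{0})$ locally coincides with $\Delta\times\R$. Taking a small bounded $U\subset\hilb\times\halfline$ around $(x_0,\lambda_0)$ with $\lambda_0\in\Lambda^+(\nabla_x^2\hat{H}(x_0,\cdot))$, and using $\eta_j(x_0,\lambda_0)\neq 0$, Theorem~\ref{genbif} yields that $(x_0,\lambda_0)$ is a~branching point of nontrivial $j$-solutions, which by the previous paragraph are nonstationary; in particular it is a~bifurcation point.

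For the global conclusion I would run the standard Rabinowitz alternative on $C_j(x_0,\lambda_0)$. If this component is unbounded in $\hilb\times\halfline$, it is a~global bifurcation point by Definition~\ref{genbifdef} and nothing more is needed. If it is bounded, then by Lemma~\ref{compbranch} it is compact, so I can choose a~bounded open $U\subset\hilb\times\halfline$ with closure in the open half-line, containing $C_j(x_0,\lambda_0)$, with $C_j(x_0,\lambda_0)\cap\partial U=\emptyset$ and $P_j(\Delta)\cap\partial U=\emptyset$ (possible since $C_j(x_0,\lambda_0)$ is compact and $P_j(\Delta)$ is finite). The a~priori bound guarantees that the component cannot cross toward $\lambda=0$, so it matches the component $K_j(x_0,\lambda_0)$ of Theorem~\ref{genbif}, and the ``moreover'' part gives $C_j(x_0,\lambda_0)\cap\cT=\set{z_1,\ldots,z_m}$ with $\sum_{i=1}^m\eta_{lj}(z_i)=0$ for every $l\in\N\cup\set{0}$, which restricted to $l\in\N$ is the asserted relation. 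Taking $l=1$ gives $\sum_{i=1}^m\eta_j(z_i)=0$; since $(x_0,\lambda_0)$ is among the $z_i$ and $\eta_j(x_0,\lambda_0)\neq 0$, some $z_{i_0}\neq(x_0,\lambda_0)$ must occur. As $z_{i_0}\in\cT$ lies in the closure of the set of nonstationary $j$-solutions but is itself trivial, it is a~cluster point of that set, hence a~second bifurcation point on the component, and again $(x_0,\lambda_0)$ is global. The main obstacle I anticipate is the bookkeeping of the two competing trivial/nontrivial conventions—Theorem~\ref{genbif} permits nontrivial stationary solutions, whereas here every stationary solution with $\lambda>0$ is trivial—together with using the~\cite{DR} bound to keep the branch away from $\lambda=0$, where the spurious stationary solutions at $\lambda=0$ would otherwise spoil the identification of $C_j(x_0,\lambda_0)$ with $K_j(x_0,\lambda_0)$.
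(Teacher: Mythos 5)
Your proposal is correct and follows exactly the route the paper intends: the paper derives Theorem~\ref{global} simply by asserting that Theorem~\ref{genbif} implies it (after the preparatory remarks on the a~priori bound from~\cite{DR}, the finiteness of $\Lambda(\nabla_x^2\hat{H}(x_0,\cdot))\cap[a,b],$ and $\eta_0\equiv 0$), and your write-up supplies precisely the omitted verifications — taking $\Delta=(\nabla H)^{-1}(\set{0}),$ checking condition (N) and the finiteness of $P_j(\Delta)\cap U$ on the half-line, using the a~priori bound to separate the branch from the spurious stationary solutions at $\lambda=0$ so that $K_j(x_0,\lambda_0)=C_j(x_0,\lambda_0),$ and extracting the second bifurcation point from the sum formula via $l=1.$
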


The conclusion of Theorem~\ref{global} does not seem to follow from
Theorem~\ref{globalprel}, since the formula for the sum of
bifurcation indices over the branch $C(x_0,\lambda_0)$ does not
imply the formula for the sum of indices over the branch
$C_j(x_0,\lambda_0)\subset C(x_0,\lambda_0).$

The results from~\cite{Rd,RR} provide sufficient conditions for
global bifurcation of (\mbox{$2\pi$-pe}\-riodic) solutions
of~\eqref{parham} and describe unbounded branches of solutions
bifurcating from given points. Theorem~\ref{global} allows to
replace that branches by appropriate branches of
\mbox{$j$-solu}tions. For example, taking into account
Corollary~\ref{necniel} and Theorem~\ref{global} one can generalize
Lemma~3.3, Theorem~4.6, and Corollary~5.3 from~\cite{RR} as follows.

\begin{corollary}
Assume that $H\in C^2(\R^{2n},\R)$ and that
$(\nabla H)^{-1}(\set{0})$
is finite. Let $x_0\in(\nabla H)^{-1}(\set{0})$ be such that
$\ind{\nabla H}{x_0}\neq 0$ and
$\nabla^2H(x_0) =\diag{A,B},$ $A,B\in\Sym(n,\R),$
where $A$ or $B$ is strictly positive or strictly negative
definite. Then for every $j\in\N$ the set of bifurcation points
$(x_0,\lambda)\in\set{x_0}\times\halfline$ of nonstationary
\mbox{$j$-solu}tions of~\eqref{parham} is equal to the set of
global bifurcation points of nonstationary \mbox{$j$-solu}tions
and equal to
\begin{equation*}
\set{\left(x_0,\frac{lj}{\sqrt{\nu}}\right)\cond
\nu\in\pspect{AB},l\in\N}.
\end{equation*}
Furthermore, if $(\nabla H)^{-1}(\set{0})=\set{x_0},$
$\lambda_0=\frac{j_0}{\sqrt{\nu_0}},$ $j_0\in\N,$
$\nu_0\in\pspect{AB}$ then the set $C_{j_0}(x_0,\lambda_0)$ is
unbounded in $\hilb\times\halfline.$
\end{corollary}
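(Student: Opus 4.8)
The plan is to combine the necessary condition of Corollary~\ref{necniel} with the sufficient condition of Theorem~\ref{global}, the whole argument resting on an explicit description of the sets $\Lambda_{lj}^+$ and of the bifurcation indices $\eta_{lj}(x_0,\lambda_0)$. Since $\hat H(x,\lambda)=\lambda H(x)$ gives $\nabla_x^2\hat H(x_0,\lambda)=\lambda\diag{A,B}$, the first step is to analyse $Q_m(\lambda\diag{A,B})$ for $m\in\N$. Using the eigenvector symmetry of Remark~\ref{qjeven}, i.e. the identification $\R^{2n}\times\R^{2n}\cong\C^{2n}$ sending $(v_1,v_2)$ to $v_1+iv_2$, I would show that $(1+m^2)Q_m(\lambda\diag{A,B})$ has the same eigenvalues as the Hermitian matrix
\begin{equation*}
H_m(\lambda)=\left[\begin{array}{cc}-\lambda A & -imI\\ imI & -\lambda B\end{array}\right],
\end{equation*}
each with doubled multiplicity. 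A Schur complement (using that the definite block is invertible) then yields $\det Q_m(\lambda\diag{A,B})=0$ iff $\det(m^2I-\lambda^2 BA)=0$, i.e. iff $\frac{m^2}{\lambda^2}\in\pspect{AB}$ for $\lambda>0$. Hence $\Lambda_m^+(\nabla_x^2\hat H(x_0,\cdot))=\set{m/\sqrt{\nu}\cond\nu\in\pspect{AB}}$, and $\bigcup_{l\in\N}\Lambda_{lj}^+$ is exactly the set in the statement. With this in hand, Corollary~\ref{necniel}\eqref{necjsol} shows that every bifurcation point of nonstationary \mbox{$j$-solutions} has parameter in $\bigcup_{l}\Lambda_{lj}^+$, so the sets of bifurcation points and of global bifurcation points are both contained in the candidate set.

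The main work is the reverse inclusion, i.e. verifying $\eta_{lj}(x_0,\lambda_0)\neq0$ at each $\lambda_0=lj/\sqrt{\nu}$. Writing $m=lj$ and differentiating a zero-crossing eigenvalue $\tilde\xi(\lambda)$ of $H_m(\lambda)$, for a unit kernel vector $w=(u_1,u_2)$ of $H_m(\lambda_0)$ its derivative at $\lambda_0$ equals $\inn{w}{-\diag{A,B}w}=-2\inn{u_1}{Au_1}=-2\inn{u_2}{Bu_2}$; the two forms agree because $BAu_1=\nu u_1$ and $u_2=\frac{i\lambda_0}{m}Au_1$. This identity is the crux: it shows that whenever $A$ or $B$ is strictly positive (resp. negative) definite, every eigenvalue crossing $0$ at $\lambda_0$ moves in the same direction, so $\lambda\mapsto\morse{Q_m(\lambda\diag{A,B})}$ is monotone on $\halfline$ with jump $\pm2\mu_{AB}(\nu)$ at $\lambda_0$, where $\mu_{AB}(\nu)$ denotes the multiplicity of $\nu$ in $\spect{AB}$. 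The bifurcation-index formula of the linear case then gives $\eta_{lj}(x_0,\lambda_0)=\pm\ind{\nabla H}{x_0}\cdot\mu_{AB}(\nu)\neq0$, since $\ind{\nabla H}{x_0}\neq0$.

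By Theorem~\ref{global} applied with $lj$ in place of $j$, the point $(x_0,\lambda_0)$ is then a global bifurcation point of nonstationary \mbox{$lj$-solutions}; as each \mbox{$lj$-solution} is a \mbox{$j$-solution}, the component through $(x_0,\lambda_0)$ of the closure of nonstationary \mbox{$j$-solutions} contains the $lj$-component and hence is unbounded or meets a second bifurcation point. So $(x_0,\lambda_0)$ is a global bifurcation point of \mbox{$j$-solutions}, and together with the previous inclusion the three sets coincide.

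For the last assertion I would argue by contradiction. If $(\nabla H)^{-1}(\set{0})=\set{x_0}$ then $\cT=\set{x_0}\times\halfline$; were $C_{j_0}(x_0,\lambda_0)$ bounded, Theorem~\ref{global} (with $l=1$) would force $\sum_{i}\eta_{j_0}(x_0,\lambda_i)=0$ over the finitely many $(x_0,\lambda_i)\in C_{j_0}\cap\cT$. But the monotonicity above makes every summand of the same sign, while the summand at $\lambda_0$ is nonzero, a contradiction; hence $C_{j_0}(x_0,\lambda_0)$ is unbounded. The step I expect to be most delicate is the co-orientation of the eigenvalue crossings of $H_m(\lambda)$, which is precisely where the definiteness of a single block is exploited through the identity $\inn{u_1}{Au_1}=\inn{u_2}{Bu_2}$; this computation and the ensuing index formula are essentially the content of Lemma~3.3 and the related results of~\cite{RR} that the corollary generalizes.
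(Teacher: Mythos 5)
Your proposal is correct and takes essentially the route the paper intends: the paper gives no separate proof but states that the corollary follows by "taking into account Corollary~\ref{necniel} and Theorem~\ref{global}" as a generalization of Lemma~3.3, Theorem~4.6, and Corollary~5.3 of~\cite{RR}, and your argument is exactly that combination -- the necessary condition from Corollary~\ref{necniel}\eqref{necjsol}, the nonvanishing of $\eta_{lj}$ via the sign-definite eigenvalue crossings (the content of the cited results of~\cite{RR}, as you note), Theorem~\ref{global} applied with $lj$ in place of $j,$ and the same-sign summation argument for unboundedness. Your only cosmetic deviation is computing $\spect{Q_m(\lambda\diag{A,B})}$ through the complexified Hermitian matrix $H_m(\lambda)$ with a Schur complement, where the paper would invoke Lemma~\ref{det} and condition~\eqref{maxrem}; the two calculations are equivalent, and your kernel identity $\inn{u_1}{Au_1}=\inn{u_2}{Bu_2}$ correctly delivers the Morse-index monotonicity on which both the index formula and the contradiction in the final step rest.
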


In the above corollary the set $C(x_0,\lambda_0)$ from
Corollary~5.3 in~\cite{RR} has been replaced by
$C_{j_0}(x_0,\lambda_0).$ Similarly, in the case when
$(\nabla H)^{-1}(\set{0})$ is not a~singleton but it is
finite, the unbounded branches $C(\xi,\frac{j}{\sqrt{\omega}})$ of
\mbox{$2\pi$-pe}riodic solutions in Corollaries~5.5-5.7 from
\cite{RR} can be replaced by the unbounded
branches~$C_j(\xi,\frac{j}{\sqrt{\omega}})$ of \mbox{$j$-solu}tions.

\section{Global bifurcation points in multiparameter
systems}{\label{gen}}

In this section global bifurcation and symmetry breaking
theorems for system~\eqref{parhamniel} are proved in the case of
arbitrary number $k$ of parameters. To this aim use is made of the
bifurcation theorems for the system with one parameter obtained in
the previous section.

\begin{definition}
Let $H\in C^{2,0}(\R^{2n}\times\R^k,\R),$ $j\in\N,$ and fix
$x_0\in\R^{2n}.$
A~function $F_j\in C(\R^k,\R)$
is called a~\emph{\mbox{$j$th} detecting function
for system~\eqref{parhamniel}} provided that the following
conditions are satisfied.
\begin{enumerate}
\item For every $\lambda\in\R^k,$ $F_j(\lambda)=0$ iff
$\det Q_j(\nabla_x^2H(x_0,\lambda))=0.$
\item For every straight line $L\subset \R^k$ and every
$\lambda_0\in L$ being an isolated zero of $F_j$ on $L,$
if $F_j$ changes its sign on $L$ at $\lambda_0$ then the
function
$\R^k\ni\lambda\mapsto \morse{Q_j(\nabla_x^2H(x_0,\lambda))}$
changes its value on $L$ at $\lambda_0.$
\end{enumerate}
$F_0\eqdf \det Q_0(\nabla_x^2H(x_0,\cdot))$ is called the
\emph{\mbox{$0$th} detecting function for~\eqref{parhamniel}}.
$\set{F_j}_{j\in\N\cup\set{0}}$ is said to be
a~\emph{detecting sequence for~\eqref{parhamniel}}
if $F_j$ is a~\mbox{$j$th} detecting
function for~\eqref{parhamniel} for every $j\in\N\cup\set{0}.$
\end{definition}

\begin{remark}{\label{biffun}}
Fix $x_0\in\R^{2n}$ and $j\in\N.$ Since $H$ is of class $C^{2,0}$
and for every $\lambda\in\R^k$ each eigenvalue $\nu(\lambda)$
of the symmetric matrix $Q_j(\nabla_x^2H(x_0,\lambda))$ has even
multiplicity $\mult{\nu(\lambda)}$ (see Remark~\ref{qjeven}),
there exist
$\nu_1,\ldots,\nu_{2n}\in C(\R^k,\R)$ such that
$\spect{Q_j(\nabla_x^2H(x_0,\lambda))}
=\set{\nu_1(\lambda),\ldots,\nu_{2n}(\lambda)}$
and for every $i\in\set{1,\ldots,2n}$ the eigenvalue
$\nu_i(\lambda)$ occurs
$\frac{1}{2}\mult{\nu_i(\lambda)}$ times in the \mbox{$2n$-tuple}
$(\nu_1(\lambda),\ldots,\nu_{2n}(\lambda)).$
Then the function $F_j$ defined by
$F_j(\lambda)=\nu_1(\lambda)\cdot\ldots\cdot\nu_{2n}(\lambda)$
is a~\mbox{$j$th} detecting function for~\eqref{parhamniel}.
Notice that the mapping
$\R^k\ni \lambda \mapsto \det Q_j(\nabla_x^2(x_0,\lambda))$
is nonnegative for every $j\in\N,$ therefore it cannot be used to
detect the change of the Morse index of
$Q_j(\nabla_x^2(x_0,\lambda)).$
\end{remark}

Now, explicit formulae for detecting functions (exploited in
examples in Section~\ref{examples}) will be given
in the case when
\begin{equation}{\label{bldi}}
\forall_{\lambda\in\R^k}:\;\;\nabla_x^2H(x_0,\lambda)
=\left[
\begin{array}{cc}
A(\lambda)&0\\
0&B(\lambda)
\end{array}
\right],\;\;\;A(\lambda),B(\lambda)\in\Sym(n,\R).
\end{equation}

If $C,D\in\Sym(n,\R)$ and $K\in\Sym(2n,\R)$ is of the form
\begin{equation*}
K= \left[
\begin{array}{cc}
C&0\\
0&D
\end{array}
\right]
\end{equation*}
then for every $j\in\N\cup\set{0}$ define $G_j(K)\in\Sym(2n,\R)$
and $X\in\Ort(4n,\R)$ as follows.
\begin{equation*}
G_j(K)= \left[
\begin{array}{cc}
-C&jI\\
jI&-D
\end{array}
\right] = -K+j\left[
\begin{array}{cc}
0&I\\
I&0
\end{array}
\right],
\end{equation*}
\begin{equation*}
X= \left[
\begin{array}{cccc}
I&0&0&0\\
0&0&0&-I\\
0&0&I&0\\
0&I&0&0
\end{array}
\right],
\end{equation*}
where $I\equiv I_n.$

\begin{lemma}[\cite{RR}]{\label{det}}
For every $j\in\N$ one has
\begin{enumerate}
\item
\begin{equation*}
X^t Q_j(K) X= \frac{1}{1+j^2}\left[
\begin{array}{cc}
G_j(K)&0\\
0&G_j(K)
\end{array}
\right],
\end{equation*}

\item $\det G_j(K)=\det[CD-j^2I].$
\end{enumerate}
\end{lemma}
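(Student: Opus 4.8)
The plan is to prove Lemma 5.9 by a direct computation, verifying each of the two assertions in turn. The matrix $X$ is orthogonal (its columns are a permutation of the standard basis vectors, up to a sign), so the first assertion is a conjugation identity that can be checked by explicit block-matrix multiplication. Recall from~\eqref{oznQ} that $Q_j(K)=\frac{1}{1+j^2}\left[\begin{smallmatrix}-K&jJ^t\\ jJ&-K\end{smallmatrix}\right]$, where here $K=\diag{C,D}$ and $J=J_n=\left[\begin{smallmatrix}0&-I\\ I&0\end{smallmatrix}\right]$, so $J^t=\left[\begin{smallmatrix}0&I\\ -I&0\end{smallmatrix}\right]$. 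The first step is to write $Q_j(K)$ fully as a $4n\times 4n$ matrix in four $n\times n$ blocks and then carry out the product $X^t Q_j(K) X$.

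First I would compute $X^t Q_j(K) X$ block by block. The point of conjugating by $X$ is that it permutes and sign-adjusts the four coordinate blocks of $\R^{4n}=\R^n\oplus\R^n\oplus\R^n\oplus\R^n$ precisely so as to decouple the symplectic coupling $jJ$ into two independent copies of the simpler coupling matrix $\left[\begin{smallmatrix}0&I\\ I&0\end{smallmatrix}\right]$ that appears in $G_j(K)$. Concretely, $X$ sends the second and fourth blocks into each other (with a sign on one), which converts the off-diagonal $J$-blocks into $\pm I$ blocks in the right positions; the signs then cancel to yield the symmetric form $G_j(K)=\left[\begin{smallmatrix}-C&jI\\ jI&-D\end{smallmatrix}\right]$ in each of the two diagonal $2n\times 2n$ blocks. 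This is the arithmetic content of the lemma, and since $X$ is merely a signed permutation it is a routine (if slightly tedious) verification that $X^t Q_j(K) X=\frac{1}{1+j^2}\diag{G_j(K),G_j(K)}$.

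For the second assertion I would compute $\det G_j(K)$ directly. Writing $G_j(K)=\left[\begin{smallmatrix}-C&jI\\ jI&-D\end{smallmatrix}\right]$ in $n\times n$ blocks, and noting that the off-diagonal blocks $jI$ commute with everything, I would apply the standard block-determinant formula for a matrix whose blocks commute: $\det\left[\begin{smallmatrix}P&R\\ S&T\end{smallmatrix}\right]=\det(PT-RS)$ when $R$ and $S$ commute. Here $P=-C$, $T=-D$, $R=S=jI$, so $PT-RS=(-C)(-D)-(jI)(jI)=CD-j^2I$, giving $\det G_j(K)=\det[CD-j^2I]$ at once.

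The only genuine subtlety — and the step I would flag as the place where care is needed rather than a true obstacle — is keeping track of the signs and the position of $I$ versus $-I$ in the definition of $X$ when forming $X^t$ and carrying out the conjugation, since an error in the placement of the minus sign in the second block would spoil the cancellation and produce spurious off-diagonal terms. I would therefore lay out the four-block product carefully and verify that the antisymmetric part coming from $jJ$ is exactly symmetrized into $jI$ by the chosen permutation. Everything else is mechanical, and the result of~\cite{RR} cited in the lemma statement confirms the intended identities, so no external machinery beyond elementary block linear algebra is required.
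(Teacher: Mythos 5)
Your proof is correct, and there is in fact nothing internal to compare it against: the paper states this lemma with the attribution \cite{RR} and gives no proof of its own, so your direct computation supplies a self-contained verification of an imported result. The conjugation identity checks out exactly as you outline: writing $(1+j^2)Q_j(K)$ as a $4\times 4$ array of $n\times n$ blocks and noting that $X$ acts on block coordinates by $(u_1,u_2,u_3,u_4)\mapsto(u_1,-u_4,u_3,u_2)$, the product $X^t\,(1+j^2)Q_j(K)\,X$ comes out as $\diag{G_j(K),G_j(K)}$ with all signs cancelling as you predict, and $X$ is indeed orthogonal as a signed block permutation. One small inaccuracy to fix in the write-up: the block-determinant formula $\det\left[\begin{smallmatrix}P&R\\ S&T\end{smallmatrix}\right]=\det(PT-RS)$ is usually stated under the hypothesis that $S$ and $T$ commute (one row of blocks commutes), not that the two off-diagonal blocks $R$ and $S$ commute with each other. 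In your application this is harmless, since $R=S=jI$ is a scalar matrix and commutes with every block, so any version of the hypothesis is satisfied; moreover the potential ambiguity between $\det(CD-j^2I)$ and $\det(DC-j^2I)$ is immaterial, because $CD$ and $DC$ always have the same characteristic polynomial (or, here, because $C,D\in\Sym(n,\R)$ gives $\det(CD-j^2I)=\det\bigl((CD-j^2I)^t\bigr)=\det(DC-j^2I)$). With the hypothesis stated correctly, the argument is complete.
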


Using Lemma~\ref{det} one obtains the following.

\begin{lemma}{\label{biffunbldi}}
Let $H\in C^{2,0}(\R^{2n}\times\R^k,\R)$ and fix
$x_0\in\R^{2n}.$
Assume that condition~\eqref{bldi} is satisfied.
Define the functions $F_j\colon\R^k\rightarrow\R,$
$j\in\N\cup\set{0},$ by the formula
\begin{equation}{\label{Fj}}
F_j(\lambda)
=\det G_j(\nabla_x^2H(x_0,\lambda))
=\det[A(\lambda)B(\lambda)-j^2I].
\end{equation}
Then $\set{F_j}_{j\in\N\cup\set{0}}$ is a~detecting sequence
for~\eqref{parhamniel}.
\end{lemma}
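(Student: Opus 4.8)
The plan is to verify, for each $j\in\N$, that the function $F_j$ of~\eqref{Fj} satisfies the two defining conditions of a~\mbox{$j$th} detecting function; for $j=0$ there is nothing to prove, since $F_0=\det Q_0(\nabla_x^2H(x_0,\cdot))$ is a~\mbox{$0$th} detecting function by definition. Throughout I would write $K(\lambda)=\nabla_x^2H(x_0,\lambda)=\diag{A(\lambda),B(\lambda)}$ and lean on Lemma~\ref{det}, which gives $X^tQ_j(K)X=\frac{1}{1+j^2}\diag{G_j(K),G_j(K)}$ with $X\in\Ort(4n,\R)$, together with $\det G_j(K)=\det[A(\lambda)B(\lambda)-j^2I]=F_j(\lambda)$.

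First I would establish condition~(1). Since $X$ is orthogonal, $(\det X)^2=1$, so taking determinants in Lemma~\ref{det}(1) and using that $\diag{G_j(K),G_j(K)}$ is a~$4n\times 4n$ matrix with two identical $2n\times 2n$ blocks yields
$\det Q_j(K)=\left(\frac{1}{1+j^2}\right)^{4n}(\det G_j(K))^2=\left(\frac{1}{1+j^2}\right)^{4n}F_j(\lambda)^2$.
As $\frac{1}{1+j^2}>0$, this shows $\det Q_j(K(\lambda))=0$ iff $F_j(\lambda)=0$, which is precisely condition~(1) (and, as a~byproduct, reproves the nonnegativity of $\det Q_j$ noted in Remark~\ref{biffun}).

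Next I would record the Morse-index identity that powers condition~(2). Because $Q_j(K)$ is orthogonally conjugate to $\frac{1}{1+j^2}\diag{G_j(K),G_j(K)}$ and the scalar $\frac{1}{1+j^2}$ is positive, conjugation and positive scaling preserve the signs of eigenvalues; hence the negative eigenvalues of $Q_j(K)$, counted with multiplicity, are exactly those of $\diag{G_j(K),G_j(K)}$, giving $\morse{Q_j(K)}=2\,\morse{G_j(K)}$ (consistent with the even multiplicities of Remark~\ref{qjeven}).

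Finally, for condition~(2), I would fix a~straight line $L$ and an~isolated zero $\lambda_0\in L$ of $F_j$ at which $F_j$ changes sign. On a~punctured neighbourhood of $\lambda_0$ in $L$ one has $F_j=\det G_j(K)\neq 0$, so the symmetric matrix $G_j(K(\lambda))$ is nonsingular and $\sgn{(\det G_j(K(\lambda)))}=(-1)^{\morse{G_j(K(\lambda))}}$. Since $H$ is of class $C^{2,0}$, the map $\lambda\mapsto G_j(K(\lambda))$ is continuous, its eigenvalues vary continuously, and none crosses zero on either side of $\lambda_0$, so $\morse{G_j(K(\cdot))}$ is constant on each side; as $F_j$ changes sign at $\lambda_0$, the two one-sided parities of $\morse{G_j(K)}$ differ, forcing the values of $\morse{G_j(K(\cdot))}$ on the two sides of $\lambda_0$ to differ, and by $\morse{Q_j(K)}=2\,\morse{G_j(K)}$ the same holds for $\lambda\mapsto\morse{Q_j(\nabla_x^2H(x_0,\lambda))}$. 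This gives condition~(2). The only point needing real care — and the nearest thing to an~obstacle — is the bookkeeping of matrix sizes and orthogonal-invariance factors in the determinant step; the remainder is a~direct application of Lemma~\ref{det} together with the elementary fact that the sign of the determinant of a~nonsingular symmetric matrix records the parity of its negative Morse index.
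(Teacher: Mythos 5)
Your proof is correct and takes essentially the same route as the paper, which states the lemma as an immediate consequence of Lemma~\ref{det}: the determinant identity $\det Q_j(K)=\left(\frac{1}{1+j^2}\right)^{4n}\left(\det G_j(K)\right)^2$ gives condition~(1), and the inertia relation $\morse{Q_j(K)}=2\,\morse{G_j(K)}$ together with $\sgn{\det G_j(K)}=(-1)^{\morse{G_j(K)}}$ for nonsingular symmetric matrices gives condition~(2), exactly as you argue. Your write-up merely makes explicit the size and sign bookkeeping (and the one-sided constancy of the Morse index near an isolated zero) that the paper leaves implicit in the phrase ``Using Lemma~\ref{det} one obtains the following.''
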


Notice that the functions $F_j$ given by~\eqref{Fj} multiplied by
$\frac{1}{1+j^2}$ are equal to the functions $F_j$ from
Remark~\ref{biffun}.

Clearly, for every $j\in\N,$ $\lambda\in\R^k$ the function $F_j$
given by~\eqref{Fj} satisfies the condition
\begin{equation}{\label{maxrem}}
F_j(\lambda)=0 \quad \Leftrightarrow \quad
j^2\in\pspect{A(\lambda)B(\lambda)}
\quad \Leftrightarrow \quad
1\in\pspect{\frac{1}{j^2}A(\lambda)B(\lambda)}.
\end{equation}

\begin{lemma}{\label{maxgen}}
Let $H\in C^{2,0}(\R^{2n}\times\R^k,\R),$ fix $x_0\in\R^{2n},$ and
let $\set{F_j}_{j\in\N\cup\set{0}}\subset C(\R^k,\R)$ be
a~detecting sequence for~\eqref{parhamniel}. Then for every bounded
open set $U\subset\R^k$ the set
\begin{equation*}
\set{j\in\N\cup\set{0}\cond \exists_{\lambda\in U}:\;F_j(\lambda)=0}
\end{equation*}
is finite. Moreover, every
$\lambda_0\in\R^k$ has an open neighbourhood $U\subset\R^k$ such
that $F_j(\lambda)\neq 0$ for every $\lambda\in U$ and every
$j\in\N\cup\set{0}$ such that $F_j(\lambda_0)\neq 0.$
\end{lemma}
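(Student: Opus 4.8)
The plan is to prove the finiteness assertion first, deducing the neighbourhood assertion from it by continuity. The crux is the elementary spectral bound: for $K\in\Sym(2n,\R)$ and $j\in\N$, one has $\det Q_j(K)=0\Longrightarrow j\leq\norm{K}$, where $\norm{K}$ is the operator norm. To see this, suppose $Q_j(K)$ is singular and take a nonzero kernel vector $(v_1,v_2)\in\R^{2n}\times\R^{2n}$. Since $j\geq 1$ and $J^t$ is invertible, the top block $-Kv_1+jJ^tv_2=0$ forces $v_1\neq 0$ (otherwise $v_2=0$ as well). Using $(J^t)^{-1}=J$ to write $v_2=\tfrac{1}{j}JKv_1$ and substituting into the bottom block $jJv_1-Kv_2=0$ yields $J^tKJKv_1=j^2v_1$, so $j^2$ is an eigenvalue of $J^tKJK$. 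As $J$ is orthogonal, $\norm{J^tKJK}\leq\norm{K}^2$, so the spectral radius of $J^tKJK$ is at most $\norm{K}^2$; since $j^2$ is one of its real eigenvalues, $j^2\leq\norm{K}^2$, that is, $j\leq\norm{K}$.

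The finiteness assertion is then immediate. Given a bounded open $U\subset\R^k$, its closure $\cl{U}$ is compact and $\lambda\mapsto\nabla_x^2H(x_0,\lambda)$ is continuous (because $H\in C^{2,0}$), so $M\eqdf\sup_{\lambda\in\cl{U}}\norm{\nabla_x^2H(x_0,\lambda)}<\infty$. If $F_j(\lambda)=0$ for some $j\in\N$ and $\lambda\in U$, then by the defining property of a detecting function $\det Q_j(\nabla_x^2H(x_0,\lambda))=0$, whence $j\leq\norm{\nabla_x^2H(x_0,\lambda)}\leq M$. Adjoining the single index $j=0$, the set in question is contained in $\set{0,1,\ldots,\lfloor M\rfloor}$ and is therefore finite.

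For the second assertion, fix $\lambda_0\in\R^k$ and pick any bounded open ball $U_0\ni\lambda_0$. By the finiteness just established, the set $S$ of indices $j$ for which $F_j$ vanishes somewhere on $U_0$ is finite. For each $j\in S$ with $F_j(\lambda_0)\neq 0$, continuity of $F_j$ yields an open $U_j\ni\lambda_0$ on which $F_j\neq 0$; I set $U=U_0\cap\bigcap_{j\in S,\,F_j(\lambda_0)\neq 0}U_j$, a finite intersection and hence an open neighbourhood of $\lambda_0$. Now let $j$ be any index with $F_j(\lambda_0)\neq 0$. If $j\notin S$ then $F_j$ has no zero in $U_0\supset U$; if $j\in S$ then $F_j\neq 0$ on $U_j\supset U$. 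In either case $F_j$ is nowhere zero on $U$, as required.

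The only substantial point is the spectral bound $j\leq\norm{K}$; the rest is compactness and continuity. I note that the finiteness could alternatively be read off from conclusion~\eqref{amzlemmor} of Theorem~\ref{finitedim}, applied to a compactly supported modification of $H$ as in Remark~\ref{mod}, but the direct estimate above is self-contained and yields the explicit bound $j\leq\norm{\nabla_x^2H(x_0,\lambda)}$.
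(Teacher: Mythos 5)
Your proof is correct, and it takes a genuinely different route from the paper's. The paper's own argument factors out $j$: it writes
$Q_j(\nabla_x^2H(x_0,\lambda))=\frac{j}{j^2+1}\left(P+\frac{1}{j}Z(\lambda)\right),$
where $P$ is the symmetric orthogonal matrix with off-diagonal blocks $J^t,J$ and $Z(\lambda)=\diag{-\nabla_x^2H(x_0,\lambda),-\nabla_x^2H(x_0,\lambda)},$ notes $\spect{P}=\set{-1,1},$ and invokes stability of the spectrum of a symmetric matrix under small symmetric perturbations: once $\frac{1}{j}\norm{Z(\lambda)}$ is uniformly below a fixed $\varepsilon$ on $U,$ the spectrum of $P+\frac{1}{j}Z(\lambda)$ avoids $\left(-\frac{1}{2},\frac{1}{2}\right)$ and hence $\det Q_j\neq 0.$ You instead work directly with a kernel vector of $Q_j(K),$ eliminate $v_2,$ and conclude $j^2\in\spect{J^tKJK},$ whence $j\leq\norm{K}$ via the spectral radius bound and orthogonality of $J$; your algebra checks out, including the step ruling out $v_1=0$ and the fact that only the forward implication of the detecting-function property (1) is needed. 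What each approach buys: yours is more elementary (no perturbation theory, only submultiplicativity of the operator norm) and gives the explicit, essentially sharp threshold $j\leq\sup_{\lambda\in\cl{U}}\norm{\nabla_x^2H(x_0,\lambda)},$ whereas the paper's $\varepsilon$ is qualitative; conceptually your identity $J^tKJK=-(JK)^2$ recovers the classical fact that singularity of $Q_j(K)$ forces $\pm ij\in\spect{JK},$ i.e. resonance of the linearized flow, which is the hidden content of the paper's perturbation argument. The remainder of both proofs is identical: compactness of $\cl{U}$ and continuity of $\nabla_x^2H(x_0,\cdot)$ give finiteness, and continuity of the finitely many relevant $F_j$ gives the neighbourhood claim. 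Your parenthetical alternative via conclusion~\eqref{amzlemmor} of Theorem~\ref{finitedim} together with Remark~\ref{mod} is also legitimate, though, as you say, less self-contained.
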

\begin{proof}
For every $j\in\N$ one has
$Q_j(\nabla_x^2H(x_0,\lambda))
=\frac{j}{j^2+1}\left(P+\frac{1}{j}Z(\lambda)\right),$
where
\begin{equation*}
P=\left[\begin{array}{cc}
0&J^t \\
J& 0
\end{array}\right],
\qquad Z(\lambda)=\left[\begin{array}{cc}
-\nabla_x^2H(x_0,\lambda)&0\\
0&-\nabla_x^2H(x_0,\lambda)
\end{array}\right].
\end{equation*}
Since $\sigma(P)=\set{-1,1},$ there exists $\varepsilon>0$
such that for every $T\in S(4n,\R)$ with the operator norm
$\norm{T}<\varepsilon$ one has
$\sigma(P+T)\cap(-\frac{1}{2},\frac{1}{2})=\emptyset,$
hence $\det(P+T)\neq 0.$ On the other hand,
$H\in C^{2,0}(\R^{2n}\times\R^k,\R),$ therefore for any bounded
open set $U\subset \R^k$ the number
$\displaystyle \sup_{\lambda\in U}\norm{Z(\lambda)}$
is finite. Thus for fixed $U$ there exists $m\in\N$ such that
$\frac{1}{j}\norm{Z(\lambda)}<\varepsilon$
for every $\lambda\in U$ and $j\in\N,$ $j>m.$ Consequently,
$F_j(\lambda)\neq 0$ for every $\lambda\in U,$ $j\in\N,$ $j>m.$
Now, choose $U$ to be a~neighbourhood of $\lambda_0.$ Since the set
$\set{0,\ldots,m}$ is finite and $F_0,\ldots,F_m$ are continuous,
one can change $U$ in such a~way that also $F_j(\lambda)\neq 0$ for
every $\lambda\in U$ and every $j\in\set{0,\ldots,m}$ such that
$F_j(\lambda_0)\neq 0.$
\end{proof}

Let $[a]$ denote the integer part of $a\in\R.$ One can use the
following lemma to find all the functions $F_j$ vanishing in
a~neighbourhood of given $\lambda_0\in\R^k$ in the case of systems
satisfying condition~\eqref{bldi}.

\begin{lemma}{\label{max}}
Let the assumptions of Lemma~\ref{biffunbldi} be satisfied.
Fix $\lambda_0\in\R^k$ and set
\begin{equation*}
N(\lambda_0)=
\left[\max\set{\sqrt{\nu}\cond
\nu\in\pspect{A(\lambda_0)B(\lambda_0)}}\right].
\end{equation*}
Then there exists an open neighbourhood $U\subset\R^k$ of
$\lambda_0$ such that $F_j(\lambda)\neq 0$ for every
$j>N(\lambda_0),$ $\lambda\in U.$
\end{lemma}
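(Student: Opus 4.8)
The plan is to reduce the assertion to Lemma~\ref{maxgen} by way of the characterization~\eqref{maxrem}. The point is that for every integer $j>N(\lambda_0)$ one already has $F_j(\lambda_0)\neq0$; granting this, the single neighbourhood furnished by Lemma~\ref{maxgen} (which is uniform over \emph{all} indices $j$ with $F_j(\lambda_0)\neq0$) delivers the conclusion at once.

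First I would check that $F_j(\lambda_0)\neq0$ whenever $j>N(\lambda_0)$. By definition of the integer part, $\max\set{\sqrt{\nu}\cond\nu\in\pspect{A(\lambda_0)B(\lambda_0)}}<N(\lambda_0)+1$, so every $\nu\in\pspect{A(\lambda_0)B(\lambda_0)}$ satisfies $\nu<(N(\lambda_0)+1)^2$. For an integer $j>N(\lambda_0)$ one has $j\ge N(\lambda_0)+1$, whence $j^2\ge(N(\lambda_0)+1)^2>\nu$ for every positive real eigenvalue $\nu$ of $A(\lambda_0)B(\lambda_0)$. Thus $j^2\notin\pspect{A(\lambda_0)B(\lambda_0)}$, and~\eqref{maxrem} gives $F_j(\lambda_0)\neq0$. (If $\pspect{A(\lambda_0)B(\lambda_0)}=\emptyset$ the same conclusion is immediate, for every $j\in\N$.)

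Next I would apply Lemma~\ref{maxgen}, which yields one open neighbourhood $U\subset\R^k$ of $\lambda_0$ with $F_j(\lambda)\neq0$ for all $\lambda\in U$ and all $j\in\N\cup\set{0}$ for which $F_j(\lambda_0)\neq0$. Since the previous step shows that every $j>N(\lambda_0)$ is one of those indices, the same $U$ satisfies $F_j(\lambda)\neq0$ for all $j>N(\lambda_0)$ and all $\lambda\in U$, which is exactly the statement to be proved.

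The only delicate feature is that infinitely many indices $j$ must be controlled by a single neighbourhood, and this is precisely what Lemma~\ref{maxgen} provides: its proof handles the large indices uniformly through the spectral estimate on $P+\frac{1}{j}Z(\lambda)$ and the finitely many remaining indices by continuity of $F_0,\ldots,F_m$. Should a self-contained argument be preferred, I would instead set $R=(N(\lambda_0)+1)^2$, pick $\varepsilon>0$ smaller than $\abs{\Im\nu}$ for each non-real eigenvalue $\nu$, smaller than $\abs{\nu}$ for each negative real eigenvalue, and small enough that $\nu+\varepsilon<R$ for each positive eigenvalue $\nu$ of $A(\lambda_0)B(\lambda_0)$ (finitely many constraints), and then invoke continuity of the roots of the characteristic polynomial of $A(\lambda)B(\lambda)$ (its coefficients depend continuously on $\lambda$ because $\nabla_x^2H(x_0,\cdot)$ is continuous) to produce $U$ on which no eigenvalue can migrate to a positive real value $\geq R$; by~\eqref{maxrem} this again forces $F_j(\lambda)\neq0$ for $j>N(\lambda_0)$.
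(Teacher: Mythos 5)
Your proposal is correct and takes essentially the same route as the paper: the paper's proof likewise notes that $j>N(\lambda_0)$ implies $j^2>\max\pspect{A(\lambda_0)B(\lambda_0)}$, hence $F_j(\lambda_0)\neq 0$ by~\eqref{maxrem}, and then applies Lemma~\ref{maxgen} to get the single neighbourhood. Your extra details (the integer-part inequality, the empty-spectrum case, and the optional self-contained eigenvalue-continuity argument) are sound elaborations of the same two-step argument.
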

\begin{proof}
If $j>N(\lambda_0)$ then
$j^2>\max\pspect{A(\lambda_0)B(\lambda_0)},$ hence
$F_j(\lambda_0)\neq 0,$ in view of~\eqref{maxrem}.
Application of Lemma~\ref{maxgen} completes the proof.
\end{proof}

The following assumptions are used in the reminder of this paper.

\begin{trivlist}
\item[\textbf{(H1)}] $H\in C^{2,0}(\R^{2n}\times\R^k,\R),$

\item[\textbf{(H2)}] $x_0\in\R^{2n}$ and $\nabla_x
H(x_0,\lambda)=0$ for all $\lambda\in\R^k,$

\item[\textbf{(H3)}] $\set{F_j}_{j\in\N\cup\set{0}}
\subset C(\R^k,\R)$
is a~detecting sequence for~\eqref{parhamniel}.
\end{trivlist}

The set $\cT(x_0)=\set{x_0}\times\R^k$ is regarded as the set of
trivial solutions of~\eqref{parhamniel}.
In some theorems it is assumed additionally that for given
$\lambda\in\R^k$ the following conditions are satisfied.

\begin{trivlist}
\item[\textbf{(E1$(x_0,\lambda)$)}] There exists a~neighbourhood
$W\subset\R^{2n}\times\R^k$ of $(x_0,\lambda)$ such that
\begin{equation*}
(\nabla_xH)^{-1}(\set{0})\cap W\subset\set{x_0}\times\R^k
\end{equation*}
(i.e. $(x_0,\lambda)$ is not a~bifurcation point of nontrivial
stationary solutions of~\eqref{parhamniel}),
\end{trivlist}

\begin{trivlist}
\item[\textbf{(E2$(x_0,\lambda)$)}]
$\ind{\nabla_xH(\cdot,\lambda)}{x_0}\neq 0.$
\end{trivlist}

\begin{remark}{\label{inters}}
If conditions (H1)-(H3) are satisfied then
Lemma~\ref{isotropy} and Corollary~\ref{necniel} can be formulated
in terms of the functions $F_j,$ since for every
$j\in\N\cup\set{0},$ $\lambda_0\in\R^k$ one has
\begin{equation*}
\Lambda_j(\nabla_x^2H(x_0,\cdot))=F_j^{-1}(\set{0}),
\end{equation*}
\begin{equation*}
X(\lambda_0)=\set{j\in\N\cup\set{0}\cond F_j(\lambda_0)=0}.
\end{equation*}
\end{remark}

In what follows $\lambda_0\in\R^k$ is fixed.

Continuous curve in $\R^k$ means any subset of
$\R^k$ homeomorphic to $\R.$ A~submanifold of $\R^k$ is called
a~manifold and the tangent space to such a~manifold is regarded as
a~linear subspace of $\R^k.$

\begin{theorem}{\label{zdeg}}
Let conditions (H1)-(H3), (E1$(x_0,\lambda_0)$), and
(E2$(x_0,\lambda_0)$) be satisfied.
Assume that $M\subset\R^k$ is a~continuous curve and
$\lambda_0\in M$ is an isolated element of the set
\begin{equation*}
\bigcup_{l\in\N}F_{lj}^{-1}(\set{0})\cap M
\end{equation*}
for some $j\in\N.$ If the restriction of $F_{j}$ to $M$ changes its
sign at $\lambda_0$ then $(x_0,\lambda_0)$ is a~branching
point of nonstationary \mbox{$j$-solu}tions of~\eqref{parhamniel}
and a~global bifurcation point of nontrivial \mbox{$j$-solu}tions.
\end{theorem}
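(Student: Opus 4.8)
The plan is to reduce the $k$-parameter statement to the one-parameter Theorem~\ref{genzdeg} by restricting the parameter to the curve $M$. Since $M$ is a continuous curve, hence homeomorphic to $\R$, I would fix a homeomorphism $\gamma\colon\R\to M$ with $\gamma(s_0)=\lambda_0$ and set $\tilde{H}(x,s)\eqdf H(x,\gamma(s))$. Because membership in $C^{2,0}(\R^{2n}\times\R^k,\R)$ only demands joint continuity of $H$ together with that of its first two $x$-derivatives, precomposing the \emph{parameter} slot with the merely continuous map $\gamma$ preserves this regularity; thus $\tilde{H}\in C^{2,0}(\R^{2n}\times\R,\R)$, with $\nabla_x\tilde{H}(x,s)=\nabla_xH(x,\gamma(s))$ and $\nabla_x^2\tilde{H}(x,s)=\nabla_x^2H(x,\gamma(s))$.

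Next I would verify the hypotheses of Theorem~\ref{genzdeg} for $\tilde{H}$ at $(x_0,s_0)$. Assumption (H2) gives $\nabla_x\tilde{H}(x_0,s)=0$ for all $s$. Since $Q_{lj}(\nabla_x^2\tilde{H}(x_0,s))=Q_{lj}(\nabla_x^2H(x_0,\gamma(s)))$, Remark~\ref{inters} yields $\Lambda_{lj}(\nabla_x^2\tilde{H}(x_0,\cdot))=\gamma^{-1}(F_{lj}^{-1}(\set{0}))$, so that $\bigcup_l\Lambda_{lj}(\nabla_x^2\tilde{H}(x_0,\cdot))=\gamma^{-1}(\bigcup_lF_{lj}^{-1}(\set{0})\cap M)$; as $\gamma$ is a homeomorphism, $s_0$ is isolated in this set. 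Condition (E1$(x_0,\lambda_0)$) transfers to $\tilde{H}$ by taking the $\gamma$-preimage of the neighbourhood $W$, so $(x_0,s_0)$ is not a bifurcation point of nontrivial stationary solutions for $\tilde H$. For the index, condition (E1$(x_0,\lambda_0)$) makes $x_0$ the only zero of $\nabla_xH(\cdot,\lambda)$ in a fixed small ball for all $\lambda$ near $\lambda_0$, so homotopy invariance of the Brouwer degree together with (E2$(x_0,\lambda_0)$) shows that $\ind{\nabla_xH(\cdot,\gamma(s_0\pm\varepsilon))}{x_0}$ is constant and equal to $i\eqdf\ind{\nabla_xH(\cdot,\lambda_0)}{x_0}\neq0$ for small $\varepsilon>0$. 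Writing $\mu_\pm\eqdf\morse{Q_j(\nabla_x^2H(x_0,\gamma(s_0\pm\varepsilon)))}$, the one-parameter index of $\tilde H$ then collapses to $\eta_j(x_0,s_0)=i\cdot\tfrac12(\mu_+-\mu_-)$.

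The crux is to show the Morse index jumps, i.e.\ $\mu_+\neq\mu_-$. Here I would work with the continuous eigenvalue functions $\nu_1,\dots,\nu_{2n}\colon\R^k\to\R$ of Remark~\ref{biffun}: since every eigenvalue of $Q_j$ has even multiplicity, $\morse{Q_j(\nabla_x^2H(x_0,\lambda))}=2\,\#\set{i\cond\nu_i(\lambda)<0}$, a quantity that is locally constant off the zero set $F_j^{-1}(\set{0})$ and whose parity equals that of the number of negative factors of $\nu_1\cdots\nu_{2n}$, hence controls the sign of that product on each component of $\R^k\setminus F_j^{-1}(\set{0})$. As $\lambda_0$ is isolated in $\bigcup_lF_{lj}^{-1}(\set{0})\cap M$, the two arcs $\gamma((s_0-\varepsilon,s_0))$ and $\gamma((s_0,s_0+\varepsilon))$ avoid $F_j^{-1}(\set{0})$, so $\mu_\pm$ are well defined and $F_j$ takes opposite signs on them. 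The task is then to force, from the sign change of the \emph{prescribed} $F_j$ along $M$, a change of the count $\#\set{i\cond\nu_i<0}$ across $\lambda_0$, and hence $\mu_+\neq\mu_-$. This is exactly the detecting-function axiom~(2), and transferring it is the main obstacle: axiom~(2) is stated for straight lines, whereas $M$ is only a continuous curve. I would bridge this by propagating~(2) to $M$ through the fact that the Morse index is locally constant off $F_j^{-1}(\set{0})$, that $M\cong\R$ splits a neighbourhood of $\lambda_0$ into exactly two arcs, and that $F_j$ and $\nu_1\cdots\nu_{2n}$ share the same zero set, reconciling the sign of $F_j$ with that of the canonical product on the two abutting components. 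This gives $\eta_j(x_0,s_0)=i\cdot\tfrac12(\mu_+-\mu_-)\neq0$.

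Finally, with $\eta_j(x_0,s_0)\neq0$, Theorem~\ref{genzdeg} applies to $\tilde{H}$ and shows that $(x_0,s_0)$ is a branching point of nonstationary \mbox{$j$-so}lutions and a global bifurcation point of nontrivial \mbox{$j$-so}lutions of the one-parameter problem for $\tilde{H}$. I would then transport these conclusions back to~\eqref{parhamniel}: the map $(x,s)\mapsto(x,\gamma(s))$ is a homeomorphism of $\hilb\times\R$ onto $\hilb\times M$ carrying solutions of the $\tilde{H}$-problem to solutions of~\eqref{parhamniel} with parameter in $M$, and preserving nontriviality, the \mbox{$j$-so}lution property, connectedness, and cluster points. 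Consequently the whole branch together with its limiting behaviour passes into $\hilb\times\R^k$, establishing that $(x_0,\lambda_0)$ is a branching point of nonstationary \mbox{$j$-so}lutions and a global bifurcation point of nontrivial \mbox{$j$-so}lutions of~\eqref{parhamniel}.
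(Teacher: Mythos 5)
Your proposal reproduces the paper's own proof in its entirety of structure: exactly as in the paper, you parametrize $M$ (the paper writes $H_1(x,s)=H(x,\varphi(s))$ with $\varphi(0)=\lambda_0$), use (E1$(x_0,\lambda_0)$) and (E2$(x_0,\lambda_0)$) to freeze the local index, so that
$\ind{\nabla_x H_1(\cdot,\pm\varepsilon)}{x_0}=\ind{\nabla_xH(\cdot,\lambda_0)}{x_0}\neq 0$
and $\eta_j(x_0,0)\neq 0$ reduces to the inequality
$\morse{Q_j(\nabla_x^2H(x_0,\varphi(\varepsilon)))}\neq\morse{Q_j(\nabla_x^2H(x_0,\varphi(-\varepsilon)))},$
and then invoke Theorem~\ref{genzdeg}; your verification that the hypotheses of Theorem~\ref{genzdeg} transfer under $\gamma$, and your transport of the branch back to $\hilb\times\R^k$, are the steps the paper compresses into ``it suffices to prove the conclusion for $H_1$ and $(x_0,0)$''.

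The divergence is at what you call the crux, and there your argument has a genuine gap. The paper at this point simply asserts that, $F_j$ being a $j$th detecting function whose restriction to $M$ changes sign at $\lambda_0$, the two Morse indices differ --- i.e.\ it applies condition (2) of the definition with the curve $M$ in place of the straight line $L$. You rightly notice that, read literally, axiom (2) speaks only of straight lines; but your proposed bridge does not close this. Nothing in the axioms ties the sign of an abstract detecting function $F_j$ to the parity of $\frac{1}{2}\morse{Q_j(\nabla_x^2H(x_0,\lambda))}$, i.e.\ to the sign of the canonical product $\nu_1\cdot\ldots\cdot\nu_{2n}$ of Remark~\ref{biffun}: both functions have constant sign on each connected component of the complement of their common zero set, but the correlation between the two signs can flip from one component to another. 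Axiom (2) only postulates the one-way implication ``sign change $\Rightarrow$ index change'', and only on lines; it is consistent with the axioms for $F_j$ to change sign across a wall where the index jumps by $4$ (so the canonical product does \emph{not} change sign there) and to keep its sign across a wall where the index jumps by $2$, as long as no straight line meets the offending configuration at an isolated sign-changing zero --- which can be arranged when two components of the complement abut only at the single point $\lambda_0$ (a pinched configuration that a curve $M$ can traverse but no line can). So ``reconciling the sign of $F_j$ with that of the canonical product on the two abutting components'' is precisely what cannot be deduced in general. For the concrete detecting functions of Remark~\ref{biffun} and of Lemma~\ref{biffunbldi} (formula~\eqref{Fj}) your parity argument is sound along \emph{any} continuous curve, because there the sign of $F_j$ off the zero set equals $(-1)^{\morse{Q_j(\nabla_x^2H(x_0,\lambda))}/2}$ up to a positive factor; for an arbitrary sequence satisfying (H3) the step needs either that stronger property as a hypothesis, or the reading --- evidently the paper's intent, since its proof uses it without comment --- that the detecting axiom applies along the curve in question rather than only along straight lines.
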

\begin{proof}
Let $\varphi\colon\R\rightarrow M$ be a~parametrization of $M$ such
that $\varphi(0)=\lambda_0$ and let
$H_1\colon\R^{2n}\times\R\rightarrow\R$ be the Hamiltonian defined
by $H_1(x,s)=H(x,\varphi(s)).$ It suffices
to prove the conclusion for $H_1$ and
$(x_0,0)\in\hilb\times\R$
instead of $H$ and $(x_0,\lambda_0)\in\hilb\times\R^k.$
By assumptions (E1$(x_0,\lambda_0)$), (E2$(x_0,\lambda_0)$) one has
$\ind{\nabla_x H_1(\cdot,\varepsilon)}{x_0}
=\ind{\nabla_x H_1(\cdot,-\varepsilon)}{x_0}
=\ind{\nabla_x H_1(\cdot,0)}{x_0}
=\ind{\nabla_xH(\cdot,\lambda_0)}{x_0}\neq 0,$
therefore
\begin{align*}
\eta_j(x_0,0) =\ind{\nabla_x H(\cdot,\lambda_0)}{x_0}
& \cdot\left(\frac{\morse{Q_j(\nabla_x^2H_1(x_0,\varepsilon))}}{2}
\right. \\
&\quad \left.
-\frac{\morse{Q_j(\nabla_x^2H_1(x_0,-\varepsilon))}}{2}\right).
\end{align*}
Since $F_{j}$ is a~\mbox{$j$th} detecting function and its
restriction to $M$ changes its sign at $\lambda_0,$ one has
\begin{align*}
\morse{Q_{j}(\nabla_x^2H_1(x_0,\varepsilon))} &
= \morse{Q_{j}(\nabla_x^2H(x_0,\varphi(\varepsilon))} \\
 & \neq \morse{Q_{j}(\nabla_x^2H(x_0,\varphi(-\varepsilon))} \\
& =\morse{Q_{j}(\nabla_x^2H_1(x_0,-\varepsilon)}.
\end{align*}
Thus $\eta_{j}(x_0,0)\neq 0,$ which implies that $(x_0,0)$ is
a~branching point of nonstationary \mbox{$j$-solu}\-tions and
a~global bifurcation point of nontrivial \mbox{$j$-solu}tions,
according to Theorem~\ref{genzdeg}.
\end{proof}

\begin{theorem}{\label{niezdeg}}
Let conditions (H1)-(H3) be satisfied. Assume that $M\subset\R^k$ is
a~continuous curve and $\lambda_0\in M$ is an isolated element of
the set
\begin{equation*}
\left(F_0^{-1}(\set{0})\cup\bigcup_{l\in\N}F_{lj}^{-1}(\set{0})\right)
\cap M
\end{equation*}
for some $j\in\N\cup\set{0}.$ If the restriction of $F_{j}$ to $M$
changes its sign at $\lambda_0$ then $(x_0,\lambda_0)$ is a~global
bifurcation point of nontrivial (possibly stationary)
\mbox{$j$-solu}tions of~\eqref{parhamniel}.
\end{theorem}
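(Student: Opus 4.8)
The plan is to reduce the statement to the one–parameter result Theorem~\ref{genniezd} by restricting $H$ to the curve $M$, exactly as in the proof of Theorem~\ref{zdeg}, and then to verify that the one–parameter bifurcation index $\eta_j(x_0,0)$ is nonzero. First I would choose a parametrization $\varphi\colon\R\to M$ with $\varphi(0)=\lambda_0$ and set $H_1(x,s)\eqdf H(x,\varphi(s))$. By (H2) one has $\nabla_x H_1(x_0,s)=0$ for every $s\in\R$, so $\set{x_0}\times\R$ is the trivial branch for $H_1$, and it suffices to prove the conclusion for $H_1$ and $(x_0,0)$ in place of $H$ and $(x_0,\lambda_0)$. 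Using Remark~\ref{inters} together with $\nabla_x^2 H_1(x_0,s)=\nabla_x^2 H(x_0,\varphi(s))$ one gets $\Lambda_m(\nabla_x^2 H_1(x_0,\cdot))=\varphi^{-1}(F_m^{-1}(\set{0}))$ for each $m\in\N\cup\set{0}$, so the hypothesis that $\lambda_0$ is isolated in $(F_0^{-1}(\set{0})\cup\bigcup_{l\in\N}F_{lj}^{-1}(\set{0}))\cap M$ translates precisely into the assertion that $0$ is isolated in $\Lambda_0(\nabla_x^2 H_1(x_0,\cdot))\cup\bigcup_{l\in\N}\Lambda_{lj}(\nabla_x^2 H_1(x_0,\cdot))$, which is the isolation hypothesis needed in Theorem~\ref{genniezd}.

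The remaining task is to show $\eta_j(x_0,0)\neq 0$. The key observation is that, since $\lambda_0$ is isolated in $F_0^{-1}(\set{0})\cap M$, the function $F_0\circ\varphi=\det Q_0(\nabla_x^2 H_1(x_0,\cdot))$ does not vanish for small $s\neq 0$; hence $\nabla_x^2 H_1(x_0,\pm\varepsilon)$ is nonsingular and $x_0$ is a nondegenerate, thus isolated, zero of $\nabla_x H_1(\cdot,\pm\varepsilon)$. This guarantees that $\eta_j(x_0,0)$ is well defined and, moreover, forces the one–sided indices $i_\pm\eqdf\ind{\nabla_x H_1(\cdot,\pm\varepsilon)}{x_0}$ to equal $\pm 1$, being the sign of $\det\nabla_x^2 H_1(x_0,\pm\varepsilon)$. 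For $j=0$, the sign change of $F_0$ on $M$ at $\lambda_0$ gives $i_+=-i_-$, whence $\eta_0(x_0,0)=i_+-i_-=\pm 2\neq 0$. For $j\in\N$, the defining property of a $j$th detecting function guarantees that the sign change of $F_j$ forces $m_\pm\eqdf\morse{Q_j(\nabla_x^2 H_1(x_0,\pm\varepsilon))}$ to differ, $m_+\neq m_-$.

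A short case analysis on the signs of $i_\pm$ then finishes the index computation, since $\eta_j(x_0,0)=i_+\tfrac{m_+}{2}-i_-\tfrac{m_-}{2}$. If $i_+=i_-$ this equals $\pm\tfrac{m_+-m_-}{2}\neq 0$; if $i_+=-i_-$ it equals $\pm\tfrac{m_++m_-}{2}$, which is nonzero because $m_+\neq m_-$ forces $m_++m_->0$. In either case $\eta_j(x_0,0)\neq 0$, so Theorem~\ref{genniezd} applies to $H_1$ and shows that $(x_0,0)$ is a global bifurcation point of nontrivial (possibly stationary) $j$-solutions; transferring back along $\varphi$ yields the conclusion for $(x_0,\lambda_0)$ and $H$.

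The step I expect to be the main obstacle is precisely the verification $\eta_j(x_0,0)\neq 0$ \emph{without} the hypotheses (E1$(x_0,\lambda_0)$) and (E2$(x_0,\lambda_0)$). Unlike in Theorem~\ref{zdeg}, here the two one–sided indices $i_+$ and $i_-$ may well differ -- this is exactly the regime in which a branch of nontrivial \emph{stationary} solutions can emanate from $(x_0,\lambda_0)$. What saves the argument is that isolation of $\lambda_0$ in $F_0^{-1}(\set{0})\cap M$ pins both one–sided indices down to $\pm 1$, after which the implication that $i_+ m_+=i_- m_-$ would force $m_+=m_-$ contradicts the sign-change hypothesis and rules out a vanishing index.
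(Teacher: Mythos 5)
Your proposal is correct and follows the paper's proof in its overall architecture: the same reduction $H_1(x,s)=H(x,\varphi(s))$ along a parametrization of $M$, the same translation of the isolation hypothesis into the one-parameter setting (via Remark~\ref{inters}), and the same final appeal to Theorem~\ref{genniezd} once $\eta_j(x_0,0)\neq 0$ is established. The one place you genuinely diverge is the mixed case $i_+=-i_-$, i.e.\ when $F_0$ changes sign along $M$ at $\lambda_0$: the paper never computes $\eta_j$ there, but instead notes $\eta_0(x_0,0)=\sgn F_0(\varphi(\varepsilon))-\sgn F_0(\varphi(-\varepsilon))\neq 0$, concludes global bifurcation of nontrivial \emph{stationary} solutions, and finishes by observing that stationary solutions are \mbox{$j$-solu}tions for every $j$; you instead evaluate $\eta_j(x_0,0)=i_+\frac{m_++m_-}{2}$ directly and use the detecting-function property ($m_+\neq m_-$, hence $m_++m_->0$) to see it is nonzero. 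Both routes are valid and rest on the same facts ($i_\pm=\sgn\det\nabla_x^2H_1(x_0,\pm\varepsilon)=\sgn F_0(\varphi(\pm\varepsilon))\in\set{\pm 1}$ because $\lambda_0$ is isolated in $F_0^{-1}(\set{0})\cap M$); your version buys slightly more, since it exhibits a nonzero $j$-th coordinate of the bifurcation index even in the regime where a stationary branch emanates, whereas the paper's case split keeps the computation at the level of $\eta_0$ and borrows the conclusion for $j$-solutions from the inclusion of isotropy groups. One shared caveat: the definition of a detecting function stipulates the Morse-index change only along straight lines, while both you and the paper (already in Theorem~\ref{zdeg}) invoke it along the continuous curve $M$; this is harmless for the concrete detecting functions of Remark~\ref{biffun} and Lemma~\ref{biffunbldi}, whose sign determines the parity of $\frac{1}{2}\morse{Q_j(\nabla_x^2H(x_0,\lambda))}$ and hence detects the index change along any path, so your usage is exactly as legitimate as the paper's.
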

\begin{proof}
Choose the parametrization $\varphi$ and the modified Hamiltonian
$H_1$ as in the proof of Theorem~\ref{zdeg}. By the assumption
there exists $\varepsilon>0$ such that $F_0(\varphi(s))\neq 0$ for
$s\in[-\varepsilon,\varepsilon]\backslash\set{0}.$ If the
restriction of $F_0$ to $M$ changes its sign at $\lambda_0$ then
\begin{align*}
\eta_0(x_0,0) &
=\sgn\det\nabla_x^2H_1(x_0,\varepsilon)
-\sgn\det\nabla_x^2H_1(x_0,-\varepsilon) \\
& =\sgn F_0(\varphi(\varepsilon))
-\sgn F_0(\varphi(-\varepsilon))\neq 0,
\end{align*}
hence $(x_0,0)$ is a~global bifurcation point of nontrivial
stationary solutions, which are \mbox{$j$-solu}tions for every
$j\in\N.$ Thus one can assume that the restriction of $F_0$ to $M$
does not change its sign at $\lambda_0$ (in particular,
$j\neq 0$). Then one has
\begin{equation*}
\sgn\det\nabla_x^2H_1(x_0,\varepsilon)
=\sgn\det\nabla_x^2H_1(x_0,-\varepsilon)
=\sgn F(\varphi(\varepsilon))\neq 0,
\end{equation*}
and
\begin{equation*}
\eta_{j}(x_0,0) =\sgn F(\varphi(\varepsilon))
\cdot\frac{\morse{Q_{j}(\nabla_x^2H_1(x_0,\varepsilon))}
-\morse{Q_{j}(\nabla_x^2H_1(x_0,-\varepsilon))}}{2}.
\end{equation*}
Thus $\eta_{j}(x_0,0)\neq 0,$ similarly as in the proof of
Theorem~\ref{zdeg}.
\end{proof}

\begin{remark}{\label{signum}}
Let $k\geq 2,$ $\lambda_0\in\R^k,$ $r\in\N,$ $F\in C^r(\R^k,\R),$
$F(\lambda_0)=0,$ $\nabla F(\lambda_0)\neq 0.$ Then there exists
a~neighbourhood $U\subset\R^k$ of $\lambda_0,$ such that
$\Gamma=F^{-1}(\set{0})\cap U$ is a~\mbox{$(k-1)$-di}mensional
manifold of class $C^r.$ Note that if $L$ is a~one
dimensional linear subspace of $\R^k$ such that
$L\not\subset T_{\lambda_0}\Gamma$ then the restriction of $F$ to
the straight line $L_{\lambda_0}=\lambda_0+L$ has an isolated zero
at $\lambda_0$ and changes its sign at $\lambda_0.$
\end{remark}

Set
\begin{align*}
X_{j}(\lambda_0) &\eqdf\set{l\in\N\cup\set{0}\cond F_{lj}(\lambda_0)=0},
\quad j\in\N\cup\set{0}, \\
X_{j}^+(\lambda_0) &\eqdf\set{l\in\N\cond F_{lj}(\lambda_0)=0},
\quad j\in\N.
\end{align*}

In view of Remark~\ref{signum}, the next two theorems follow from
Theorems~\ref{zdeg} and~\ref{niezdeg} for
$M=L_{\lambda_0}=\lambda_0+L$, where
$L\not\subset T_{\lambda_0}(F_{lj}^{-1}(\set{0})\cap U)$
for all $l\in X_j^+(\lambda_0)$ and $l\in X_j(\lambda_0),$
respectively.

\begin{theorem}{\label{gradzdeg}}
Let conditions (H1)-(H3), (E1$(x_0,\lambda_0)$), and
(E2$(x_0,\lambda_0)$) be satisfied. Assume that
$F_{j}(\lambda_0)=0$ for some $j\in\N.$ If for all
$l\in X_{j}^+(\lambda_0)$ the functions $F_{lj}$ are of class $C^1$
in a~neighbourhood of $\lambda_0$ and
$\nabla F_{lj}(\lambda_0)\neq 0$
then $(x_0,\lambda_0)$ is a~branching point of
nonstationary \mbox{$j$-solu}tions of~\eqref{parhamniel}
and a~global bifurcation point of nontrivial \mbox{$j$-solu}tions.
\end{theorem}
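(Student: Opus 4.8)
The plan is to deduce the statement from Theorem~\ref{zdeg} by selecting a single straight line $M=L_{\lambda_0}=\lambda_0+L$ through $\lambda_0$ that is transverse, at $\lambda_0$, to all the relevant zero hypersurfaces of the detecting functions. Since $F_j(\lambda_0)=0$ we have $1\in X_j^+(\lambda_0)$, so $F_j$ is among the functions $F_{lj}$ that are assumed to be of class $C^1$ with nonvanishing gradient at $\lambda_0$; the goal is to arrange that the restriction $F_j|_M$ has an isolated sign-changing zero at $\lambda_0$ while simultaneously $\lambda_0$ stays isolated in $\bigcup_{l\in\N}F_{lj}^{-1}(\set{0})\cap M$, which are exactly the two hypotheses of Theorem~\ref{zdeg}.

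First I would control which functions matter near $\lambda_0$. By Lemma~\ref{maxgen} there is a bounded open neighbourhood $U$ of $\lambda_0$ on which only finitely many of the functions $F_m$ vanish anywhere, and on which every $F_m$ with $F_m(\lambda_0)\neq 0$ is nowhere zero; in particular $X_j^+(\lambda_0)$ is finite and the only functions $F_{lj}$, $l\in\N$, that can vanish anywhere in $U$ are those with $l\in X_j^+(\lambda_0)$. For each such $l$ the hypothesis gives $F_{lj}\in C^1$ near $\lambda_0$ with $\nabla F_{lj}(\lambda_0)\neq 0$, so by the implicit function theorem (as in Remark~\ref{signum}), after shrinking $U$, the set $\Gamma_{lj}=F_{lj}^{-1}(\set{0})\cap U$ is a $(k-1)$-dimensional $C^1$ manifold with a well-defined tangent hyperplane $T_{\lambda_0}\Gamma_{lj}\subset\R^k$.

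Next I would choose the direction of $M$. As $X_j^+(\lambda_0)$ is finite and each $T_{\lambda_0}\Gamma_{lj}$ is a proper subspace of $\R^k$, their union cannot cover $\R^k$, so I may pick a vector $v$ lying in none of them and set $L=\spann\set{v}$, $M=L_{\lambda_0}$. Then, by Remark~\ref{signum}, for every $l\in X_j^+(\lambda_0)$ the restriction $F_{lj}|_M$ has an isolated zero at $\lambda_0$, and for $l=1$ the function $F_j|_M$ additionally changes sign there. Combining this with the fact that the remaining $F_{lj}$ do not vanish near $\lambda_0$ shows that $\lambda_0$ is isolated in $\bigcup_{l\in\N}F_{lj}^{-1}(\set{0})\cap M$. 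The hypotheses of Theorem~\ref{zdeg} are then met, and it yields that $(x_0,\lambda_0)$ is a branching point of nonstationary $j$-solutions and a global bifurcation point of nontrivial $j$-solutions.

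The main obstacle is the simultaneous transversality step: the line $M$ must be chosen once and for all so that \emph{every} $F_{lj}$ with $l\in X_j^+(\lambda_0)$ has an isolated zero on $M$ at $\lambda_0$ (otherwise $\lambda_0$ could fail to be isolated in the union of zero sets), while $F_j$ changes sign. This is what forces the preliminary finiteness argument via Lemma~\ref{maxgen} and the avoidance of finitely many tangent hyperplanes; the case $k=1$ is immediate, since then $F_j'(\lambda_0)\neq 0$ already gives an isolated sign-changing zero and one takes $M=\R$.
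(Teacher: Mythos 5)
Your proposal is correct and follows essentially the same route as the paper, which derives Theorem~\ref{gradzdeg} from Theorem~\ref{zdeg} and Remark~\ref{signum} by choosing $M=\lambda_0+L$ with $L\not\subset T_{\lambda_0}(F_{lj}^{-1}(\set{0})\cap U)$ for all $l\in X_j^+(\lambda_0)$. You merely make explicit the details the paper leaves implicit (finiteness of $X_j^+(\lambda_0)$ via Lemma~\ref{maxgen}, the fact that finitely many hyperplanes cannot cover $\R^k$, and the trivial case $k=1$), all of which are handled correctly.
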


\begin{theorem}{\label{gradniezdeg}}
Let conditions (H1)-(H3) be satisfied. Assume that
$F_{j}(\lambda_0)=0$ for some $j\in\N\cup\set{0}.$ If for all
$l\in X_{j}(\lambda_0)$ the functions $F_{lj}$ are of class $C^1$
in a~neighbourhood of $\lambda_0$ and
$\nabla F_{lj}(\lambda_0)\neq 0$
then $(x_0,\lambda_0)$ is a~global bifurcation point of nontrivial
(possibly stationary) \mbox{$j$-solu}tions of~\eqref{parhamniel}.
\end{theorem}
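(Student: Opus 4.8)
The plan is to deduce the statement from Theorem~\ref{niezdeg} by producing a straight line $M=L_{\lambda_0}=\lambda_0+L$ through $\lambda_0$ along which $F_j$ has an isolated sign change at $\lambda_0$ and along which $\lambda_0$ is isolated in the relevant union of zero sets. The direction $L$ will be a one–dimensional linear subspace chosen transverse to the zero–level manifolds of the finitely many detecting functions that vanish at $\lambda_0$. The degenerate case $k=1$ is handled directly (one takes $M=\R$ and applies Theorem~\ref{genniezd}, using that an $F_{lj}$ of class $C^1$ with nonzero derivative has an isolated sign change), so I would assume $k\geq 2$, where Remark~\ref{signum} applies.

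First I would record that only finitely many distinct functions are involved. By Lemma~\ref{maxgen} the set $\set{m\in\N\cup\set{0}\cond F_m(\lambda_0)=0}$ is finite; for $j\in\N$ the map $l\mapsto lj$ is injective, so $X_j(\lambda_0)$ is finite, while for $j=0$ every $F_{lj}$ coincides with $F_0$, so a single function occurs. In either case the functions $F_{lj}$, $l\in X_j(\lambda_0)$, that vanish at $\lambda_0$ are finite in number, and by hypothesis each is of class $C^1$ near $\lambda_0$ with nonvanishing gradient. Hence, by Remark~\ref{signum}, for a small enough neighbourhood $U$ of $\lambda_0$ each set $\Gamma_l=F_{lj}^{-1}(\set{0})\cap U$ is a $(k-1)$–dimensional $C^1$ manifold whose tangent space $T_{\lambda_0}\Gamma_l$ is a hyperplane in $\R^k$.

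Next I would fix the direction of the line. Since $\R^k$ (for $k\geq 2$) is not a finite union of proper linear subspaces, I can choose a vector $v$ lying outside $\bigcup_{l\in X_j(\lambda_0)}T_{\lambda_0}\Gamma_l$ and set $L=\spann\set{v}$, $M=\lambda_0+L$. Then $L\not\subset T_{\lambda_0}\Gamma_l$ for every $l\in X_j(\lambda_0)$, so by Remark~\ref{signum} the restriction of each such $F_{lj}$ to $M$ has an isolated zero at $\lambda_0$ and changes sign there. In particular, taking $l=1$ when $j\in\N$ (note $1\in X_j(\lambda_0)$ since $F_j(\lambda_0)=0$), or using $F_0=F_{0\cdot j}$ when $j=0$, the restriction $F_j|_M$ changes sign at $\lambda_0$, which is exactly the sign-change hypothesis of Theorem~\ref{niezdeg}.

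The step I expect to be the main obstacle is verifying the isolation requirement of Theorem~\ref{niezdeg}, namely that $\lambda_0$ is isolated in $\left(F_0^{-1}(\set{0})\cup\bigcup_{l\in\N}F_{lj}^{-1}(\set{0})\right)\cap M$, because this union involves infinitely many functions $F_0,F_j,F_{2j},\dots$ and transversality alone controls only the finitely many that vanish at $\lambda_0$. To close this gap I would invoke the second assertion of Lemma~\ref{maxgen} to shrink $U$ so that every $F_m$ with $m\in\set{0}\cup j\N$ and $F_m(\lambda_0)\neq 0$ stays nonzero on $U$; then on $M\cap U$ the only functions of the union that can vanish are the finitely many $F_{lj}$, $l\in X_j(\lambda_0)$, each already having $\lambda_0$ as its unique zero on $M$ near $\lambda_0$ by the previous step. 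Intersecting these finitely many neighbourhoods makes $\lambda_0$ isolated in the full union along $M$. With both hypotheses of Theorem~\ref{niezdeg} established for this $M$ and this $j$, the conclusion that $(x_0,\lambda_0)$ is a global bifurcation point of nontrivial (possibly stationary) \mbox{$j$-so}lutions follows at once.
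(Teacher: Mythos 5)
Your proposal is correct and follows exactly the paper's route: the paper derives Theorem~\ref{gradniezdeg} from Theorem~\ref{niezdeg} via Remark~\ref{signum}, taking $M=\lambda_0+L$ with $L\not\subset T_{\lambda_0}(F_{lj}^{-1}(\set{0})\cap U)$ for all $l\in X_j(\lambda_0)$, with Lemma~\ref{maxgen} guaranteeing the neighbourhood that kills the infinitely many nonvanishing $F_m.$ Your write-up merely makes explicit what the paper leaves implicit -- the finiteness of $X_j(\lambda_0),$ the existence of a direction outside the finitely many tangent hyperplanes, and the separate (trivial) case $k=1$ -- so there is nothing to add.
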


In view of Lemma~\ref{isotropy} and Remark~\ref{inters} one obtains
the following two pairs of corollaries to Theorems~\ref{gradzdeg}
and~\ref{gradniezdeg}, concerning symmetry breaking.
First consider the case of only one type of solutions in
a~neighbourhood of $(x_0,\lambda_0).$

\begin{corollary}{\label{wnzdegpure}}
Let conditions (H1)-(H3), (E1$(x_0,\lambda_0)$), and
(E2$(x_0,\lambda_0))$) be satisfied.
Fix $j\in\N.$ If $F_{j}$ is of class $C^1$ in a~neighbourhood of
$\lambda_0,$
$F_{j}(\lambda_0)=0,$ $\nabla F_{j}(\lambda_0)\neq 0,$ and
$F_{l}(\lambda_0)\neq 0$ for all $l\in\N,$ $l\neq j,$ then
$(x_0,\lambda_0)$ is a~global bifurcation point of nontrivial
\mbox{$j$-solu}tions of~\eqref{parhamniel}. Moreover, it is
a~branching point of nonstationary
solutions with the minimal period
$\frac{2\pi}{j},$ but it is not a~symmetry breaking point.
\end{corollary}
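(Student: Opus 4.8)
The plan is to deduce the global-bifurcation and branching statements from Theorem~\ref{gradzdeg}, and then to pin down the isotropy of the nearby nontrivial solutions by means of Lemma~\ref{isotropy} in order to obtain both the minimal-period refinement and the absence of symmetry breaking.

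First I would check that the hypotheses of Theorem~\ref{gradzdeg} are satisfied with $X_j^+(\lambda_0)=\set{1}$. Indeed, $F_j(\lambda_0)=0$ gives $1\in X_j^+(\lambda_0)$, while for every integer $l\geq 2$ the index $lj$ is a positive integer different from $j$, so the hypothesis that $F_l(\lambda_0)\neq 0$ for all $l\in\N,$ $l\neq j,$ yields $F_{lj}(\lambda_0)\neq 0$; hence $X_j^+(\lambda_0)=\set{1}$. The remaining requirement of Theorem~\ref{gradzdeg}, namely that $F_{lj}$ be of class $C^1$ near $\lambda_0$ with $\nabla F_{lj}(\lambda_0)\neq 0$ for every $l\in X_j^+(\lambda_0)$, therefore reduces to the stated conditions on $F_j$ itself. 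Applying Theorem~\ref{gradzdeg} shows that $(x_0,\lambda_0)$ is a branching point of nonstationary $j$-solutions of~\eqref{parhamniel} and a global bifurcation point of nontrivial $j$-solutions.

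Next I would determine the admissible isotropy groups near $(x_0,\lambda_0)$. By Remark~\ref{inters} one has $X(\lambda_0)=\set{i\in\N\cup\set{0}\cond F_i(\lambda_0)=0}$, which by the hypotheses is contained in $\set{0,j}$ and contains $j$; consequently $E(\lambda_0)\subset E_0\oplus E_j$ and $G(\lambda_0)\subset\set{\SO(2),\Z_j}$. Lemma~\ref{isotropy} then provides a neighbourhood $U$ of $(x_0,\lambda_0)$ in which every nontrivial solution has isotropy group $\SO(2)$ or $\Z_j$. The case $F_0(\lambda_0)=0$ is not excluded, so $\SO(2)$ may a priori occur; this is precisely where assumption (E1$(x_0,\lambda_0)$) enters, for shrinking $U$ inside the neighbourhood it furnishes removes all nontrivial stationary (isotropy $\SO(2)$) solutions. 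Hence every nontrivial solution in $U$ has isotropy group exactly $\Z_j$, i.e. is nonstationary with minimal period $\frac{2\pi}{j}$ by Remark~\ref{isper}.

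Finally I would assemble the two refinements. Within $U$ the nonstationary $j$-solutions coincide with the nonstationary solutions of minimal period $\frac{2\pi}{j}$, so the connected branch of nonstationary $j$-solutions accumulating at $(x_0,\lambda_0)$ witnesses, after restriction to $U$, that $(x_0,\lambda_0)$ is a branching point of nonstationary solutions with minimal period $\frac{2\pi}{j}$. For the last assertion, observe that $U$ contains no two nontrivial solutions with distinct isotropy groups, since all of them have isotropy $\Z_j$; by Definition~\ref{symbrdef} the point $(x_0,\lambda_0)$ is therefore not a symmetry breaking point. I expect the only delicate point to be the isotropy bookkeeping of the second step---in particular the need to invoke (E1$(x_0,\lambda_0)$) to discard the stationary ($\SO(2)$) solutions that a vanishing $F_0(\lambda_0)$ would otherwise permit---whereas the localisation of the branch to $U$ in the last step is routine.
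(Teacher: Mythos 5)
Your proposal is correct and takes essentially the same route as the paper, which obtains this corollary precisely by applying Theorem~\ref{gradzdeg} (noting, as you do, that the hypotheses force $X_j^+(\lambda_0)=\set{1}$) and then invoking Lemma~\ref{isotropy} with Remark~\ref{inters}, plus condition (E1$(x_0,\lambda_0)$) to discard stationary solutions, so that every nontrivial solution near $(x_0,\lambda_0)$ has isotropy exactly $\Z_j$ — giving the minimal-period refinement and ruling out symmetry breaking by Definition~\ref{symbrdef}. The one point you flag as routine — localising the connected branch to the neighbourhood $U$ (which rests on compactness via Lemma~\ref{compbranch} and a Whyburn-type argument) — is likewise left implicit in the paper, so your treatment matches its level of detail.
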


\begin{corollary}{\label{wnniezdpure}}
Let conditions (H1)-(H3) be satisfied. Fix $j\in\N\cup\set{0}.$ If
$F_{j}$ is of class $C^1$ in a~neighbourhood of  $\lambda_0,$
$F_{j}(\lambda_0)=0,$ $\nabla F_{j}(\lambda_0)\neq 0,$ and
$F_{l}(\lambda_0)\neq 0$ for all $l\in\N\cup\set{0},$ $l\neq j,$
then $(x_0,\lambda_0)$ is a~global bifurcation point of nontrivial
\mbox{$j$-solu}tions of~\eqref{parhamniel}. Moreover, it is
is a~branching point of
nonstationary solutions with the minimal
period $\frac{2\pi}{j}$ if $j\in\N,$ and nontrivial stationary
solutions if $j=0,$ but it is not a~symmetry breaking point.
\end{corollary}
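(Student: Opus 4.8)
The plan is to read the first assertion off Theorem~\ref{gradniezdeg} and to obtain the refined description of the bifurcating branch, together with the absence of symmetry breaking, from Lemma~\ref{isotropy} and Remark~\ref{inters}; no new degree computation is needed. First I would verify the hypotheses of Theorem~\ref{gradniezdeg}. By Remark~\ref{inters} the set $X_j(\lambda_0)=\set{l\in\N\cup\set{0}\cond F_{lj}(\lambda_0)=0}$ is governed entirely by the vanishing of the detecting functions. The standing assumption that $F_l(\lambda_0)\neq 0$ for all $l\neq j$ while $F_j(\lambda_0)=0$ splits into two cases. For $j\in\N$ the only multiple $lj$ of $j$ at which a detecting function vanishes is $j$ itself (note $F_0(\lambda_0)\neq 0$ excludes $l=0$), so $X_j(\lambda_0)=\set{1}$ and the requirement ``$F_{lj}\in C^1$ near $\lambda_0$ and $\nabla F_{lj}(\lambda_0)\neq 0$ for $l\in X_j(\lambda_0)$'' is precisely the assumption made on $F_j$. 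For $j=0$ every index $lj$ equals $0$, so that requirement collapses to a single hypothesis on $F_0$, again exactly what is assumed. Thus Theorem~\ref{gradniezdeg} applies and $(x_0,\lambda_0)$ is a global bifurcation point of nontrivial (possibly stationary) \mbox{$j$-solu}tions of~\eqref{parhamniel}; being a global bifurcation point, it lies in the closure of a nontrivial connected set of such solutions and is in particular a branching point of them.

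Next I would pin down the isotropy type of the bifurcating solutions. By Remark~\ref{inters} the set $X(\lambda_0)=\set{i\in\N\cup\set{0}\cond F_i(\lambda_0)=0}$ equals $\set{j}$ under our hypotheses, so in Lemma~\ref{isotropy} one has $E(\lambda_0)=E_j$. For $j\in\N$ every nonzero element of $E_j$ has isotropy group $\Z_j$, while for $j=0$ every element of $E_0$ has isotropy group $\SO(2)$; hence the set $G(\lambda_0)$ of admissible isotropy groups is the singleton $\set{\K_j}$. Lemma~\ref{isotropy} then produces a neighbourhood $U$ of $(x_0,\lambda_0)$ in which every nontrivial solution of~\eqref{parhamniel} has isotropy group exactly $\K_j$. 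For $j\in\N$ this forces, via Remark~\ref{isper}, every such solution to be nonstationary with minimal period $\frac{2\pi}{j}$; for $j=0$ it forces every such solution to be constant, i.e.\ a nontrivial stationary solution. Intersecting the continuum furnished by the first step with $U$ therefore identifies the branching set with solutions of precisely the asserted type.

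Finally, the absence of symmetry breaking is immediate from the same computation: since $G(\lambda_0)$ is a singleton, no neighbourhood of $(x_0,\lambda_0)$ can contain two nontrivial solutions with distinct isotropy groups, so by Definition~\ref{symbrdef} the point is not a symmetry breaking point. I expect the only genuinely delicate part to be the bookkeeping that separates the cases $j\in\N$ and $j=0$ when checking the hypotheses of Theorem~\ref{gradniezdeg}, and the careful application of Lemma~\ref{isotropy} to upgrade the description from ``nontrivial \mbox{$j$-solu}tion'' to ``isotropy exactly $\K_j$''; the substantive topological content—the existence of the global branch—is already supplied by Theorem~\ref{gradniezdeg}.
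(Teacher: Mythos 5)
Your route is the paper's route: the corollary is stated there as a consequence of Theorem~\ref{gradniezdeg} ``in view of Lemma~\ref{isotropy} and Remark~\ref{inters}'', and your bookkeeping is exactly the intended one --- for $j\in\N$ the hypotheses $F_l(\lambda_0)\neq 0$ ($l\neq j$) give $X_j(\lambda_0)=\set{1}$, for $j=0$ the condition collapses to the single hypothesis on $F_0$, and $X(\lambda_0)=\set{j}$ forces $E(\lambda_0)=E_j$ and $G(\lambda_0)=\set{\K_j}$, which yields both the identification of the minimal period and, via Definition~\ref{symbrdef}, the absence of symmetry breaking. All of that is correct.

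There is, however, one step that does not survive scrutiny: your claim that ``being a global bifurcation point, it lies in the closure of a nontrivial connected set of such solutions and is in particular a branching point of them.'' Under Definition~\ref{genbifdef} this implication is not formal: a global bifurcation point is defined through the connected component of $\cl{X}$ containing $(x_0,\lambda_0)$, whereas a branching point requires a connected set $C\subset X$ itself with $(x_0,\lambda_0)\in\cl{C}$, and a comb-type configuration ($X$ a disjoint union of arcs whose closure contributes an unbounded connected set through the point) satisfies the former without the latter. The paper itself is careful about this distinction --- compare Theorem~\ref{genzdeg}, which asserts branching \emph{and} global bifurcation, with Theorem~\ref{genniezd} and hence Theorem~\ref{gradniezdeg}, which assert only global bifurcation --- so the branching clause of the corollary cannot be read off Theorem~\ref{gradniezdeg} alone. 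The repair is short. For $j\in\N$ your own hypotheses include $F_0(\lambda_0)\neq 0$, i.e. $\det\nabla_x^2H(x_0,\lambda_0)\neq 0$; by the implicit function theorem together with (H2) this gives (E1$(x_0,\lambda_0)$), and $\ind{\nabla_xH(\cdot,\lambda_0)}{x_0}=\pm 1\neq 0$ gives (E2$(x_0,\lambda_0)$), so Theorem~\ref{gradzdeg} applies and directly furnishes a branching point of \emph{nonstationary} $j$-solutions, which by your Lemma~\ref{isotropy} step have minimal period exactly $\frac{2\pi}{j}$ near $(x_0,\lambda_0)$. For $j=0$, branching of nontrivial stationary solutions comes from the underlying Theorem~\ref{genbif}: as in the proof of Theorem~\ref{niezdeg}, $\nabla F_0(\lambda_0)\neq 0$ provides a line along which $F_0$ changes sign, whence $\eta_0\neq 0$, and Theorem~\ref{genbif} (whose conclusion, unlike that of Theorem~\ref{genniezd}, is a branching statement) applies. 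Note also that your final sentence ``intersecting the continuum with $U$'' involves the same point-set subtlety (a connected set cut by a ball need not leave a connected piece clustering at the point); it is harmless once the branching is sourced as above, since the degree-theoretic proof of Theorem~\ref{genbif} produces connected sets of nontrivial $j$-solutions inside arbitrarily small neighbourhoods, where Lemma~\ref{isotropy} pins the isotropy to $\K_j$.
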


The assumption of the next two corollaries, in which symmetry
breaking occurs, imply that $j_1$ and $j_2$ are relatively prime.

\begin{corollary}
Let conditions (H1)-(H3), (E1$(x_0,\lambda_0)$), and
(E2$(x_0,\lambda_0)$) be satisfied. Fix $j_1,j_2\in\N$ and assume
that for $i=1,2$ the functions $F_{j_i}$ are of class
$C^1$ in a~neighbourhood of $\lambda_0,$ $F_{j_i}(\lambda_0)=0,$
$\nabla F_{j_i}(\lambda_0)\neq 0,$ and $F_{lj_i}(\lambda_0)\neq 0$
for all $l\in\N,$ $l\geq 2.$ Then $(x_0,\lambda_0)$ is a~symmetry
breaking point. Namely, it is a~branching point of
nonstationary solutions of~\eqref{parhamniel} with the minimal
period $\frac{2\pi}{j_1}$ and solutions with the minimal period
$\frac{2\pi}{j_2}.$ Moreover, it is a~global bifurcation point
of nontrivial \mbox{$j_1$-solu}tions and \mbox{$j_2$-solu}tions.
\end{corollary}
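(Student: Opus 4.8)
The plan is to apply Theorem~\ref{gradzdeg} twice, once with $j=j_1$ and once with $j=j_2$, and then to upgrade the resulting ``period $\frac{2\pi}{j_i}$'' to ``minimal period $\frac{2\pi}{j_i}$'' by means of Lemma~\ref{isotropy}, so as to produce two families of nontrivial solutions carrying distinct isotropy groups near $(x_0,\lambda_0)$.

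First I would check the hypotheses of Theorem~\ref{gradzdeg} for $j=j_i$, $i=1,2$. Since $F_{j_i}(\lambda_0)=0$ and $F_{lj_i}(\lambda_0)\neq 0$ for every $l\geq 2$, one has $X_{j_i}^+(\lambda_0)=\set{1}$, so the only function to be examined is $F_{j_i}$ itself, which by assumption is of class $C^1$ near $\lambda_0$ with $\nabla F_{j_i}(\lambda_0)\neq 0$. As (H1)--(H3), (E1$(x_0,\lambda_0)$), and (E2$(x_0,\lambda_0)$) hold, Theorem~\ref{gradzdeg} yields that $(x_0,\lambda_0)$ is a branching point of nonstationary $j_i$-solutions and a global bifurcation point of nontrivial $j_i$-solutions, for each $i$. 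This already gives the global bifurcation assertions. I would also record at this point that the hypotheses force that neither of $j_1,j_2$ divides the other: if, say, $j_2=lj_1$ with $l\geq 2$, then $F_{j_2}(\lambda_0)=F_{lj_1}(\lambda_0)\neq 0$, contradicting $F_{j_2}(\lambda_0)=0$; in particular $j_1\neq j_2$.

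The heart of the argument is to show that every nonstationary $j_i$-solution $x$ lying close to $(x_0,\lambda_0)$ has isotropy group exactly $\Z_{j_i}$, i.e.\ minimal period exactly $\frac{2\pi}{j_i}$. Being a nonstationary $j_i$-solution, $x$ satisfies $\SO(2)_x=\Z_m$ with $j_i\mid m$ (Remark~\ref{isper}). By Lemma~\ref{isotropy} and Remark~\ref{inters}, $\Z_m$ belongs to $G(\lambda_0)$, hence $\Z_m$ is an intersection of groups $\K_r$ with $r\in X(\lambda_0)=\set{r\cond F_r(\lambda_0)=0}$; since such intersections are computed by greatest common divisors, $m=\gcd(S)$ for some nonempty $S\subset X(\lambda_0)\setminus\set{0}$. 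Then $j_i\mid m\mid r$ for every $r\in S$, so each $r\in S$ is a multiple $lj_i$ of $j_i$ with $F_{lj_i}(\lambda_0)=0$; the hypothesis $F_{lj_i}(\lambda_0)\neq 0$ for $l\geq 2$ forces $l=1$, whence $S=\set{j_i}$ and $m=j_i$, as claimed.

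Combining the two branches then finishes the proof: every neighbourhood of $(x_0,\lambda_0)$ contains a nonstationary solution of minimal period $\frac{2\pi}{j_1}$ and one of minimal period $\frac{2\pi}{j_2}$, whose isotropy groups $\Z_{j_1}$ and $\Z_{j_2}$ are distinct because $j_1\neq j_2$; by Definition~\ref{symbrdef} this makes $(x_0,\lambda_0)$ a symmetry breaking point, and the two branching statements are immediate. I expect the main obstacle to be the third paragraph: Theorem~\ref{gradzdeg} only supplies solutions whose period $\frac{2\pi}{j_i}$ need not be minimal, and converting ``period'' into ``minimal period'' requires the local structure from Lemma~\ref{isotropy} to rule out any larger isotropy $\Z_m$, $m>j_i$; this is precisely where the assumption $F_{lj_i}(\lambda_0)\neq 0$ for $l\geq 2$ enters.
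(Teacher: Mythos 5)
Your proposal is correct and takes essentially the same route the paper intends: the paper states this corollary without separate proof as a consequence of Theorem~\ref{gradzdeg} (applied with $X_{j_i}^+(\lambda_0)=\set{1}$ for $i=1,2$) together with Lemma~\ref{isotropy} and Remark~\ref{inters}, and your gcd computation showing that every nearby nonstationary $j_i$-solution has isotropy exactly $\Z_{j_i}$ is precisely the step those references are cited for. One minor caveat: your divisibility argument only excludes $j_2=lj_1$ with $l\geq 2$, so ``in particular $j_1\neq j_2$'' does not literally follow and distinctness must be read as tacit in the hypotheses --- but the paper makes the same tacit assumption (its preceding remark that the hypotheses force $j_1$ and $j_2$ to be relatively prime is itself imprecise).
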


\begin{corollary}
Let conditions (H1)-(H3) be satisfied. Fix $j_1,j_2\in\N$ and assume
that for $i=1,2$ the functions $F_{j_i}$ are of class $C^1$ in
a~neighbourhood of $\lambda_0,$ $F_{j_i}(\lambda_0)=0,$
$\nabla F_{j_i}(\lambda_0)\neq 0,$ and $F_{lj_i}(\lambda_0)\neq 0$
for all $l\in\N\cup\set{0},$ $l\neq 1.$ Then $(x_0,\lambda_0)$ is
a~symmetry breaking point. Namely, it is a~branching point of
nonstationary solutions of~\eqref{parhamniel}
with the minimal period $\frac{2\pi}{j_1}$ and solutions with
the minimal period $\frac{2\pi}{j_2}.$ Moreover, it is a~global
bifurcation point of nontrivial \mbox{$j_1$-solu}tions and
\mbox{$j_2$-solu}tions.
\end{corollary}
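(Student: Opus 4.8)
The plan is to recognise that the present hypotheses differ from those of the preceding (symmetry breaking) corollary only in that (E1$(x_0,\lambda_0)$) and (E2$(x_0,\lambda_0)$) are dropped, while the non-vanishing condition is enlarged from $l\ge 2$ to all $l\neq 1$, in particular now also covering $l=0.$ I would first show that this single extra instance $l=0$ restores both (E1$(x_0,\lambda_0)$) and (E2$(x_0,\lambda_0)$), after which the statement is literally the conclusion of the preceding corollary.

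Taking $l=0$ in the hypothesis gives $F_0(\lambda_0)=F_{0\cdot j_1}(\lambda_0)\neq 0.$ Since $F_0=\det Q_0(\nabla_x^2H(x_0,\cdot))=\det(-\nabla_x^2H(x_0,\cdot))$ and $2n$ is even, this says $\det\nabla_x^2H(x_0,\lambda_0)\neq 0,$ so $\nabla_x^2H(x_0,\lambda_0)$ is nonsingular and $\ind{\nabla_xH(\cdot,\lambda_0)}{x_0}=\sgn F_0(\lambda_0)\neq 0,$ which is (E2$(x_0,\lambda_0)$). For (E1$(x_0,\lambda_0)$) I would argue by contradiction: were there nontrivial stationary solutions $(x_m,\lambda_m)\to(x_0,\lambda_0)$ with $x_m\neq x_0,$ then using $\nabla_xH(x_0,\lambda_m)=0$ from (H2) a first-order expansion gives $A_m(x_m-x_0)=0,$ where $A_m$ is the corresponding averaged Hessian tending to $\nabla_x^2H(x_0,\lambda_0);$ continuity of $\nabla_x^2H$ and its invertibility at $(x_0,\lambda_0)$ then force $x_m=x_0$ for large $m,$ a contradiction. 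As the required non-vanishing for $l\ge 2$ is contained in the assumed non-vanishing for $l\neq 1,$ and the conditions $\nabla F_{j_i}(\lambda_0)\neq 0$ are identical, every hypothesis of the preceding corollary now holds and its conclusion yields the claim verbatim.

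Should a self-contained argument be preferred, I would instead run the underlying mechanism: with (E1$(x_0,\lambda_0)$) and (E2$(x_0,\lambda_0)$) in hand, apply Theorem~\ref{gradzdeg} separately to $j=j_1$ and $j=j_2$ — for each $i$ one has $X_{j_i}^+(\lambda_0)=\set{1}$ by the $l\ge 2$ non-vanishing, so only $\nabla F_{j_i}(\lambda_0)\neq 0$ needs checking — obtaining that $(x_0,\lambda_0)$ branches nonstationary $j_i$-solutions and is a global bifurcation point of nontrivial $j_i$-solutions. The one substantive step is then pinning the minimal periods: a nonstationary $j_i$-solution near $(x_0,\lambda_0)$ has isotropy $\Z_{sj_i}$ for some $s\ge 1,$ and Lemma~\ref{isotropy} together with Remark~\ref{inters} forces $\Z_{sj_i}\subset\Z_r$ with $F_r(\lambda_0)=0,$ whence $r=msj_i$ and $F_{(ms)j_i}(\lambda_0)=0;$ the hypothesis $F_{lj_i}(\lambda_0)\neq 0$ for $l\ge 2$ gives $ms=1,$ so $s=1$ and the minimal period is precisely $\frac{2\pi}{j_i}.$ Since $j_1\neq j_2$ the two branches carry different isotropy groups, so $(x_0,\lambda_0)$ is a symmetry breaking point. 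I expect the derivation of (E1$(x_0,\lambda_0)$) from the nonsingular Hessian and this minimal-period determination via Lemma~\ref{isotropy} to be the only points requiring care; everything else is a direct citation of the results already established.
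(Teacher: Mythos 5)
Your proposal is correct, and both of your routes work; they differ mildly from what the paper intends. The paper states this corollary (together with its three companions) as a direct consequence of Theorem~\ref{gradniezdeg} combined with Lemma~\ref{isotropy} and Remark~\ref{inters}: since $F_{lj_i}(\lambda_0)\neq 0$ for all $l\neq 1$ \emph{including} $l=0,$ one has $X_{j_i}(\lambda_0)=\set{1},$ so Theorem~\ref{gradniezdeg} --- which, unlike Theorem~\ref{gradzdeg}, requires neither (E1$(x_0,\lambda_0)$) nor (E2$(x_0,\lambda_0)$) --- applies at once and gives global bifurcation of nontrivial (possibly stationary) $j_i$-solutions; then $F_0(\lambda_0)\neq 0$ puts $0\notin X(\lambda_0),$ so Lemma~\ref{isotropy} excludes nontrivial stationary solutions near $(x_0,\lambda_0)$ and the divisibility argument (as in the proof of Corollary~\ref{necniel}: isotropy $\Z_{sj_i}\subset\Z_r$ with $r=msj_i\in X(\lambda_0),$ forcing $ms=1$) pins the minimal period to exactly $\frac{2\pi}{j_i}.$ Your Route~2 is this same mechanism, merely channelled through Theorem~\ref{gradzdeg} after first securing (E1) and (E2); your minimal-period step is precisely the paper's use of Lemma~\ref{isotropy} and Remark~\ref{inters}. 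Your Route~1 is the genuinely different (and attractive) part: converting $F_0(\lambda_0)\neq 0$ into (E2) via $\det Q_0(\nabla_x^2H(x_0,\lambda_0))=\det\nabla_x^2H(x_0,\lambda_0)$ (even dimension $2n$) and into (E1) via the averaged-Hessian/mean-value argument --- both steps are sound, using only the joint continuity of $\nabla_x^2H$ guaranteed by $C^{2,0}$ and (H2) --- so that the statement becomes a literal special case of the preceding corollary, an observation the paper does not make. What the paper's route buys is economy (no need to prove (E1) at all, since Theorem~\ref{gradniezdeg} dispenses with it); what yours buys is the explicit reduction between the two corollaries and, in Route~2, a branching of nonstationary solutions obtained directly from Theorem~\ref{gradzdeg} rather than by excluding stationary ones afterwards. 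The only point glossed in both your write-up and the paper is the routine connectedness bookkeeping needed to pass from the branch of nonstationary $j_i$-solutions to a connected set of solutions with minimal period exactly $\frac{2\pi}{j_i}$ (intersecting the branch with the neighbourhood $U$ of Lemma~\ref{isotropy}), which is standard and consistent with the paper's level of detail.
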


\section{The structure of the set of bifurcation
points}{\label{structsection}}

In this section the results from Section~\ref{gen}
and~\cite{Sf,SfMA} are applied to the description of the
structure of the set of bifurcation points of solutions
of~\eqref{parhamniel}.

Let $\Bif(x_0)$ and $\GlBif(x_0)$ be the sets of those
$\lambda\in\R^k$ for which $(x_0,\lambda)$ is, respectively,
a~bifurcation point and a~global bifurcation point of nontrivial
solutions of~\eqref{parhamniel}. Similarly, for every
$j\in\N\cup\set{0}$
let $\Bif_j(x_0)$ and $\GlBif_j(x_0)$ denote the sets of those
$\lambda\in\R^k$ for which $(x_0,\lambda)$ is, respectively,
a~bifurcation point and a~global bifurcation point of nontrivial
\mbox{$j$-so}lutions of~\eqref{parhamniel}. Finally, for every
$j\in\N\cup\set{0}$ let the subsets
$\Bif_j^{min}(x_0)\subset\Bif_j(x_0),$
$\GlBif_j^{min}(x_0)\subset\GlBif_j(x_0)$
consist of those $\lambda$ for which $(x_0,\lambda)$ is, respectively,
a~bifurcation point and a~branching point of nonstationary
solutions of~\eqref{parhamniel} with the minimal period
$\frac{2\pi}{j}$ if $j\in\N,$ and nontrivial stationary solutions
if $j=0.$

Let
\begin{align*}
X(\lambda)& \eqdf \set{j\in\N\cup\set{0}\cond F_j(\lambda)=0},\\
X^+(\lambda)& \eqdf \set{j\in\N\cond F_j(\lambda)=0}
=X(\lambda)\backslash\set{0},\\
X_{j}(\lambda)&
\eqdf \set{l\in\N\cup\set{0}\cond F_{lj}(\lambda)=0},
\quad j\in\N\cup\set{0},\\
X_{j}^+(\lambda)& \eqdf \set{l\in\N\cond F_{lj}(\lambda)=0}
=X_j(\lambda)\backslash\set{0}, \quad j\in\N.
\end{align*}
Set also
$\Psing(F)\eqdf F^{-1}(\set{0})\cap\nabla F^{-1}(\set{0}).$

As it is shown in the subsequent part of this paper,
Theorems~\ref{strzdegwh}-\ref{strniezd} bellow provide
a~constructive description of the set of bifurcation points of
solutions of~\eqref{parhamniel} which can be used both to obtain
qualitative results by applying theorems of real algebraic
geometry as well as in numerical computations for finding all
bifurcation points in given domain. Notice that the existence
of the neighbourhood $U$ of $\lambda_0$ is ensured by
Lemma~\ref{maxgen}.

\begin{theorem}{\label{strzdegwh}}
Let assumptions (H1)-(H3) be fulfilled and let $U\subset\R^k$ be
an~open neighbourhood of $\lambda_0\in\R^k$ such that the conditions
(E1$(x_0,\lambda)$), (E2$(x_0,\lambda)$), and
$F_m(\lambda)\neq 0$ are satisfied for every $\lambda\in U$ and
$m\in\N\backslash X^+(\lambda_0).$ If $X^+(\lambda_0)=\emptyset$
then $\Bif(x_0)\cap U=\emptyset.$ If
$X^+(\lambda_0)\neq\emptyset$ and $F_j,$ $j\in X^+(\lambda_0),$
are of class $C^1$ in $U$ then the following conclusions hold
for $\displaystyle F=\prod_{j\in X^+(\lambda_0)}F_{j}.$
\begin{enumerate}
\item{\label{bgenall}} $\displaystyle
      \Bif(x_0)\cap U\backslash\Psing(F)
      =\GlBif(x_0)\cap U\backslash\Psing(F) \\
      = F^{-1}(\set{0})\cap U \backslash\Psing(F)
      =\bigcup_{j\in X^+(\lambda_0)}F_{j}^{-1}(\set{0})
      \cap U\backslash\Psing(F).$
\item{\label{bgenfi}} For every $j\in X^+(\lambda_0)$ one has
      \begin{align*}
      \Bif_{j}^{min}(x_0)\cap U\backslash\Psing(F)
      & =\GlBif_{j}^{min}(x_0)\cap U\backslash\Psing(F) \\
      & =F_{j}^{-1}(\set{0})\cap U\backslash\Psing(F).
      \end{align*}
      The sets
      $\GlBif_{j}^{min}(x_0)\cap U\backslash\Psing(F),$
      $j\in X^+(\lambda_0),$ are pairwise disjoint.
\item{\label{bgenan}} If $F_j,$  $j\in X^+(\lambda_0),$
      are analytic in $U$ and $\overline{\lambda}$ is
      an isolated element of $\Psing(F)$ such that
      $\overline{\lambda}\in\cl{F_{j_0}^{-1}(\set{0})
      \backslash\Psing(F)}$ for some
      $j_0\in X^+(\lambda_0)$ then
      $\overline{\lambda}\in\Bif_{j_0}^{min}
      \cap \GlBif_{j_0}(x_0).$
\end{enumerate}
\end{theorem}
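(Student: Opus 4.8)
The plan is to sandwich each bifurcation set between a necessary condition (Corollary~\ref{necniel} and Lemma~\ref{isotropy}, rewritten through $F_j$ via Remark~\ref{inters}) and a sufficient condition (Corollary~\ref{wnzdegpure}), and to reduce everything on $U\backslash\Psing(F)$ to the behaviour of a single factor of $F=\prod_{j\in X^+(\lambda_0)}F_j$. The only algebraic input needed is the following: if $\lambda\in U$ then the choice of $U$ (obtained from Lemma~\ref{maxgen}) gives $F_m(\lambda)\neq 0$ for every $m\in\N\backslash X^+(\lambda_0)$, so the only positive indices at which a detecting function can vanish on $U$ lie in $X^+(\lambda_0)$; and by the product rule, $\nabla F(\lambda)\neq 0$ can hold only if \emph{exactly one} factor vanishes at $\lambda$, for if $F_{j_1}(\lambda)=F_{j_2}(\lambda)=0$ with $j_1\neq j_2$ then every term of $\nabla F(\lambda)$ still carries a vanishing factor, whence $\lambda\in\Psing(F)$. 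Thus on $U\backslash\Psing(F)$ each zero of $F$ is a zero of a \emph{unique} $F_{j_0}$, $j_0\in X^+(\lambda_0)$, and there $\nabla F_{j_0}(\lambda)\neq 0$ while $F_m(\lambda)\neq 0$ for all $m\in\N$, $m\neq j_0$ --- which is precisely the hypothesis list of Corollary~\ref{wnzdegpure}.

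For parts~\ref{bgenall} and~\ref{bgenfi} I would argue by inclusions. For the lower bound, at every $\lambda\in F^{-1}(\set{0})\cap U\backslash\Psing(F)$ (where (E1) and (E2) hold, since they hold on all of $U$) Corollary~\ref{wnzdegpure} applied with $j_0$ the unique vanishing index gives simultaneously that $(x_0,\lambda)$ is a branching point of nonstationary solutions of minimal period $\tfrac{2\pi}{j_0}$, hence $\lambda\in\GlBif_{j_0}^{min}(x_0)$, and a global bifurcation point of nontrivial $j_0$-solutions, hence $\lambda\in\GlBif(x_0)$. For the upper bound, if $\lambda\in\Bif(x_0)\cap U$ then (E1) excludes nontrivial stationary bifurcation throughout $U$, so by Lemma~\ref{isotropy} the clustering nontrivial solutions have finitely many isotropy groups and one may pass to a bifurcation of nonstationary $m$-solutions for a fixed $m\in\N$; Corollary~\ref{necniel} then forces $F_{lm}(\lambda)=0$ for some $l$, so $F(\lambda)=0$. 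Together with $\GlBif\subset\Bif$ this closes the chain in part~\ref{bgenall}, the final equality being $F^{-1}(\set{0})=\bigcup_j F_j^{-1}(\set{0})$. In part~\ref{bgenfi} the inclusion $F_{j}^{-1}(\set{0})\cap U\backslash\Psing(F)\subset\GlBif_{j}^{min}(x_0)$ is the same application of Corollary~\ref{wnzdegpure} with $j_0=j$; the reverse inclusion $\Bif_{j}^{min}\cap U\backslash\Psing(F)\subset F_{j}^{-1}(\set{0})$ uses Lemma~\ref{isotropy} once more: such $\lambda$ already satisfies $F(\lambda)=0$ (by part~\ref{bgenall}), so off $\Psing(F)$ the set $X(\lambda)$ contains at most the single positive index $j_0$, the only available nonstationary isotropy group near $(x_0,\lambda)$ is $\Z_{j_0}$, and a minimal period $\tfrac{2\pi}{j}$ forces $j=j_0$. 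Disjointness is immediate, since a common point of two of these sets would be a zero of two factors, hence in $\Psing(F)$; finally $\GlBif_{j}^{min}\subset\Bif_{j}^{min}$ because a branching point is a cluster point of the connected set it bounds. The degenerate case $X^+(\lambda_0)=\emptyset$ is the empty product $F\equiv 1$, for which the necessary condition gives $\Bif(x_0)\cap U=\emptyset$.

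For part~\ref{bgenan} fix a ball $B\subset U$ with $B\cap\Psing(F)=\set{\overline\lambda}$, available because $\overline\lambda$ is isolated in $\Psing(F)$. The membership $\overline\lambda\in\Bif_{j_0}^{min}(x_0)$ I would obtain by a limiting argument: by hypothesis there is a sequence $\lambda_n\to\overline\lambda$ with $\lambda_n\in F_{j_0}^{-1}(\set{0})\cap B\backslash\Psing(F)$, and by part~\ref{bgenfi} each $\lambda_n\in\Bif_{j_0}^{min}(x_0)$; selecting for each $n$ a genuine nonstationary solution of minimal period $\tfrac{2\pi}{j_0}$ within distance $\tfrac1n$ of $(x_0,\lambda_n)$ yields a sequence of such solutions converging to $(x_0,\overline\lambda)$, so $\overline\lambda\in\Bif_{j_0}^{min}(x_0)$. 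The global statement $\overline\lambda\in\GlBif_{j_0}(x_0)$ is the delicate point, because at $\overline\lambda$ one has $\nabla F(\overline\lambda)=0$ (and possibly $\nabla F_{j_0}(\overline\lambda)=0$), so Remark~\ref{signum} and the straight-line criterion of Theorem~\ref{gradzdeg} are unavailable. Here I would invoke analyticity: since $\overline\lambda$ lies in the closure of the regular zero locus of $F_{j_0}$, across which $F_{j_0}$ changes sign, $F_{j_0}$ assumes both signs arbitrarily near $\overline\lambda$, and the curve-selection results of real analytic geometry~\cite{Sf,SfMA} furnish a continuous curve $M\subset\R^k$ through $\overline\lambda$ on which $F_{j_0}$ changes sign at $\overline\lambda$ while $\overline\lambda$ is isolated in $\bigcup_{l\in\N}F_{lj_0}^{-1}(\set{0})\cap M$ (the latter using $B\cap\Psing(F)=\set{\overline\lambda}$ together with the single-factor property from the first paragraph). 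Theorem~\ref{zdeg} applied along $M$ then makes $(x_0,\overline\lambda)$ a global bifurcation point of nontrivial $j_0$-solutions, giving $\overline\lambda\in\Bif_{j_0}^{min}\cap\GlBif_{j_0}(x_0)$. I expect this construction of $M$ --- simultaneously forcing the sign change of $F_{j_0}$ and the isolation of the zero of $\bigcup_l F_{lj_0}$ at a singular point of $F$ --- to be the main obstacle, and the precise reason the analyticity hypothesis of part~\ref{bgenan} is indispensable.
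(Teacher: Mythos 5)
Your proposal is correct and follows essentially the same route as the paper's proof: the product-rule observation that off $\Psing(F)$ exactly one factor $F_{j_0}$ of $F$ can vanish and then $\nabla F_{j_0}\neq 0$, Corollary~\ref{wnzdegpure} applied at each such point for parts~\ref{bgenall} and~\ref{bgenfi} (with the converse inclusions coming from Corollary~\ref{necniel}, Lemma~\ref{isotropy}, and Remark~\ref{inters}), and for part~\ref{bgenan} the cluster-point argument followed by the curve selection lemma for semianalytic sets feeding a curve $M$ into Theorem~\ref{zdeg}. The only cosmetic difference is that the paper asks for $\overline{\lambda}$ to be isolated in $F^{-1}(\set{0})\cap M$ rather than in $\bigcup_{l\in\N}F_{lj_0}^{-1}(\set{0})\cap M$, which is equivalent on $U$ since the functions $F_{lj_0}$ with $lj_0\notin X^+(\lambda_0)$ have no zeros there.
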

\begin{proof} If $X^+(\lambda_0)=\emptyset$
then $\Bif(x_0)\cap U=\emptyset,$ in view of Corollary~\ref{necniel}
and Remark~\ref{inters}. Assume that $X^+(\lambda_0)\neq\emptyset.$
Conclusion~\eqref{bgenall} follows from assertion~\eqref{bgenfi},
Corollary~\ref{necniel}, and Remark~\ref{inters}. To prove
assertion~\eqref{bgenfi} observe that
\begin{equation*}
\nabla F(\lambda)
=\sum_{j\in X^+(\lambda_0)}\nabla F_{j}(\lambda)
\prod_{i\in X^+(\lambda_0)\backslash\set{j}}F_{i}(\lambda).
\end{equation*}
Fix $j\in X^+(\lambda_0)$ and $\lambda\in U\backslash\Psing(F)$ such
that $F_j(\lambda)=0.$ Then $\nabla F_{j}(\lambda)\neq 0$ and
$F_{i}(\lambda)\neq 0$ for all
$i\in X^+(\lambda_0)\backslash\set{j}.$ (In particular, the sets
$F_{j}^{-1}(\set{0})\cap U\backslash\Psing(F),$
$j\in X^+(\lambda_0),$ are pairwise disjoint.)
Thus Corollary~\ref{wnzdegpure} with $\lambda_0$ replaced
by $\lambda$ implies that
$(x_0,\lambda)\in \GlBif_{j}^{min}(x_0)$ and $(x_0,\lambda)$
is not a symmetry breaking point.

Now turn to assertion~\eqref{bgenan}. Notice that
conclusion~\eqref{bgenfi} implies that
$(x_0,\overline{\lambda})$ is a~bifurcation point of solutions with
the minimal period $\frac{2\pi}{j_0}$ as a~cluster point of
such bifurcation points. It remains to show that
$\overline{\lambda}\in\GlBif_{j_0}(x_0).$ (One cannot use
Corollary~\ref{wnzdegpure}, since
$\nabla F(\overline{\lambda})=0$). In view
of the curve selection lemma for semianalytic sets there exists
a~continuous curve $M$ such that $\overline{\lambda}$ is an
isolated element of $F^{-1}(\set{0})\cap M$ and the restriction of
$F_{j_0}$ to $M$ changes its sign at $\overline{\lambda}.$
Consequently, according to Theorem~\ref{zdeg},
$(x_0,\overline{\lambda})$ is a~global bifurcation point of
\mbox{$j_0$-solu}tions.
\end{proof}

Applying Corollary~\ref{wnniezdpure} and Theorem~\ref{niezdeg}
instead of Corollary~\ref{wnzdegpure} and Theorem~\ref{zdeg} one
obtains the following theorem in which bifurcation of nontrivial
stationary solutions is allowed.

\begin{theorem}{\label{strniezdwh}}
Let assumptions (H1)-(H3) be fulfilled and let $U\subset\R^k$ be an
open neighbourhood of $\lambda_0\in\R^k$ such that
$F_m(\lambda)\neq 0$ for every $\lambda\in U$ and
$m\in\N\cup\set{0}\backslash X(\lambda_0).$
If $X(\lambda_0)=\emptyset$ then $\Bif(x_0)\cap U=\emptyset.$
If $X(\lambda_0)\neq\emptyset$ and $F_j,$ $j\in X(\lambda_0),$
are of class $C^1$ in $U$ then
conclusions~\eqref{bgenall}-\eqref{bgenan}
of Theorem~\ref{strzdegwh} hold true for
$\displaystyle F =\prod_{j\in X(\lambda_0)}F_{j}$
and $X^+(\lambda_0)$ replaced by $X(\lambda_0).$
\end{theorem}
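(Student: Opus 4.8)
The plan is to transcribe the proof of Theorem~\ref{strzdegwh} almost verbatim, making three substitutions throughout: the index set $X^+(\lambda_0)$ is replaced by $X(\lambda_0)$ (so the value $j=0$, whose solutions are the stationary ones, is now admitted), the product is taken to be $F=\prod_{j\in X(\lambda_0)}F_{j}$, and Corollary~\ref{wnniezdpure} and Theorem~\ref{niezdeg} are invoked in place of Corollary~\ref{wnzdegpure} and Theorem~\ref{zdeg}. This is legitimate precisely because the two replacement results require only hypotheses (H1)--(H3); they do not call on the nondegeneracy conditions (E1$(x_0,\lambda)$) and (E2$(x_0,\lambda)$), which is why those conditions disappear from the statement. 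First I would dispose of the trivial case: if $X(\lambda_0)=\emptyset$ then, by the choice of $U$, no $F_m$ vanishes on $U$ for any $m\in\N\cup\set{0}$, so Corollary~\ref{necniel} together with Remark~\ref{inters} gives $\Bif(x_0)\cap U=\emptyset.$

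Assuming $X(\lambda_0)\neq\emptyset,$ I would establish conclusion~\eqref{bgenfi} first, since~\eqref{bgenall} then follows from it together with Corollary~\ref{necniel} and Remark~\ref{inters}, exactly as before. Differentiating the product gives
\begin{equation*}
\nabla F(\lambda)=\sum_{j\in X(\lambda_0)}\nabla F_{j}(\lambda)\prod_{i\in X(\lambda_0)\backslash\set{j}}F_{i}(\lambda).
\end{equation*}
Fix $j\in X(\lambda_0)$ and $\lambda\in U\backslash\Psing(F)$ with $F_{j}(\lambda)=0.$ Since $\lambda\notin\Psing(F)$ one has $\nabla F(\lambda)\neq 0$; as every summand except the $j$th carries $F_{j}(\lambda)$ as a factor, the identity above collapses to $\nabla F(\lambda)=\nabla F_{j}(\lambda)\prod_{i\neq j}F_{i}(\lambda),$ forcing $\nabla F_{j}(\lambda)\neq 0$ and $F_{i}(\lambda)\neq 0$ for all $i\in X(\lambda_0)\backslash\set{j}.$ Together with $F_m(\lambda)\neq 0$ for $m\notin X(\lambda_0)$ (from the choice of $U$), this is exactly the hypothesis of Corollary~\ref{wnniezdpure} at the point $\lambda.$ Applying that corollary yields $(x_0,\lambda)\in\GlBif_{j}^{min}(x_0)$ with no symmetry breaking, which simultaneously gives the chain of equalities and the pairwise disjointness in~\eqref{bgenfi}.

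The delicate part, and the only genuine obstacle, is conclusion~\eqref{bgenan} at an isolated point $\overline{\lambda}$ of $\Psing(F)$ lying in $\cl{F_{j_0}^{-1}(\set{0})\backslash\Psing(F)}.$ That $\overline{\lambda}$ is a bifurcation point of the relevant type is immediate from~\eqref{bgenfi}, since it is a cluster point of such points. The global assertion $\overline{\lambda}\in\GlBif_{j_0}(x_0)$ is harder because now $\nabla F(\overline{\lambda})=0,$ so Corollary~\ref{wnniezdpure} no longer applies; I would handle it exactly as in Theorem~\ref{strzdegwh} by invoking the curve selection lemma for semianalytic sets (available because the $F_j$ are assumed analytic) to obtain a continuous curve $M$ through $\overline{\lambda}$ on which $\overline{\lambda}$ is an isolated zero of $F$ and the restriction $F_{j_0}|_M$ changes sign, and then applying Theorem~\ref{niezdeg} to conclude that $(x_0,\overline{\lambda})$ is a global bifurcation point of \mbox{$j_0$-solu}tions. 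The one thing to watch throughout is the bookkeeping at $j_0=0,$ where \mbox{$j_0$-solu}tions are the nontrivial stationary solutions and the minimal-period phrasing is adjusted accordingly; but since both Corollary~\ref{wnniezdpure} and Theorem~\ref{niezdeg} are already stated for all $j\in\N\cup\set{0},$ no separate treatment of this case is required.
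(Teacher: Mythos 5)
Your proposal is correct and coincides with the paper's own argument: the paper obtains Theorem~\ref{strniezdwh} precisely by repeating the proof of Theorem~\ref{strzdegwh} with $X^+(\lambda_0)$ replaced by $X(\lambda_0)$ and with Corollary~\ref{wnniezdpure} and Theorem~\ref{niezdeg} invoked in place of Corollary~\ref{wnzdegpure} and Theorem~\ref{zdeg}, exactly as you do, including the product-rule collapse of $\nabla F$ for conclusion~\eqref{bgenfi} and the curve selection lemma for conclusion~\eqref{bgenan}. Your observation that the replacement results need only (H1)--(H3), which is why (E1) and (E2) drop out, matches the paper's rationale.
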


If the functions $F$ in Theorems~\ref{strzdegwh}
and~\ref{strniezdwh} do not satisfy the assumptions of that
theorems, one can restrict the discussion to the set
of bifurcation points of \mbox{$j$-so}lutions for some fixed
$j,$ which leads to the following two theorems.

\begin{theorem}{\label{strzdeg}}
Let assumptions (H1)-(H3) be fulfilled and let $U\subset\R^k$ be an
open neighbourhood of $\lambda_0\in\R^k$ such that the conditions
(E1$(x_0,\lambda)$), (E2$(x_0,\lambda)$), and
$F_{mj}(\lambda)\neq 0$ are satisfied for some fixed $j\in \N$ and
all $\lambda\in U,$ $m\in \N\backslash X^+_j(\lambda_0).$ If
$X_{j}^+(\lambda_0)=\emptyset$ then $\Bif_j(x_0)\cap U=\emptyset.$
If $X_{j}^+(\lambda_0)\neq\emptyset$ and $F_{lj},$
$l\in X_{j}^+(\lambda_0),$ are of class $C^1$ in $U$ then the
following conclusions hold for
$\displaystyle F=\prod_{l\in X_{j}^+(\lambda_0)}F_{lj}.$
\begin{enumerate}
\item{\label{bselall}} $\displaystyle
      \Bif_j(x_0)\cap U\backslash\Psing(F)
      =\GlBif_j(x_0)\cap U\backslash\Psing(F) \\
      = F^{-1}(\set{0})\cap U\backslash\Psing(F)
      =\bigcup_{l\in X_j^+(\lambda_0)}
      F_{lj}^{-1}(\set{0})\cap U\backslash\Psing(F).$
\item{\label{bselfi}} For every $l\in X_{j}^+(\lambda_0)$
      one has
      \begin{align*}
      \Bif_{lj}^{min}(x_0)\cap U\backslash\Psing(F)
      & =\GlBif_{lj}^{min}(x_0)\cap U\backslash\Psing(F) \\
      & =F_{lj}^{-1}(\set{0})\cap U\backslash\Psing(F).
      \end{align*}
      The sets
      $\GlBif_{lj}^{min}(x_0)\cap U\backslash\Psing(F),$
      $l\in X_j^+(\lambda_0),$ are pairwise disjoint.
\item{\label{bselan}} If $F_{lj},$ $l\in X_j^+(\lambda_0),$
      are analytic in $U$ and $\overline{\lambda}$ is
      an isolated element of $\Psing(F)$ such that
      $\overline{\lambda}\in \cl{F_{l_0j}^{-1}(\set{0})\backslash\Psing(F)}$
      for some fixed $l_0\in X_j^+(\lambda_0)$ then
      $\overline{\lambda}\in\Bif_{l_0j}^{min}(x_0)
      \cap\GlBif_{l_0j}(x_0).$
\end{enumerate}
\end{theorem}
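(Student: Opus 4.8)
The plan is to follow the proof of Theorem~\ref{strzdegwh} line by line, with the index set $X^+(\lambda_0)$ replaced by $X_j^+(\lambda_0)$, the factors $F_i$ replaced by $F_{lj}$, and every assertion restricted to $j$-solutions. If $X_j^+(\lambda_0)=\emptyset$ the hypothesis yields $F_{mj}(\lambda)\neq 0$ for all $m\in\N$, $\lambda\in U$; Corollary~\ref{necniel}(\ref{necjsol}) then precludes bifurcation of nonstationary $j$-solutions while (E1$(x_0,\lambda)$) precludes bifurcation of nontrivial stationary ones, so $\Bif_j(x_0)\cap U=\emptyset$. For the remaining case I will first prove~(\ref{bselfi}) and then read off~(\ref{bselall}) from it exactly as~(\ref{bgenall}) was read off from~(\ref{bgenfi}): away from $\Psing(F)$ a single factor vanishes wherever $F$ does, so Corollary~\ref{necniel}(\ref{necjsol}) and Remark~\ref{inters} give $\Bif_j(x_0)\cap U\backslash\Psing(F)=F^{-1}(\set{0})\cap U\backslash\Psing(F)=\bigcup_{l\in X_j^+(\lambda_0)}F_{lj}^{-1}(\set{0})\cap U\backslash\Psing(F)$, and since every $lj$-solution is a $j$-solution one has $\GlBif_{lj}\subset\GlBif_j$, which together with~(\ref{bselfi}) sandwiches $\GlBif_j(x_0)$ between the same two sets.

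For~(\ref{bselfi}) I differentiate $\displaystyle F=\prod_{l\in X_j^+(\lambda_0)}F_{lj}$, obtaining $\nabla F(\lambda)=\sum_{l}\nabla F_{lj}(\lambda)\prod_{i\neq l}F_{ij}(\lambda)$. At any $\lambda\in U\backslash\Psing(F)$ at most one factor can vanish, for if two did then each summand would retain a vanishing factor and force $\nabla F(\lambda)=0$, i.e. $\lambda\in\Psing(F)$. Hence at $\lambda\in F_{lj}^{-1}(\set{0})\cap U\backslash\Psing(F)$ the factor $F_{lj}$ is the only one vanishing, so $\nabla F_{lj}(\lambda)\neq 0$; combined with the hypothesis $F_{ij}(\lambda)\neq 0$ for $i\notin X_j^+(\lambda_0)$ this gives $F_{ij}(\lambda)\neq 0$ for every $i\neq l$, whence $X_{lj}^+(\lambda)=\set{1}$. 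Theorem~\ref{gradzdeg} applied with $lj$ in place of $j$ therefore makes $(x_0,\lambda)$ a branching point of nonstationary $lj$-solutions and a global bifurcation point of nontrivial $lj$-solutions.

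The step I expect to be the main obstacle is upgrading an $lj$-solution to one of minimal period $\frac{2\pi}{lj}$, because Corollary~\ref{wnzdegpure} is unavailable here: it requires $F_m(\lambda)\neq 0$ for all $m\neq lj$, whereas the hypothesis controls only multiples of $j$. I will bypass it with Lemma~\ref{isotropy}. Any nonstationary solution near $(x_0,\lambda)$ of period $\frac{2\pi}{lj}$ has isotropy $\Z_{slj}$ for some $s\in\N$, and $\Z_{slj}\in G(\lambda)$ means $slj$ is the greatest common divisor of a subset of $X(\lambda)\backslash\set{0}$; every member of that subset is then a multiple of $slj$, hence of $j$, but the only multiple of $j$ lying in $X(\lambda)$ is $lj$, and $lj<slj$ when $s\geq 2$ — a contradiction. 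Thus $s=1$ and the branch consists of solutions of minimal period $\frac{2\pi}{lj}$, giving $F_{lj}^{-1}(\set{0})\cap U\backslash\Psing(F)\subset\GlBif_{lj}^{min}(x_0)$. The opposite inclusion runs the same argument backwards: if $(x_0,\lambda)$ bifurcates solutions of minimal period $\frac{2\pi}{lj}$ then $lj$ is the greatest common divisor of a subset of $X(\lambda)\backslash\set{0}$ consisting of multiples of $j$, and as at most one multiple of $j$ can lie in $X(\lambda)$ off $\Psing(F)$ that multiple must be $lj$ itself, so $F_{lj}(\lambda)=0$. With $\GlBif_{lj}^{min}\subset\Bif_{lj}^{min}$ the three sets in~(\ref{bselfi}) coincide, and their pairwise disjointness is the single-factor statement already used.

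For~(\ref{bselan}) I will mimic Theorem~\ref{strzdegwh}(\ref{bgenan}). By~(\ref{bselfi}) the point $\overline{\lambda}$ is a cluster point of parameters carrying solutions of minimal period $\frac{2\pi}{l_0j}$, so a diagonal selection yields such solutions converging to $(x_0,\overline{\lambda})$ and puts $\overline{\lambda}\in\Bif_{l_0j}^{min}(x_0)$. Global bifurcation cannot come from the gradient theorems since $\nabla F(\overline{\lambda})=0$; instead the curve selection lemma for semianalytic sets (using analyticity of the $F_{lj}$) supplies a continuous curve $M$ on which $\overline{\lambda}$ is isolated in $F^{-1}(\set{0})\cap M$ and along which $F_{l_0j}$ changes sign. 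Because $\bigcup_s F_{sl_0j}^{-1}(\set{0})\cap U\subset F^{-1}(\set{0})$, the point $\overline{\lambda}$ is also isolated in $\bigcup_s F_{sl_0j}^{-1}(\set{0})\cap M$, so Theorem~\ref{zdeg} with $l_0j$ in place of $j$ delivers $\overline{\lambda}\in\GlBif_{l_0j}(x_0)$.
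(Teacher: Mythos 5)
Your proposal is correct and is essentially the paper's own argument: the paper states Theorem~\ref{strzdeg} without a separate proof, deriving it as the $j$-restricted analogue of Theorem~\ref{strzdegwh}, and your line-by-line adaptation (single vanishing factor off $\Psing(F)$, Theorem~\ref{gradzdeg} with $lj$ in place of $j$, the sandwich $\GlBif_{lj}^{min}\subset\GlBif_{lj}\subset\GlBif_j$, and the curve selection argument via Theorem~\ref{zdeg} for part~(\ref{bselan})) is exactly that intended proof. You also correctly identified the one place where the adaptation is not verbatim -- Corollary~\ref{wnzdegpure} is unavailable because the hypothesis only controls $F_{mj}$ on multiples of $j$ -- and your substitute, the $\gcd$ argument from Lemma~\ref{isotropy} and Remark~\ref{inters} showing the only admissible finite isotropy group is $\Z_{lj}$ itself, is precisely the reasoning behind that corollary, carried out correctly in the weaker setting.
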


\begin{theorem}{\label{strniezd}}
Let assumptions (H1)-(H3) be fulfilled and let $U\subset\R^k$ be an
open neighbourhood of $\lambda_0\in\R^k$ such that
$F_{mj}(\lambda)\neq 0$ for some fixed $j\in\N\cup\set{0}$ and all
$\lambda\in U,$ $m\in\N\cup\set{0}\backslash X_j(\lambda_0).$ If
$X_j(\lambda_0)=\emptyset$ then $\Bif_j(x_0)\cap U=\emptyset.$
If $X_j(\lambda_0)\neq\emptyset$ and $F_{lj},$
$l\in X_{j}(\lambda_0),$ are of class $C^1$ in $U$ then
conclusions~\eqref{bselall}-\eqref{bselan} of
Theorem~\ref{strzdeg} hold true for
$\displaystyle F=\prod_{l\in X_{j}(\lambda_0)}F_{lj}$ and
$X_j^+(\lambda_0)$ replaced by $X_j(\lambda_0).$
\end{theorem}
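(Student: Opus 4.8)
The plan is to run the argument of Theorem~\ref{strzdeg} (itself modelled on Theorem~\ref{strzdegwh}) in its combinatorial part, while replacing the nonstationary ingredients by their stationary-permitting analogues, Corollary~\ref{wnniezdpure} and Theorem~\ref{niezdeg} in place of Corollary~\ref{wnzdegpure} and Theorem~\ref{zdeg}, and enlarging the index set from $X_j^+(\lambda_0)$ to $X_j(\lambda_0)$, so that the stationary factor $F_0$ (the value $l=0$) enters the product $F=\prod_{l\in X_j(\lambda_0)}F_{lj}$ whenever $F_0(\lambda_0)=0$. If $X_j(\lambda_0)=\emptyset$, the hypothesis gives $F_{mj}(\lambda)\neq 0$ for every $m\in\N\cup\set{0}$ and every $\lambda\in U$; since a bifurcation point of $j$-solutions must satisfy $\lambda\in\bigcup_{l\in\N\cup\set{0}}F_{lj}^{-1}(\set{0})$ by Corollary~\ref{necniel} and Remark~\ref{inters}, this forces $\Bif_j(x_0)\cap U=\emptyset$.

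Now suppose $X_j(\lambda_0)\neq\emptyset$. First I would record the gradient identity $\nabla F=\sum_{l\in X_j(\lambda_0)}\nabla F_{lj}\prod_{i\in X_j(\lambda_0)\setminus\set{l}}F_{ij}$. Fix $\lambda\in U\setminus\Psing(F)$ with $F_{l_0 j}(\lambda)=0$; then $\nabla F(\lambda)\neq 0$, and every summand but the $l_0$-th carries the factor $F_{l_0 j}(\lambda)$, so $\nabla F_{l_0 j}(\lambda)\neq 0$ and $F_{ij}(\lambda)\neq 0$ for $i\in X_j(\lambda_0)\setminus\set{l_0}$. Together with the standing hypothesis this gives $F_{mj}(\lambda)\neq 0$ for every $m\neq l_0$, whence the sets $F_{lj}^{-1}(\set{0})\cap U\setminus\Psing(F)$ are pairwise disjoint. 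To produce the bifurcation I would split on $l_0$. If $l_0=0$, then $F_0(\lambda)=0$ with $\nabla F_0(\lambda)\neq 0$, so by Remark~\ref{signum} $F_0$ changes sign and has an isolated zero along a transversal line $M$; Theorem~\ref{niezdeg} with $j=0$ then yields global (hence branching) bifurcation of nontrivial stationary solutions, i.e. $\lambda\in\GlBif_0^{min}(x_0)$. If $l_0\geq 1$, then $F_0(\lambda)\neq 0$, so $\nabla_x^2H(x_0,\lambda)$ is nonsingular; hence $(x_0,\lambda)$ carries no stationary bifurcation and $\ind{\nabla_xH(\cdot,\lambda)}{x_0}=\sgn\det\nabla_x^2H(x_0,\lambda)\neq 0$, i.e. (E1$(x_0,\lambda)$) and (E2$(x_0,\lambda)$) hold near $\lambda$. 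Applying Theorem~\ref{gradzdeg} with base period $l_0 j$ (for which $X_{l_0 j}^+(\lambda)=\set{1}$) then makes $(x_0,\lambda)$ a branching point of nonstationary $(l_0 j)$-solutions.

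The step requiring the most care is upgrading ``$(l_0 j)$-solution'' to ``minimal period exactly $\frac{2\pi}{l_0 j}$'', since the hypothesis controls only the multiples $F_{mj}$ and an $F_m$ with $j\nmid m$ might vanish at $\lambda$, so Corollary~\ref{wnniezdpure}, which needs every $F_m$ nonzero, is not directly available. The remedy is that the minimal period of a solution whose isotropy contains $\Z_{l_0 j}$ depends only on the multiples of $l_0 j$: by Lemma~\ref{isotropy} such an isotropy group lies in $G(\lambda)$, hence is some $\K_m$ with $F_m(\lambda)=0$ or an intersection $\Z_{\gcd(m_1,\dots,m_s)}$ of these, and equality $\Z_{\gcd(m_1,\dots,m_s)}=\Z_N$ with $l_0 j\mid N$ forces $N\mid m_i$, so every contributing $m_i$ is a multiple of $l_0 j$. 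At our $\lambda$ the only vanishing multiple of $j$ is $l_0 j$ and $F_0(\lambda)\neq 0$, so $\Z_{l_0 j}$ is the only admissible isotropy group; thus every nonstationary $j$-solution clustering at $(x_0,\lambda)$ has minimal period exactly $\frac{2\pi}{l_0 j}$. The same observation, applied in reverse, shows that $\lambda$ can lie in $\Bif_{lj}^{min}(x_0)$ only for $l=l_0$, which yields the inclusion ``$\subset$'' in \eqref{bselfi}; this is precisely why \eqref{bselfi} asserts disjointness and the minimal-period identity but, unlike Corollary~\ref{wnniezdpure}, not the absence of symmetry breaking.

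Conclusion \eqref{bselall} then follows formally from \eqref{bselfi} together with Corollary~\ref{necniel} and Remark~\ref{inters}, as in Theorem~\ref{strzdegwh}. For \eqref{bselan}, given an isolated $\overline{\lambda}\in\Psing(F)$ with $\overline{\lambda}\in\cl{F_{l_0 j}^{-1}(\set{0})\setminus\Psing(F)}$, membership $\overline{\lambda}\in\Bif_{l_0 j}^{min}(x_0)$ is immediate, $\overline{\lambda}$ being a cluster point of the minimal-period bifurcation points supplied by \eqref{bselfi}; and since $F_{l_0 j}$ is analytic, the curve selection lemma for semianalytic sets produces a continuous curve $M$ on which $\overline{\lambda}$ is isolated in $F^{-1}(\set{0})\cap M$ and along which $F_{l_0 j}$ changes sign, so Theorem~\ref{niezdeg} gives $\overline{\lambda}\in\GlBif_{l_0 j}(x_0)$. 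The degenerate value $j=0$ reduces to the stationary statement already contained in Theorem~\ref{strniezdwh}.
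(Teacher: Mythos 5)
Your proof is correct, and its skeleton is exactly the paper's: Theorem~\ref{strniezd} is stated in the paper without a separate proof, as the restriction to multiples of a fixed $j$ of Theorem~\ref{strniezdwh}, which in turn is obtained from the proof of Theorem~\ref{strzdegwh} (the product-rule expansion of $\nabla F$, pairwise disjointness of the zero sets off $\Psing(F)$, the pure-period corollary at regular points, and the curve selection lemma plus Theorem~\ref{niezdeg} for conclusion~\eqref{bselan}) by substituting Corollary~\ref{wnniezdpure} and Theorem~\ref{niezdeg} for Corollary~\ref{wnzdegpure} and Theorem~\ref{zdeg}. All of these steps appear in your write-up in the same roles, including the empty-case argument via Corollary~\ref{necniel} and Remark~\ref{inters}.

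Where you go beyond the paper's one-line derivation is in making the restriction honest, and this is worth recording: Corollary~\ref{wnniezdpure} requires $F_m(\lambda)\neq 0$ for \emph{every} $m\neq l_0j$, whereas the hypotheses of Theorem~\ref{strniezd} control only the multiples $F_{mj}$, so a detecting function $F_m$ with $j\nmid m$ may well vanish at $\lambda$ and the corollary is not verbatim applicable. Your repair is exactly the right one: at $\lambda\in U\backslash\Psing(F)$ only $F_{l_0j}$ vanishes among the $F_{lj}$; if $l_0=0$ one applies Theorem~\ref{niezdeg} with $j=0$ via Remark~\ref{signum}, and if $l_0\geq 1$ then $F_0(\lambda)\neq 0$ secures (E1$(x_0,\lambda)$), (E2$(x_0,\lambda)$) through the implicit function theorem and $\ind{\nabla_xH(\cdot,\lambda)}{x_0}=\sgn\det\nabla_x^2H(x_0,\lambda)$, so Theorem~\ref{gradzdeg} applies at base period $l_0j$ with $X^+_{l_0j}(\lambda)=\set{1}$. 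The gcd argument — an isotropy group $\Z_N\supset\Z_{l_0j}$ lying in $G(\lambda)$ is, by Lemma~\ref{isotropy}, an intersection $\Z_{\gcd(m_1,\ldots,m_s)}$ with $F_{m_i}(\lambda)=0$ and $N\mid m_i$, forcing each $m_i$ to be a multiple of $j$ and hence $N=l_0j$ — is precisely what is needed to recover the minimal-period identities in~\eqref{bselfi} and their converse inclusions, and your observation that this is why~\eqref{bselfi}, unlike Corollary~\ref{wnniezdpure}, cannot assert the absence of symmetry breaking (an $F_m$ with $j\nmid m$ may vanish nearby) is on target. In short: same route as the paper, with the one genuinely nontrivial point of the restriction, which the paper leaves implicit, correctly identified and closed.
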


\begin{remark}
In view of Lemma~\ref{maxgen},
Theorems~\ref{strzdegwh}-\ref{strniezd} remain valid for a~bounded
open set $U\subset\R^k$ and the sets $X^+(\lambda_0),$
$X(\lambda_0),$ $X_j^+(\lambda_0),$ $X_j(\lambda_0)$ replaced
by the sets
\begin{align*}
X^+(U) &\eqdf \set{j\in\N\cond
\exists_{\lambda\in U}:\;F_j(\lambda)=0},\\
X(U) &\eqdf \set{j\in\N\cup\set{0}\cond
\exists_{\lambda\in U}:\;F_j(\lambda)=0},\\
X_j^+(U) &\eqdf \set{l\in\N\cond
\exists_{\lambda\in U}:\;F_{lj}(\lambda)=0},\\
X_j(U) &\eqdf \set{l\in\N\cup\set{0}\cond
\exists_{\lambda\in U}:\;F_{lj}(\lambda)=0},
\end{align*}
respectively, which makes that theorems independent from
$\lambda_0.$
\end{remark}

Now the results from~\cite{Sf,SfMA}
can be applied. In what follows the symbols
$D^k_r,$ and $S^{k-1}_r$ denote, respectively,
the closed disk and the sphere in $\R^k$ centred at
the origin with radius $r>0.$

\begin{definition}{\label{admissible}}
A~mapping $F\colon\R^k\rightarrow\R$ is called \emph{admissible} if
it is analytic and $0\in\R^k$ is an isolated singular point of
$F^{-1}(\set{0}),$ i.e. it is an isolated element of the set
$F^{-1}(\set{0})\cap(\nabla F)^{-1}(\set{0}).$
\end{definition}

Consider first the case of two parameters ($k=2$).

\begin{definition}{\label{testfuncdef}}
Let $F\colon\R^2\to\R$ be admissible. An analytic mapping
$g\colon\R^2\rightarrow\R$ is called a~\emph{test function for $F$}
if $0\in\R^2$ is an isolated element of the set
$g^{-1}(\set{0})\cap F^{-1}(\set{0}).$
\end{definition}

Set $h(g,F)\eqdf (\Jac(g,F),F)\colon\R^2\rightarrow\R^2$
(see~\cite{Sf}), where $\Jac(g,F) \colon \R^2\rightarrow\R$
is the Jacobian of the mapping $(g,F)\colon\R^2\rightarrow\R^2.$

Applying Theorem~\ref{szafr}, Corollary~\ref{wnszafr}
(see Appendix~\ref{semianal}), and Lemma~\ref{maxgen} one obtains
the following two corollaries to Theorems~\ref{strzdegwh}
and~\ref{strniezdwh}. Determining the numbers
$b_+(g,F)$ i $b_-(g,F)$ in these corollaries allows to localize
the curves forming the set of bifurcation points (see for example
Corollary~\ref{wnIRcw}). Note that assumptions
(E1$(x_0,0)$) and (E2$(x_0,0)$) in Corollary~\ref{strzdegwhcor}
(and in Corollary~\ref{multzdeg}) imply that conditions
(E1$(x_0,\lambda)$) and (E2$(x_0,\lambda)$) are satisfied
for every $\lambda$ from a~neighbourhood of the origin.

\begin{corollary}{\label{strzdegwhcor}}
Let conditions (H1)-(H3), (E1$(x_0,0)$), and (E2$(x_0,0)$) be
satisfied for $k=2,$ and let $X^+(0)\neq\emptyset.$
Set $\displaystyle F=\prod_{j\in X^+(0)}F_{j}$ and assume that $F,$
$F_j,$ $j\in X^+(0),$ are admissible and $g_+$ is a~nonnegative test
function for $F.$ Then for every sufficiently small $r>0$ the
following conclusions hold.
\begin{enumerate}
\item{\label{cortwofir}} Each of the sets
      $\GlBif(x_0)\cap D^2_r\backslash\set{0},$
      $\GlBif_{j}^{min}(x_0)\cap D^2_r\backslash\set{0},$
      $j\in X^+(0),$
      is a~union of even (possibly zero) number of disjoint
      analytic curves, each of which
      meets the origin and crosses $S^1_r$ transversally in one
      point. The number of those curves, equal to $b(F)$ and
      $b(F_{j}),$ respectively, is determined
      by formula~\eqref{wnszafrform} in Corollary~\ref{wnszafr}.
      If the number of the curves is nonzero then
      $0\in \GlBif(x_0).$
\item{\label{cortwosec}} If $g$ is an arbitrary test
      function for $F$ then the
      number of those curves forming
      $\GlBif(x_0)\cap D^2_r\backslash\set{0}$ and
      $\GlBif_{j}^{min}(x_0)\cap D^2_r\backslash\set{0},$
      $j\in X^+(0),$ on which $g$ is positive (negative), equal
      to $b_+(g,F)$ ($b_-(g,F)$)
      and $b_+(g,F_{j})$ ($b_-(g,F_{j})$), respectively, is
      determined by formula~\eqref{szafrform} in
      Theorem~\ref{szafr}.
\item{\label{cortwothi}} If $b(F)\neq b(F_{j})\neq 0$
      for some $j\in X^+(0),$
      then $(x_0,0)$ is a~symmetry breaking point.
\end{enumerate}
\end{corollary}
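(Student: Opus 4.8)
The plan is to combine Theorem~\ref{strzdegwh} with the half-branch counting results quoted in the appendix (Theorem~\ref{szafr} and Corollary~\ref{wnszafr}). First I would take the neighbourhood $U$ of the origin produced by Lemma~\ref{maxgen}, on which $F_m$ vanishes only for $m\in X^+(0),$ and shrink $r>0$ so that $D^2_r\subset U;$ since $F$ is admissible, after shrinking $r$ the origin is the only point of $\Psing(F)$ in $D^2_r,$ so that on $D^2_r$ deleting $\Psing(F)$ is the same as deleting $\set{0}.$ As noted before the statement, (E1$(x_0,0)$) and (E2$(x_0,0)$) force (E1$(x_0,\lambda)$) and (E2$(x_0,\lambda)$) on a whole neighbourhood of $0,$ so the hypotheses of Theorem~\ref{strzdegwh} hold on $U.$ Conclusions~\eqref{bgenall} and~\eqref{bgenfi} of that theorem then give, for every $j\in X^+(0),$
\begin{align*}
\Bif(x_0)\cap D^2_r\backslash\set{0}
&=\GlBif(x_0)\cap D^2_r\backslash\set{0}
=F^{-1}(\set{0})\cap D^2_r\backslash\set{0},\\
\GlBif_j^{min}(x_0)\cap D^2_r\backslash\set{0}
&=F_j^{-1}(\set{0})\cap D^2_r\backslash\set{0}.
\end{align*}

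Next I would read off the geometry of these zero sets from real algebraic geometry. As $F$ and each $F_j$ are admissible, $0$ is an isolated singular point, so the sets $F^{-1}(\set{0})\cap D^2_r\backslash\set{0}$ and $F_j^{-1}(\set{0})\cap D^2_r\backslash\set{0}$ are finite unions of analytic half-branches, each emanating from the origin and, for small $r,$ crossing $S^1_r$ transversally in exactly one point. Corollary~\ref{wnszafr} applied with the nonnegative test function $g_+$ computes their numbers $b(F)$ and $b(F_j)$ via~\eqref{wnszafrform}, and Theorem~\ref{szafr} applied with an arbitrary test function $g$ splits each count into $b_\pm(g,\cdot)$ via~\eqref{szafrform}; together with the identifications above this yields conclusions~\eqref{cortwofir} and~\eqref{cortwosec}. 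The pairwise disjointness of the pieces $\GlBif_j^{min}(x_0)\cap D^2_r\backslash\set{0}$ is exactly the disjointness asserted in conclusion~\eqref{bgenfi} of Theorem~\ref{strzdegwh}. Evenness follows from admissibility: were $F$ not to change sign across some half-branch, every interior point of that branch would be a local extremum of $F$ and hence a zero of $\nabla F,$ producing points of $\Psing(F)$ accumulating at $0$ and contradicting that $0$ is isolated in $\Psing(F);$ thus $F$ alternates sign at the successive crossings of $S^1_r,$ forcing an even number of them. That a nonzero count entails $0\in\GlBif(x_0)$ is precisely conclusion~\eqref{bgenan} of Theorem~\ref{strzdegwh} applied to $\overline{\lambda}=0.$

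For the symmetry breaking conclusion~\eqref{cortwothi} I would exploit additivity of the count. Since $F^{-1}(\set{0})=\bigcup_{i\in X^+(0)}F_i^{-1}(\set{0})$ and, by the disjointness just established, the half-branches of distinct factors meet only at the origin, one has $b(F)=\sum_{i\in X^+(0)}b(F_i).$ Hence $b(F)\neq b(F_j)\neq 0$ forces $b(F_i)\neq 0$ for some $i\neq j,$ so both $F_j^{-1}(\set{0})$ and $F_i^{-1}(\set{0})$ carry half-branches approaching the origin. Along the former the points $(x_0,\lambda)$ are branching points of nonstationary solutions with minimal period $\frac{2\pi}{j}$ and along the latter of solutions with minimal period $\frac{2\pi}{i};$ letting $\lambda\to 0$ along each branch places nontrivial solutions of both minimal periods in every neighbourhood of $(x_0,0),$ so $(x_0,0)$ is a symmetry breaking point.

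The step I expect to be the main obstacle is the bookkeeping of the second paragraph: matching the purely algebraic half-branch counts of Szafraniec to the bifurcation-theoretic sets, i.e. checking that each analytic half-branch of $F_j^{-1}(\set{0})$ really consists of global bifurcation points of \mbox{$j$-so}lutions and that the singular point $0$ is recovered through conclusion~\eqref{bgenan} rather than being lost when $\Psing(F)$ is deleted.
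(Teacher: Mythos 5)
Your proposal is correct and follows essentially the same route as the paper, which obtains this corollary exactly by assembling Lemma~\ref{maxgen}, the observation that (E1$(x_0,0)$) and (E2$(x_0,0)$) propagate to conditions (E1$(x_0,\lambda)$), (E2$(x_0,\lambda)$) on a neighbourhood of the origin, Theorem~\ref{strzdegwh} (with $\Psing(F)\cap D^2_r=\set{0}$ by admissibility, and conclusion~\eqref{bgenan} recovering the origin), and Szafraniec's branch counts from Theorem~\ref{szafr} and Corollary~\ref{wnszafr}. Your separate sign-alternation argument for evenness is harmless but redundant, since $b(F)=2\cdot\ind{h(g_+,F)}{0}$ in~\eqref{wnszafrform} is even on its face, and your additivity $b(F)=\sum_{i\in X^+(0)}b(F_i)$, resting on the pairwise disjointness asserted in conclusion~\eqref{bgenfi}, is precisely the intended justification of conclusion~\eqref{cortwothi}.
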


\begin{corollary}{\label{strniezdwhcor}}
Let conditions(H1)-(H3) be satisfied for $k=2$ and let
$X(0)\neq\emptyset.$ Set $\displaystyle F=\prod_{j\in X(0)}F_{j}$
and assume that $F,$ $F_j,$ $j\in X(0),$ are admissible and $g_+$ is
a~nonnegative test function for $F.$ Then for every sufficiently
small $r>0$ conclusions~\eqref{cortwofir}-\eqref{cortwothi}
of Corollary~\ref{strzdegwhcor} hold true for
$X^+(0)$ replaced by $X(0).$
\end{corollary}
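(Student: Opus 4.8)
The plan is to prove this exactly as Corollary~\ref{strzdegwhcor} is proved, with Theorem~\ref{strniezdwh} taking the place of Theorem~\ref{strzdegwh} and $X(0)$ taking the place of $X^+(0)$ throughout. The essential point is that the difference between the two corollaries---dropping the hypotheses (E1$(x_0,0)$), (E2$(x_0,0)$) and admitting bifurcation of nontrivial stationary solutions, i.e.\ the index $j=0$---is precisely the difference already absorbed into Theorem~\ref{strniezdwh}. Since $X(0)$ is finite and a suitable neighbourhood $U$ of the origin exists by Lemma~\ref{maxgen}, and since each $F_j$, $j\in X(0)$, together with $F=\prod_{j\in X(0)}F_j$, is assumed admissible, the real-algebraic-geometry input is unaffected by whether or not $0\in X(0)$: the function $F_0=\det Q_0(\nabla_x^2H(x_0,\cdot))$ is simply one more analytic factor.

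First I would invoke Theorem~\ref{strniezdwh} to obtain the set-theoretic identities---its versions of conclusions~\eqref{bgenall}--\eqref{bgenan}---identifying $\Bif(x_0)\cap U$ and $\GlBif(x_0)\cap U$ with $F^{-1}(\set{0})\cap U$ away from $\Psing(F)$, and each $\Bif_{j}^{min}(x_0)$, $\GlBif_{j}^{min}(x_0)$ with $F_{j}^{-1}(\set{0})$ away from $\Psing(F)$. These reduce the counting problem for bifurcation points to a counting problem for the zero curves of the analytic, admissible functions $F$ and $F_j$ near the origin, and they also give the pairwise disjointness of the single-index sets.

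Next I would apply the real-algebraic-geometry machinery (Theorem~\ref{szafr} and Corollary~\ref{wnszafr}) to the admissible functions $F$ and $F_j$. Since $0$ is an isolated singular point, $F^{-1}(\set{0})$ (respectively $F_j^{-1}(\set{0})$) decomposes near the origin into an even number of disjoint analytic curves, each meeting $0$ and crossing $S^1_r$ transversally, the number being $b(F)$ (resp.\ $b(F_j)$) by formula~\eqref{wnszafrform}; passing to the nonnegative test function $g_+$ and then to an arbitrary test function $g$ refines this count into $b_+(g,F)$ and $b_-(g,F)$ via formula~\eqref{szafrform}, exactly as in conclusions~\eqref{cortwofir} and~\eqref{cortwosec}. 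For the symmetry-breaking conclusion~\eqref{cortwothi}, I would argue that $b(F)\neq b(F_{j})\neq 0$ forces strictly more branches in $\Bif(x_0)$ than in the single-index set $\Bif_{j}^{min}(x_0)$, so arbitrarily close to $(x_0,0)$ there occur nontrivial solutions lying on at least two branches carrying distinct isotropy groups, which is symmetry breaking in the sense of Definition~\ref{symbrdef}.

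I do not expect a genuine obstacle here, since this is essentially a \emph{mutatis mutandis} corollary: the only point requiring care is that admitting the index $j=0$ disturbs neither the set-theoretic identities (handled uniformly by Theorem~\ref{strniezdwh}, which treats stationary and $j$-solutions together) nor the branch count (which depends only on the admissibility of $F$ and of its factors, not on the values of the indices, and for which stationary isotropy $\SO(2)$ is as good as any $\Z_j$ in Definition~\ref{symbrdef}). Consequently the verbatim argument of Corollary~\ref{strzdegwhcor}, with $X(0)$ in place of $X^+(0)$, yields all three conclusions.
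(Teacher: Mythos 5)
Your proposal is correct and follows essentially the same route as the paper, which derives Corollary~\ref{strniezdwhcor} exactly as you do: the set-theoretic identities of Theorem~\ref{strniezdwh} (in place of Theorem~\ref{strzdegwh}) are combined with Theorem~\ref{szafr} and Corollary~\ref{wnszafr}, with Lemma~\ref{maxgen} supplying the neighbourhood $U$, mirroring the proof of Corollary~\ref{strzdegwhcor}. Your observation that the factor $F_0$ is just one more admissible analytic factor, and that stationary isotropy $\SO(2)$ serves as well as any $\Z_j$ in Definition~\ref{symbrdef}, is precisely the point the paper has already absorbed into Theorem~\ref{strniezdwh}.
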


\begin{remark}{\label{remtwo}}
One obtains two analogous corollaries to Theorems~\ref{strzdeg}
and~\ref{strniezd} in the case of $k=2.$
\end{remark}

Now consider the case of arbitrary number $k$ of parameters.

Assume that an admissible function $F\colon\R^k\to\R$ is a~Morse
function on small spheres, i.e. there exists
$r>0$ such that $F|_{S^{k-1}_s}$ is a~Morse function for
every $0<s\leq r.$ Let $\Sigma_r$ be the set of
critical points of $F|_{S^{k-1}_r}.$ For $\lambda\in\Sigma_r$
denote by $\mathrm{ind}(F,\lambda)$ the Morse index of
$F|_{S^{k-1}_r}$ at $\lambda.$ Set
\begin{align*}
\nonumber n_+(F)&\eqdf\#\set{\lambda\in\Sigma_r\cond F(\lambda)<0
\;\wedge\;\mathrm{ind}(F,\lambda)\; \text{is even}}, \\
\nonumber n_-(F)&\eqdf\#\set{\lambda\in\Sigma_r\cond F(\lambda)<0
\;\wedge\;\mathrm{ind}(F,\lambda)\; \text{is odd}}, \\
\nonumber p_+(F)&\eqdf\#\set{\lambda\in\Sigma_r\cond F(\lambda)>0
\;\wedge\;\mathrm{ind}(F,\lambda)\; \text{is even}}, \\
\nonumber p_-(F)&\eqdf\#\set{\lambda\in\Sigma_r\cond F(\lambda)>0
\;\wedge\;\mathrm{ind}(F,\lambda)\; \text{is odd}}. \\
\end{align*}
Szafraniec~\cite{SfMA} proved theorems which can be used to verify
whether $F$ is Morse on small spheres and gave formulae for
$n_{\pm}(F),$ $p_{\pm}(F)$ written in terms of local topological
degree of mappings defined explicitly by using $F.$

Notice that if $n_{\mu}(F)\cdot p_{\nu}(F)\neq 0$ for some
$\mu,\nu\in\set{+,-}$ then $F$ has zeros on $S^{k-1}_r$ for every
sufficiently small $r>0.$ In particular,
$F^{-1}(\set{0})\cap D^k_r\neq \set{0}.$
Thus one obtains the following two
corollaries to Theorems~\ref{strzdegwh} and~\ref{strniezdwh}
(see also Lemma~\ref{maxgen}).

\begin{corollary}{\label{multzdeg}}
Let conditions (H1)-(H3), (E1$(x_0,0)$), and (E2$(x_0,0)$) be
satisfied, and let $X^+(0)\neq\emptyset.$
Set $\displaystyle F=\prod_{j\in X^+(0)}F_{j}$ and assume that $F,$
$F_j,$ $j\in X^+(0),$ are admissible and Morse on small spheres.
Then for every sufficiently small $r>0$ the following conclusions
hold.
\begin{enumerate}
\item{\label{corarbfir}} If $n_{\mu}(F)\cdot p_{\nu}(F)\neq 0$
      for some $\mu,\nu\in\set{+,-}$
      then the set $\GlBif(x_0)\cap D^k_r$
      is a~topological cone with vertex at the origin and base
      $F^{-1}(\set{0})\cap S^k_r.$
      Moreover, $\GlBif(x_0)\cap D^k_r\backslash\set{0}$ is
      a~\mbox{$(k-1)$-di}mensional manifold with boundary
      $F^{-1}(\set{0})\cap S^k_r.$

\item{\label{corarbsec}} Similarly, if
      $n_{\mu}(F_j)\cdot p_{\nu}(F_j)\neq 0$
      for some $j\in X^+(0)$ and $\mu,\nu\in\set{+,-}$
      then the set $\GlBif_j^{min}(x_0)\cap D^k_r\cup\set{0}$
      is a~topological cone with vertex at the origin and base
      $F_j^{-1}(\set{0})\cap S^k_r.$
      Moreover, $\GlBif_j^{min}(x_0)\cap D^k_r\backslash\set{0}$
      is a~\mbox{$(k-1)$-di}mensional manifold with boundary
      $F_j^{-1}(\set{0})\cap S^k_r.$

\item{\label{corarbthir}} If
      $n_{\mu_1}(F_{j_1})\cdot p_{\nu_1}(F_{j_1})
      \cdot n_{\mu_2}(F_{j_2})\cdot p_{\nu_2}(F_{j_2})\neq 0$
      for some $j_1,j_2\in X^+(0)$ and some
      $\mu_1,\nu_1,\mu_2,\nu_2\in\set{+,-}$
      then $(x_0,0)$ is a~symmetry breaking point.
\end{enumerate}
\end{corollary}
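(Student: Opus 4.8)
The plan is to reduce all three assertions to the identification of global bifurcation sets with zero sets of detecting functions already established in Theorem~\ref{strzdegwh}, supplying the remaining geometric input from the local conic structure of a real-analytic set with an isolated singularity. Fix $\lambda_0=0$; by Lemma~\ref{maxgen} I may choose a bounded neighbourhood $U$ of the origin on which no $F_m$ with $m\in\N\backslash X^+(0)$ vanishes. I then restrict to radii $r$ small enough that $D^k_r\subset U$, that $\Psing(F)\cap D^k_r=\set{0}$ and $\Psing(F_j)\cap D^k_r=\set{0}$ for each $j\in X^+(0)$ (possible since $F$ and the $F_j$ are admissible), and that the local conic structure theorem applies to $F^{-1}(\set{0})$ and to each $F_j^{-1}(\set{0})$.

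For conclusion~\eqref{corarbfir}, as noted just before the statement $n_\mu(F)\cdot p_\nu(F)\neq 0$ gives $F^{-1}(\set{0})\cap S^{k-1}_r\neq\emptyset$, so the prospective base is nonempty. Since $F$ is admissible, $\Psing(F)\cap(D^k_r\backslash\set{0})=\emptyset$, hence $\nabla F\neq 0$ on $F^{-1}(\set{0})\cap D^k_r\backslash\set{0}$, and Theorem~\ref{strzdegwh}\eqref{bgenall} identifies $\GlBif(x_0)\cap D^k_r\backslash\set{0}$ with $F^{-1}(\set{0})\cap D^k_r\backslash\set{0}$. The local conic structure theorem furnishes, for small $r$, transversality of $S^{k-1}_r$ with $F^{-1}(\set{0})$ and a homeomorphism of the pair $(D^k_r,F^{-1}(\set{0})\cap D^k_r)$ onto the cone over $(S^{k-1}_r,F^{-1}(\set{0})\cap S^{k-1}_r)$; transversality together with $\nabla F\neq 0$ makes $F^{-1}(\set{0})\cap D^k_r\backslash\set{0}$ a~\mbox{$(k-1)$-dimensional} manifold with boundary $F^{-1}(\set{0})\cap S^{k-1}_r$. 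Finally I place the origin into $\GlBif(x_0)$: zeros of $F$ occur on every small sphere and $X^+(0)$ is finite, so some $F_{j_0}$, $j_0\in X^+(0)$, has regular zeros accumulating at $0$, whence $0\in\cl{F_{j_0}^{-1}(\set{0})\backslash\Psing(F)}$ and Theorem~\ref{strzdegwh}\eqref{bgenan} yields $0\in\GlBif_{j_0}(x_0)\subset\GlBif(x_0)$. Thus $\GlBif(x_0)\cap D^k_r=F^{-1}(\set{0})\cap D^k_r$ is the claimed cone.

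Conclusion~\eqref{corarbsec} follows by the same argument with $F_j$ in place of $F$: admissibility of $F_j$ gives $\nabla F_j\neq 0$ off the origin and the conic structure of $F_j^{-1}(\set{0})$, while Theorem~\ref{strzdegwh}\eqref{bgenfi} identifies $\GlBif_{j}^{min}(x_0)\cap D^k_r\backslash\set{0}$ with $F_j^{-1}(\set{0})\cap D^k_r\backslash\set{0}$ (here I use $\Psing(F)\cap D^k_r=\set{0}$ to drop $\Psing(F)$). Adjoining $\set{0}$ as in the statement, and noting $0\in F_j^{-1}(\set{0})$, yields the cone and the manifold-with-boundary description. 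For conclusion~\eqref{corarbthir} I take distinct $j_1,j_2\in X^+(0)$; the sign hypothesis makes both $F_{j_1}^{-1}(\set{0})$ and $F_{j_2}^{-1}(\set{0})$ meet every small sphere, so by~\eqref{corarbsec} each neighbourhood of $(x_0,0)$ contains branching points of nonstationary solutions of minimal period $\frac{2\pi}{j_1}$ and of minimal period $\frac{2\pi}{j_2}$, hence nontrivial solutions with the distinct isotropy groups $\Z_{j_1}$ and $\Z_{j_2}$; by Definition~\ref{symbrdef}, $(x_0,0)$ is a~symmetry breaking point.

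The bookkeeping with $\Psing$ and the reductions to Theorem~\ref{strzdegwh} are routine. The main obstacle is the geometric step: justifying, uniformly for all sufficiently small $r$, both the cone homeomorphism and the transversality of the small spheres with $F^{-1}(\set{0})$ (equivalently, that $F^{-1}(\set{0})\cap S^{k-1}_r$ is a~smooth \mbox{$(k-2)$-dimensional} manifold). This is precisely the local conic structure of a~real-analytic set with an isolated singularity, which I would invoke from the real-algebraic-geometry apparatus of~\cite{Sf,SfMA} (the curve selection lemma used in the proof of Theorem~\ref{strzdegwh} belongs to the same circle of ideas); the Morse-on-small-spheres hypothesis enters only to guarantee that $n_\pm(F)$ and $p_\pm(F)$ are well defined and computable, so that the hypotheses $n_\mu\cdot p_\nu\neq 0$ can be checked in practice.
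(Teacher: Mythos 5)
Your proposal is correct and takes essentially the same route the paper intends: the corollary is stated there without a separate proof, as a direct consequence of Theorem~\ref{strzdegwh} (parts~\eqref{bgenall}--\eqref{bgenan}), the observation that $n_{\mu}(F)\cdot p_{\nu}(F)\neq 0$ forces zeros of $F$ on every sufficiently small sphere, and the local conic structure of an analytic zero set with an isolated singularity from the apparatus of~\cite{Sf,SfMA} --- precisely the three ingredients you assemble, including the use of part~\eqref{bgenan} to place the origin itself in $\GlBif(x_0)$ and the reduction of the symmetry-breaking claim to Definition~\ref{symbrdef}. The one step you leave tacit --- that (E1$(x_0,0)$) and (E2$(x_0,0)$) yield (E1$(x_0,\lambda)$) and (E2$(x_0,\lambda)$) for all $\lambda$ in a neighbourhood of the origin, as the hypotheses of Theorem~\ref{strzdegwh} require --- is exactly the point recorded in the paper's remark preceding Corollary~\ref{strzdegwhcor}, and follows from the stability of the isolating neighbourhood in (E1) together with homotopy invariance of the topological index.
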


\begin{corollary}{\label{multstat}}
Let conditions(H1)-(H3) be satisfied and let $X(0)\neq\emptyset.$
Set $\displaystyle F=\prod_{j\in X(0)}F_{j}$ and assume that $F,$
$F_j,$ $j\in X(0),$ are admissible and Morse on small spheres.
Then for every sufficiently small $r>0$
conclusions~\eqref{corarbfir}-\eqref{corarbthir}
of Corollary~\ref{multzdeg} hold true for $X^+(0)$
replaced by $X(0).$
\end{corollary}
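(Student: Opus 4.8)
The plan is to follow the proof of Corollary~\ref{multzdeg} line by line, substituting Theorem~\ref{strniezdwh} for Theorem~\ref{strzdegwh} and the index set $X(0)$ for $X^+(0)$ throughout. The reason this works is purely structural: Theorem~\ref{strniezdwh} stands to Theorem~\ref{strzdegwh} in exactly the same relation as Corollary~\ref{multstat} stands to Corollary~\ref{multzdeg}. At the level of detecting functions, dropping (E1$(x_0,0)$) and (E2$(x_0,0)$) and admitting bifurcation of nontrivial stationary solutions amounts to nothing more than enlarging the product to $F=\prod_{j\in X(0)}F_j$ (which may now include the factor $F_0$) and allowing the value $j=0$. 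No analytic ingredient beyond Theorem~\ref{strniezdwh} and the cited results of Szafraniec~\cite{SfMA} is needed; the task is to reassemble the topology.

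First I would invoke Lemma~\ref{maxgen} to fix a bounded open neighbourhood $U$ of the origin on which $F_m(\lambda)\neq 0$ for every $m\in\N\cup\set{0}\backslash X(0)$, so that the finite product $F=\prod_{j\in X(0)}F_j$ detects all bifurcation in $U$, and I would shrink $r$ so that $D^k_r\subset U$. Each $F_j$ with $j\in X(0)$, being analytic, is in particular of class $C^1$, so Theorem~\ref{strniezdwh} applies and delivers, on $U\backslash\Psing(F)$, the identities $\Bif(x_0)=\GlBif(x_0)=F^{-1}(\set{0})=\bigcup_{j\in X(0)}F_j^{-1}(\set{0})$, together with $\Bif_j^{min}(x_0)=\GlBif_j^{min}(x_0)=F_j^{-1}(\set{0})$ for each $j\in X(0)$, the latter sets being pairwise disjoint.

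Next I would read off the manifold-and-cone structure exactly as in the arbitrary-$k$ case of Corollary~\ref{multzdeg}. Admissibility of $F$ (respectively of each $F_j$) makes the origin an isolated singular point, so for small $r$ one has $\Psing(F)\cap D^k_r=\set{0}$; hence $\nabla F\neq 0$ on $F^{-1}(\set{0})\cap(D^k_r\backslash\set{0})$, which is therefore a $(k-1)$-dimensional manifold coinciding, by the previous step, with $\GlBif(x_0)\cap D^k_r\backslash\set{0}$. The hypothesis $n_\mu(F)\,p_\nu(F)\neq 0$ guarantees, as recorded just before Corollary~\ref{multzdeg}, that $F$ has zeros on every small sphere $S^{k-1}_r$, so the base $F^{-1}(\set{0})\cap S^{k-1}_r$ is nonempty; combined with the local conic structure of real analytic sets this yields the topological cone with vertex $0$ and base $F^{-1}(\set{0})\cap S^{k-1}_r$, giving conclusion~\eqref{corarbfir} with $X(0)$ in place of $X^+(0)$. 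Running the same argument for a single factor under $n_\mu(F_j)\,p_\nu(F_j)\neq 0$ (adjoining the vertex $\set{0}$ as in the statement) gives conclusion~\eqref{corarbsec}. For conclusion~\eqref{corarbthir}, if the products are nonzero for two indices $j_1,j_2\in X(0)$ then both $\GlBif_{j_1}^{min}(x_0)$ and $\GlBif_{j_2}^{min}(x_0)$ meet $D^k_r\backslash\set{0}$ arbitrarily near the origin, so every neighbourhood of $(x_0,0)$ contains nontrivial solutions with distinct isotropy groups (minimal periods $\frac{2\pi}{j_1}$ and $\frac{2\pi}{j_2}$, or stationary if some $j_i=0$); by Definition~\ref{symbrdef} this makes $(x_0,0)$ a symmetry breaking point.

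The only genuinely delicate point is the same one as in Corollary~\ref{multzdeg}: verifying that the Morse-on-small-spheres hypothesis, through Szafraniec's formulae for $n_\pm$ and $p_\pm$, really forces a nonempty base on $S^{k-1}_r$, so that the conic structure is the full $(k-1)$-dimensional cone rather than the degenerate $\set{0}$. Once that non-vacuity is secured, the manifold and cone assertions follow immediately from admissibility and the conic structure. A minor but necessary piece of bookkeeping, absent from Corollary~\ref{multzdeg}, is that $0$ may now lie in $X(0)$; throughout, the index $j=0$ must be read as the stationary case and $\GlBif_0^{min}(x_0)$ as the set of bifurcation parameters of nontrivial stationary solutions, which is precisely the reading supplied by Theorem~\ref{strniezdwh}.
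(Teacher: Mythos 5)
Your proposal is correct and matches the paper's own (essentially unwritten) derivation: the paper obtains Corollary~\ref{multstat} exactly by applying Theorem~\ref{strniezdwh} in place of Theorem~\ref{strzdegwh}, together with Lemma~\ref{maxgen} and the remark preceding Corollary~\ref{multzdeg} that $n_{\mu}(F)\cdot p_{\nu}(F)\neq 0$ forces zeros of $F$ on every small sphere, which is precisely your argument, including the careful reading of $j=0$ as the stationary case. Your handling of the vertex and of symmetry breaking via distinct isotropy groups (with $\SO(2)$ for the stationary branch) is consistent with Definition~\ref{symbrdef} and with conclusion~\eqref{bgenan} of the structure theorems, so nothing is missing.
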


One obtains two analogous corollaries to Theorems~\ref{strzdeg}
and~\ref{strniezd}.

The above corollaries allow to use the formulae for $n_{\pm}(F),$
$p_{\pm}(F)$ given in~\cite{SfMA} to detect symmetry breaking
points. The results from~\cite{SfMA} can be also used to
investigate the number of the cones from the above corollaries.

\section{Examples}{\label{examples}}

In this section the results from Section~\ref{structsection} are
applied to examples of system~\eqref{parhamniel} with two and three
parameters. Symbolic computations of topological indices have been
performed by using {\L}\c{e}cki's program based on an algorithm
described in~\cite{EL,LS}. Other symbolic computations (solving
polynomial equations, estimates, etc.) have been carried out
by using Maple. The graphs of curves and surfaces forming the sets
of zeros of detecting functions (which are proved to be bifurcation
points of given systems) in prescribed area have been obtained by
using Endrass' program \emph{surf}~\cite{surf}.

Recall that $D^k_r$ denotes the closed disc in $\R^k$ centred at
the origin with radius $r>0.$

\begin{remark}{\label{singtang}}
Let $F\colon\R^k\to\R$ be an analytic function for some $k\in\N.$
Fix $r>0$ and let $0\in\R^k$ be the unique singular point
of $F$ in $D^k_r.$ Assume that for every
$\lambda\in F^{-1}(\set{0})\cap D^k_r\backslash\set{0}$ the tangent
space to $F^{-1}(\set{0})$ at $\lambda$ is not equal to the tangent
space at $\lambda$ to the sphere centred at the origin.
Then every connected component of $F^{-1}(\set{0})\cap D^k_r$
contains the origin.
\end{remark}

\begin{remark}
Let $U\subset\R^k$ be a~bounded open neighbourhood of
$\lambda_0\in\R^k$ and let $F\colon\R^k\to\R$ be a~continuous
function. In the aim of proving that $\lambda_0$ is the only zero
of $F$ in $\cl{U}$ it suffices to prove that $\lambda_0$ is the
only solution in $\R^k$ of the equation
\begin{equation*}
F(\lambda)^2=h(\lambda),
\end{equation*}
where $h\colon\R^k\to\R$ is a~continuous function such that
$h(\lambda_0)=0,$ $h(\lambda)=0$ for every $\lambda\in\partial U,$
$h(\lambda)>0$ for every
$\lambda\in\Inter{U}\backslash\set{\lambda_0},$
and $h(\lambda)<0$ for every $\lambda\in\R^k\backslash\cl{U}.$
If $\cl{U}$ is a~closed disc centred at $\lambda_0$ with radius
$r>0$ then one can exploit the function $h$ of the form
\begin{equation*}
h(\lambda)=
-\abs{\lambda-\lambda_0}^{2p}(\abs{\lambda-\lambda_0}^{2q}-r^{2q}),
\end{equation*}
where $p,q\in\N.$
\end{remark}

\begin{remark}{\label{probnerem}}
In the next two examples, the functions
$g_i\colon\R^2\rightarrow\R,$ $i=1,\ldots,4,$ are defined by
\begin{equation*}
g_1(\lambda_1,\lambda_2)=\lambda_1^2+\lambda_2^2,\;\;\;
g_2(\lambda_1,\lambda_2)=\lambda_1,\;\;\;
g_3(\lambda_1,\lambda_2)=\lambda_2,\;\;\;
g_4(\lambda_1,\lambda_2)=\lambda_1\cdot\lambda_2
\end{equation*}
(see~\cite{IR}). If $F\colon\R^2\rightarrow\R$ is
an admissible mapping which has no zeros on the coordinate
axes in a neighbourhood of the origin
(e.g. if $F(\cdot,0)$ and $F(0,\cdot)$ are polynomials
of nonzero degree) then $g_i,$ $i=1,\ldots,4,$ are test
functions for $F,$ since the roots of $g_i$ lie on
the coordinate axes. For such an $F$
the symbol $b_i(F)$ denotes the number of components of
$F^{-1}(\set{0})\cap D^2_r\backslash\set{0}$
(for sufficiently small $r>0$) contained in the
\mbox{$i$th} quarter of the plane $\R^2$ for $i=1,\ldots,4.$
\end{remark}

In all examples use is made of the functions $F_j$ defined
by~\eqref{Fj}.

\begin{example}{\label{exdeg}}
Let $H\colon\R^6\times\R^2\rightarrow\R$ be the Hamilton function
given by
\begin{equation*}
H(x,\lambda)\equiv
H(x_1,\ldots,x_6,\lambda_1,\lambda_2)
=P(x_1,\ldots,x_6,\lambda_1,\lambda_2)
+Q(x_1,\ldots,x_6),
\end{equation*}
where
\begin{align*}
P(x_1,\ldots,x_6,\lambda_1,\lambda_2)
&= \frac{1}{2}(9+\frac{1}{10}\lambda_1^6)x_1^2
+\frac{5}{2}x_3^2+\frac{1}{2}x_4^2
+\frac{1}{2}(5-\frac{1}{20}\lambda_1^5)x_6^2 \\
&\quad +2\lambda_2^6x_1x_3
+8\lambda_2^6x_4x_6+x_2^4+x_5^4,
\end{align*}
$Q\in C^2(\R^6,\R)$ has a~local minimum at the origin,
and $\nabla^2Q(0)=0,$ for example
\begin{align}{\label{exdegadd}}
\nonumber Q(x_1,\ldots,x_6) &
=x_1^6+x_2^6+x_3^6+x_4^6+x_5^6+x_6^6+(x_1^9+x_3^7)x_2 \\
& \quad +x_1x_2^7+x_2^7x_3+x_2x_6^9+x_4^7x_5+(x_2^9+x_5^7)x_6.
\end{align}

$H$ satisfies conditions (H1)-(H3) for $k=2$ and
$x_0=0\in\R^6.$

First the set of bifurcation points in  $\set{0}\times D^2_r$ will
be described for sufficiently small $r>0$ and then it will be shown
that the conclusions hold for every $r\leq 0.3.$

One has
\begin{equation*}
A(\lambda_1,\lambda_2) = \left[
\begin{array}{ccc}
9+\frac{1}{10}\lambda_1^6 & 0 & 2\lambda_2^6 \\
0 & 0 & \\
2\lambda_2^6 & 0 & 5
\end{array}
\right],
\end{equation*}
\begin{equation*}
B(\lambda_1,\lambda_2) = \left[
\begin{array}{ccc}
1 & 0 & 8\lambda_2^6 \\
0 & 0 & 0 \\
8\lambda_2^6 & 0 & 5-\frac{1}{20}\lambda_1^5
\end{array}
\right].
\end{equation*}
Those of the functions $F_j,$ $j\in\N\cup\set{0},$ defined
by~\eqref{Fj}, which vanish at $(0,0)\in\R^2$ are $F_0,$ $F_3,$
and $F_5,$ hence $X^+(0)=\set{3,5}$ (see also Lemmas~\ref{maxgen},
\ref{max}). One has
\begin{align*}
F_0(\lambda_1,\lambda_2) & \equiv  0, \\
F_3(\lambda_1,\lambda_2) &
=288\lambda_1^6\lambda_2^{12}
-\frac{72}{5}\lambda_1^6+\frac{9}{40}\lambda_1^{11}
-2304\lambda_2^{24}+
28692\lambda_2^{12}-\frac{9}{5}\lambda_1^5\lambda_2^{12},\\
F_5(\lambda_1,\lambda_2) &
=800\lambda_1^6\lambda_2^{12}+\frac{5}{8}\lambda_1^{11}
+92500\lambda_2^{12}
-100\lambda_1^5-6400\lambda_2^{24}-5\lambda_1^5\lambda_2^{12}.
\end{align*}

Use will be made of Theorems~\ref{strzdegwh}, \ref{strzdeg},
and Corollary~\ref{strzdegwhcor} (see also
Remark~\ref{remtwo}). Theorems~\ref{strniezdwh}, \ref{strniezd},
and Corollary~\ref{strniezdwhcor} cannot be applied,
since $F_0=0,$ which means that all the points
$(x_0,\lambda),$ $\lambda\in\R^2,$ are degenerate.

Observe that conditions (E1$(0,\lambda)$) and (E2$(0,\lambda)$) are
satisfied for every $\lambda\in U \eqdf (-0.31,0.31)^2$.
(At the moment only assumptions (E1$(0,0)$) and (E2$(0,0)$) are
needed, as in Corollary~\ref{strzdegwhcor}.)
Indeed. An appropriate estimate for the function $P$ shows that for
every $\lambda\in (-0.31,0.31)^2$ and every
$v\in\R^6\backslash\set{0}$ the function
$[0,+\infty)\ni c\mapsto P(cv,\lambda)$ is strictly increasing
(in particular, $P(\cdot,\lambda)$ has a~strict local minimum at
$0\in\R^6$). On the other hand, $Q$ has a~minimum at $0\in\R^6$ and
it does not depend on $\lambda.$ Thus there exists $\varepsilon>0$
such that $\nabla_xH(x,\lambda)\neq 0$ for every
$0<\abs{x}<\varepsilon,$ $\lambda\in (-0.31,0.31)^2.$
Consequently, for every $\lambda\in (-0.31,0.31)^2$ condition
(E1$(0,\lambda)$) is fulfilled and the function
$H(\cdot,\lambda)$ has a~strict local minimum at $0\in\R^6,$ hence
$\ind{\nabla_xH(\cdot,\lambda)}{0}=1\neq 0$ (see~\cite{A}).

Symbolic computations show that $(0,0)\in\R^2$ is an isolated
singular point of the functions $F_3,$ $F_5,$ and $F=F_3\cdot F_5.$
Thus they are admissible and, according to Remark~\ref{probnerem},
$g_i,$ $i=1,\ldots,4,$ are test
functions for them. Furthermore,
\begin{equation}{\label{indexfirgen}}
\ind{h(g_1,F_3)}{0}=2,\;\;
\ind{h(g_1,F_5)}{0}=1,\;\;\ind{h(g_1,F)}{0}=3,
\end{equation}
which has been checked by using {\L}\c{e}cki's program. It
follows from Theorems~\ref{strzdegwh},~\ref{strzdeg}
and Corollary~\ref{strzdegwhcor} (for $g_+=g_1$) that for
every sufficiently small $r>0$ the following equalities hold.
\begin{equation}{\label{bifzerfir}}
\begin{aligned}
\Bif(0)\cap D^2_r=\GlBif(0)\cap D^2_r
& =F^{-1}(\set{0})\cap D^2_r \\
& =\left(\GlBif_3^{min}(0)\cup
\GlBif_5^{min}(0)\right)\cap D^2_r, \\
\GlBif_3^{min}\cap D^2_r & =F_3^{-1}(\set{0})\cap D^2_r, \\
\GlBif_5^{min}\cap D^2_r & =F_5^{-1}(\set{0})\cap D^2_r.
\end{aligned}
\end{equation}
The fact that $0\in \GlBif_3^{min}\cap D^2_r$ and
$0\in \GlBif_5^{min}\cap D^2_r$ follows from Lemma~\ref{isotropy}
and Remark~\ref{inters}. (The only minimal periods of nontrivial
solutions in a~neighbourhood of the origin are $\frac{2\pi}{3}$ and
$\frac{2\pi}{5}.$)

In view of Corollary~\ref{strzdegwhcor}, the set
$\GlBif(0)\cap D^2_r\backslash\set{0}$ consists
of $b(F)=6$ curves. The numbers of the curves forming the sets
$\GlBif_3^{min}(0)\cap D^2_r\backslash\set{0}$ and
$\GlBif_5^{min}(0)\cap D^2_r\backslash\set{0}$ are equal to
$b(F_3)=4$ and $b(F_5)=5,$ respectively.

To localize the curves in the quarters of the plane
Corollary~\ref{wnIRcw} will be used. Application of {\L}\c{e}cki's
program yields
\begin{equation}{\label{indexfirspec}}
\begin{aligned}
\ind{h(g_2,F_3)}{0} & =  0,\;\;\ind{h(g_2,F_5)}{0}=1, \\
\ind{h(g_3,F_3)}{0} & =  0,\;\;\ind{h(g_3,F_5)}{0}=0, \\
\ind{h(g_4,F_3)}{0} & =  0,\;\;\ind{h(g_4,F_5)}{0}=0.
\end{aligned}
\end{equation}
Taking into account~\eqref{indexfirgen}, \eqref{indexfirspec},
and Corollary~\ref{wnIRcw} one obtains
\begin{align*}
b_1(F_3) & =  1,\;\; b_2(F_3)=1,\;\; b_3(F_3)=1,\;\;b_4(F_3)=1, \\
b_1(F_5) & =  1,\;\; b_2(F_5)=0,\;\; b_3(F_5)=0,\;\;b_4(F_5)=1.
\end{align*}

\begin{figure}
\includegraphics{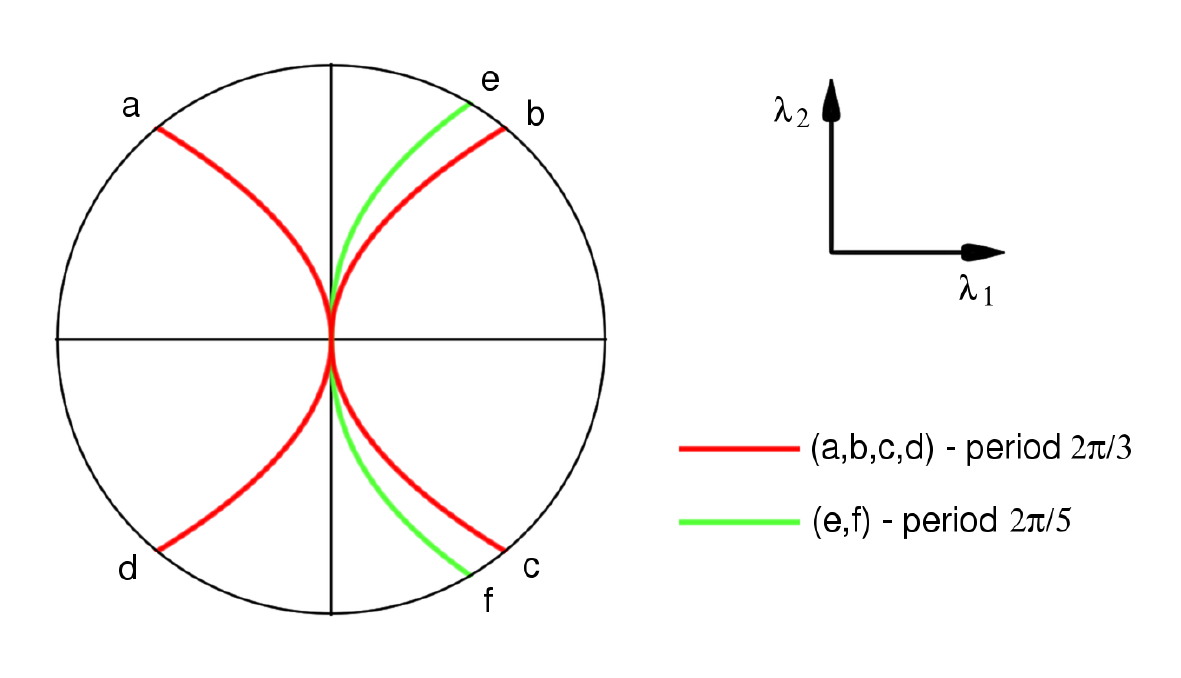}
\caption{The set of those $(\lambda_1,\lambda_2)\in D^2_r,$
$r=0.3,$ for which $(0,(\lambda_1,\lambda_2))\in\R^6\times\R^2$ is
a~global bifurcation point of the system from Example~\ref{exdeg}.
The legend on the right describes the minimal periods of solutions
bifurcating from the points of given curve.}
\label{figure1}
\end{figure}

The following results of additional symbolic computations and
estimates ensure that the above conclusions concerning bifurcation
points in $\set{0}\times D^2_r$ hold for every $r\leq 0.3.$ One has
$F_j(\lambda)\neq 0$ for every $j\in\N\backslash\set{3,5},$
$\lambda\in U\eqdf (-0.31,0.31)^2.$ The origin is the only singular
point of $F_3,$ $F_5,$ and $F=F_3\cdot F_5$ in $U.$ The sets of
zeros of $F_3,$ $F_5$ restricted to $D^2_r\backslash\set{0},$
$r=0.3,$ are disjoint and they have no common points with
the coordinate axes. Furthermore, the functions $F_3,$ $F_5,$
and $F$ satisfy the assumptions of Remark~\ref{singtang}
for $k=2$ and $r=0.3.$ Thus for $r=0.3$ every connected component
of $F_3^{-1}(\set{0})\cap D^2_r,$ $F_5^{-1}(\set{0})\cap D^2_r,$
and $F^{-1}(\set{0})\cap D^2_r$ contains the origin.

Theorems~\ref{strzdegwh} and~\ref{strzdeg} have been also applied
to find bifurcation points in $\set{0}\times D^2_r,$ $r=0.3,$
numerically as zeros of the functions $F_j,$ according to
formulae~\eqref{bifzerfir}, which has been performed by using the
program \emph{surf} and presented on Figure~\ref{figure1}.
The earlier conclusions ensure that the number of curves on
Figure~\ref{figure1}, their localization, and their relative
position do not change when passing to a~smaller scale.

One can summarize the above results as follows. The set of
bifurcation points in $\set{0}\times D^2_r,$ $r=0.3,$ is equal
to the set of global bifurcation points in this domain and consists
of six curves, for which the origin is the only common point. Apart
from the origin four curves (one curve in each quarter) consist of
branching points of solutions with the minimal period
$\frac{2}{3}\pi$ (and only such solutions), whereas two curves
(one curve in the first quarter and one curve in the fourth
quarter) consist of branching points of solutions with
the minimal period $\frac{2}{5}\pi$ (and only such solutions).
The origin is a~branching point of solutions with the
minimal periods $\frac{2}{3}\pi$ and solutions with the minimal
period $\frac{2}{5}\pi$ (and only such solutions). In particular,
the origin is a~symmetry breaking point.
\end{example}

\begin{example}{\label{exstat}}
Consider the Hamiltonian $H\colon\R^6\times\R^2\rightarrow\R$
defined by the formula
\begin{equation*}
H(x,\lambda)\equiv
H(x_1,\ldots,x_6,\lambda_1,\lambda_2)
=P(x_1,\ldots,x_6,\lambda_1,\lambda_2)
+Q(x_1,\ldots,x_6),
\end{equation*}
where
\begin{align*}
& P(x_1,\ldots,x_6,\lambda_1,\lambda_2) \\
& =\frac{1}{2}(4+3\lambda_1^{10}+\lambda_1^7\lambda_2
-\lambda_1^2\lambda_2^7
+\lambda_1^5\lambda_2^5-\lambda_1^4\lambda_2^5
-\lambda_1^3\lambda_2^4-\lambda_2^9)x_1^2 \\
&\quad +\frac{3}{2}x_2^2+x_3^2+\frac{1}{2}x_4^2
+\frac{1}{2}(3+3\lambda_1^7)x_5^2 \\
&\quad +\frac{1}{2}\lambda_1^2x_6^2
+(3\lambda_1^3+22\lambda_2^4)x_5x_6,\\
\end{align*}
\begin{equation}{\label{exstatadd}}
Q(x_1,\ldots,x_6)=x_1^3x_2+(x_1+x_4)x_3^2+x_4^2x_5+(x_6-x_5)^3.
\end{equation}

$H$ satisfies conditions (H1)-(H3) for $k=2$ and $x_0=0\in\R^6.$

Notice that in this case $x_0=0\in\R^6$ is an isolated critical
point of $H(\cdot,0),$ it is degenerate, and
$\ind{\nabla_xH(\cdot,0)}{0}=0.$

First the set of bifurcation points in  $\set{0}\times D^2_r$ will
be described for sufficiently small $r>0$ and then it will be
shown that the conclusions hold for every $r\leq 0.3.$

One has
\begin{equation*}
A(\lambda_1,\lambda_2) =\left[
\begin{array}{ccc}
4+3\lambda_1^{10}+\lambda_1^7\lambda_2-\lambda_1^2\lambda_2^7
+\lambda_1^5\lambda_2^5-\lambda_1^4\lambda_2^5
-\lambda_1^3\lambda_2^4-\lambda_2^9
& 0 & 0 \\
0 & 3 & 0 \\
0 & 0 & 2
\end{array}
\right],
\end{equation*}
\begin{equation*}
B(\lambda_1,\lambda_2) =\left[
\begin{array}{ccc}
1 & 0 & 0 \\
0 & 3+3\lambda_1^7 & 3\lambda_1^3+22\lambda_2^4 \\
0 & 3\lambda_1^3+22\lambda_2^4 & \lambda_1^2
\end{array}
\right].
\end{equation*}

Use will be made of Theorems~\ref{strniezdwh}, \ref{strniezd},
and Corollary~\ref{strniezdwhcor}
(see also Remark~\ref{remtwo}). Theorems~\ref{strzdegwh},
\ref{strzdeg}, and Corollary~\ref{strzdegwhcor}
are not suitable in this case. (It will be shown that the origin
is a~bifurcation point of nontrivial stationary solutions.)

Those of the functions $F_j,$ $j\in\N\cup\set{0},$ defined
by~\eqref{Fj}, which vanish at $(0,0)\in\R^2$ are
$F_0,$ $F_2,$ and $F_3,$ hence $X(0)=\set{0,2,3}$
(see also Lemmas~\ref{maxgen}, \ref{max}). One has
\begin{equation*}
F_0=f_0\cdot a_0,\;\;\;F_2=f_2\cdot a_2,\;\;\;F_3=f_3\cdot a_3,
\end{equation*}
where
\begin{align*}
f_0(\lambda_1,\lambda_2)
& =18\lambda_1^9+18\lambda_1^2-54\lambda_1^6
-792\lambda_1^3\lambda_2^4-2904\lambda_2^8,\\
f_2(\lambda_1,\lambda_2)
&=3\lambda_1^{10}+\lambda_1^7\lambda_2-\lambda_1^2\lambda_2^7
+\lambda_1^5\lambda_2^5-\lambda_1^4\lambda_2^5
-\lambda_1^3\lambda_2^4-\lambda_2^9, \\
f_3(\lambda_1,\lambda_2)
&=18\lambda_1^9-81\lambda_1^7-54\lambda_1^6
-792\lambda_1^3\lambda_2^4-2904\lambda_2^8,
\end{align*}
\begin{align*}
a_0(\lambda_1,\lambda_2) &=f_2(\lambda_1,\lambda_2)+4, \\
a_2(\lambda_1,\lambda_2)
& =f_0(\lambda_1,\lambda_2)-8\lambda_1^2-36\lambda_1^7-20, \\
a_3(\lambda_1,\lambda_2) &=a_0(\lambda_1,\lambda_2)-9.
\end{align*}
The functions $a_0,$ $a_2,$ $a_3$ have no zeros in
$U\eqdf (-0.31,0.31)^2.$
Thus $F_0,$ $F_2,$ $F_3$ can be replaced by $f_0,$ $f_2,$ $f_3$ in
computations.

It has been checked by symbolic computations that
$(0,0)$ is an isolated singular point of the
functions $F_0,$ $F_2,$ $F_3,$ and $F=F_0\cdot F_2\cdot F_3.$ Thus
they are admissible and, according to Remark~\ref{probnerem}, $g_i,$
$i=1,\ldots,4,$ are test functions for them.
Application of {\L}\c{e}cki's program gives
\begin{equation}{\label{indexsecgen}}
\begin{aligned}
& \ind{h(g_1,F_0)}{0}=2,\quad \ind{h(g_1,F_2)}{0}=3, \\
& \ind{h(g_1,F_3)}{0}=2,\quad \ind{h(g_1,F)}{0}=7.
\end{aligned}
\end{equation}
In view of Theorems~\ref{strniezdwh},~\ref{strniezd}
and Corollary~\ref{strniezdwhcor} (for $g_+=g_1$), for every
sufficiently small $r>0$ one has
\begin{equation}{\label{glbifexsecfir}}
\begin{aligned}
\Bif(0)\cap D^2_r &=\GlBif(0)\cap D^2_r=F^{-1}(\set{0})\cap D^2_r \\
& =\left(\GlBif_0(0)\cup \GlBif_2(0)\cup \GlBif_3(0)\right)
\cap D^2_r,
\end{aligned}
\end{equation}

\begin{equation}{\label{glbifexsecbis}}
\begin{aligned}
\GlBif_0^{min}(0)\cap D^2_r \equiv \GlBif_0(0)\cap D^2_r
& =F_0^{-1}(\set{0})\cap D^2_r, \\
\GlBif_2(0)\cap D^2_r
& =\left(F_0^{-1}(\set{0})\cup F_2^{-1}(\set{0})\right)
\cap D^2_r, \\
\GlBif_3(0)\cap D^2_r
& =\left( F_0^{-1}(\set{0})\cup F_3^{-1}(\set{0})\right)
\cap D^2_r,
\end{aligned}
\end{equation}

\begin{equation}{\label{glbifexsecthir}}
\begin{aligned}
\GlBif_2^{min}(0)\cap D^2_r\backslash\set{0}
& =  F_2^{-1}(\set{0})\cap D^2_r\backslash\set{0}, \\
\GlBif_3^{min}(0)\cap D^2_r\backslash\set{0}
& =F_3^{-1}(\set{0})\cap D^2_r\backslash\set{0}.
\end{aligned}
\end{equation}

According to Corollary~\ref{strniezdwhcor}, the set
$\GlBif(0)\cap D^2_r\backslash\set{0}$ consists of $b(F)=14$
curves. The numbers of curves forming the sets
$\GlBif_0^{min}(0)\cap D^2_r\backslash\set{0},$
$\GlBif_2^{min}(0)\cap D^2_r\backslash\set{0},$
$\GlBif_3^{min}(0)\cap D^2_r\backslash\set{0}$
are equal to $b(F_0)=4,$ $b(F_2)=6,$
$b(F_3)=4,$ respectively.

In the aim of applying Corollary~\ref{wnIRcw} to localize
the curves in the quarters of the plane it has been checked
by using {\L}\c{e}cki's program that
\begin{equation}{\label{indexsecspec}}
\begin{aligned}
\ind{h(g_2,F_0)}{0} & =  0,\;\;\ind{h(g_2,F_2)}{0}=0,\;\;
\ind{h(g_2,F_3)}{0}=-2, \\
\ind{h(g_3,F_0)}{0} & =  0,\;\;\ind{h(g_3,F_2)}{0}=1,\;\;
\ind{h(g_3,F_3)}{0}=0, \\
\ind{h(g_4,F_0)}{0} & =  0,\;\;\ind{h(g_4,F_2)}{0}=-2,\;\;
\ind{h(g_4,F_3)}{0}=0.
\end{aligned}
\end{equation}
Taking into account~\eqref{indexsecgen}, \eqref{indexsecspec},
and Corollary~\ref{wnIRcw} one obtains
\begin{align*}
b_1(F_0) & =  1,\;\; b_2(F_0)=1,\;\; b_3(F_0)=1,\;\;b_4(F_0)=1, \\
b_1(F_2) & =  1,\;\; b_2(F_2)=3,\;\; b_3(F_2)=0,\;\;b_4(F_2)=2, \\
b_1(F_3) & =  0,\;\; b_2(F_3)=2,\;\; b_3(F_3)=2,\;\;b_4(F_3)=0.
\end{align*}

\begin{figure}
\includegraphics{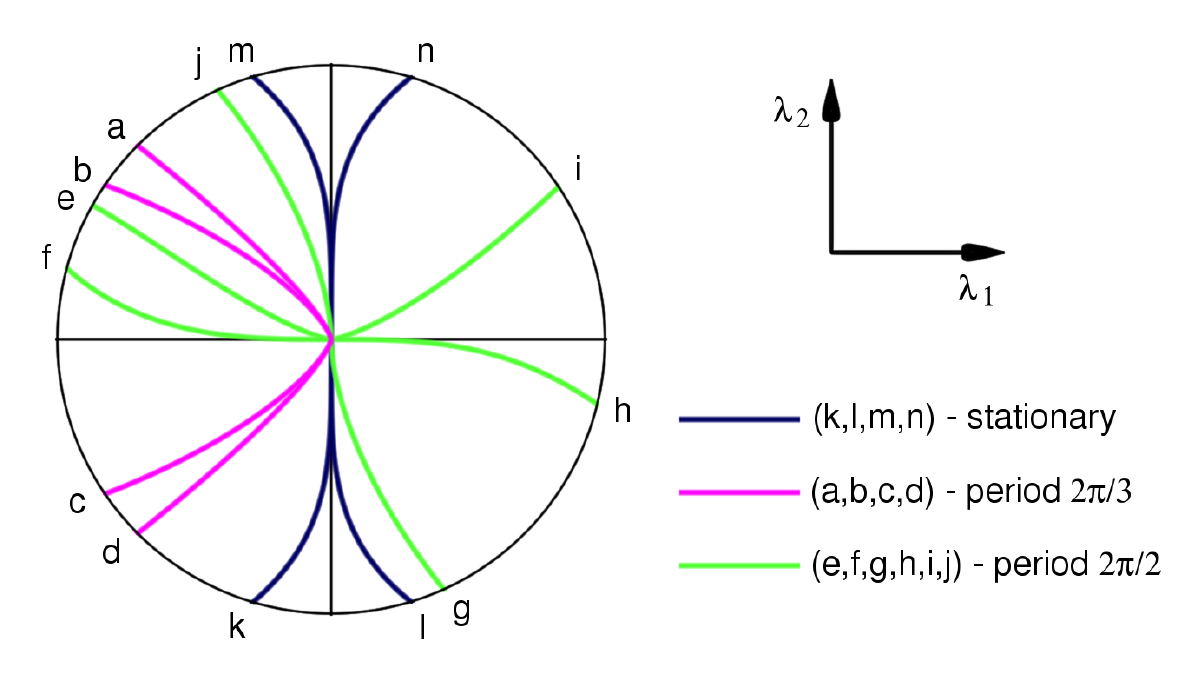}
\caption{The set of those $(\lambda_1,\lambda_2)\in D^2_r,$
$r=0.3,$ for which $(0,(\lambda_1,\lambda_2))\in\R^6\times\R^2$ is
a~global bifurcation point of the system from
Example~\ref{exstat}. The legend on the right describes the minimal
periods of solutions bifurcating from the points of given curve.}
\label{figure2}
\end{figure}

The following results of additional symbolic computations and
estimates ensure that the above conclusions concerning bifurcation
points in $\set{0}\times D^2_r$ hold for every $r\leq 0.3.$
One has $F_j(\lambda)\neq 0$ for every
$j\in\N\backslash\set{0,2,3},$ $\lambda\in U\eqdf (-0.31,0.31)^3.$
The origin is the only singular point of $F_0,$ $F_2,$ $F_3,$ and
$F=F_0\cdot F_2\cdot F_3$ in $U.$ The sets of zeros of $F_0,$
$F_2,$ $F_3$ restricted to $D^2_r\backslash\set{0},$ $r=0.3,$ are
pairwise disjoint and they have no common points with the coordinate
axes. Moreover, the functions $F_0,$ $F_2,$ $F_3,$ and $F$
satisfy the assumptions of Remark~\ref{singtang} for $k=2$ and
$r=0.3.$ Thus for $r=0.3$ every connected component of
$F_0^{-1}(\set{0})\cap D^2_r,$ $F_2^{-1}(\set{0})\cap D^2_r,$
$F_3^{-1}(\set{0})\cap D^2_r,$ and $F^{-1}(\set{0})\cap D^2_r$
contains the origin.

Theorems~\ref{strniezdwh} and~\ref{strniezd} have been also applied
to find bifurcation points in $\set{0}\times D^2_r,$ $r=0.3,$
numerically as zeros of the functions $F_j,$ according to
formulae~\eqref{glbifexsecfir}-\eqref{glbifexsecthir}, which
has been performed by using the program \emph{surf} and presented
on Figure~\ref{figure2}.
The earlier conclusions ensure that the number of curves on
Figure~\ref{figure2}, their localization, and their relative
position do not change when passing to a~smaller scale.

The above results can be summarized as follows. The set of
bifurcation points in $\set{0}\times D^2_r,$ $r=0.3,$ is equal
to the set of global bifurcation points in this domain
and consists of fourteen curves, for which the origin is the only
common point. Apart from the origin four curves (one curve in each
quarter) consist of branching points of nontrivial
stationary solutions (and only such solutions), six curves
(one curve in the first quarter, three curves in the second
quarter, and two curves in the fourth quarter) consist of branching
points of solutions with the minimal period $\pi$
(and only such solutions), and four curves (two curves
in the second quarter and two curves in the third quarter)
consist of branching points of solutions
with the minimal period $\frac{2}{3}\pi$
(and only such solutions). The origin is a~symmetry breaking point,
since it is a~bifurcation point of stationary solutions, solutions
with the minimal periods $\pi,$ and solutions with the minimal
period $\frac{2}{3}\pi$ (as a~cluster point of branching
points of such solutions). The origin is also a~global
bifurcation point of stationary solutions, \mbox{$2$-solu}tions, and
\mbox{$3$-solu}tions. However, it has not been proved that it is
a~branching point of solutions with the minimal periods $\pi$
and $\frac{2}{3}\pi.$
\end{example}

\begin{example}{\label{surfdeg}}
Let the Hamiltonian $H\colon\R^6\times\R^3\rightarrow\R$ be of the
form
\begin{equation*}
H(x,\lambda)\equiv
H(x_1,\ldots,x_6,\lambda_1,\lambda_2,\lambda_3)
=P(x_1,\ldots,x_6,\lambda_1,\lambda_2,\lambda_3)
+Q(x_1,\ldots,x_6),
\end{equation*}
where
\begin{align*}
P(x_1,\ldots,x_6,\lambda_1,\lambda_2,\lambda_3)
&= \frac{1}{2}(7-\lambda_1^4)x_1^2
+\frac{1}{2}(1-\lambda_1^{13})x_3^2
+\frac{7}{2}x_4^2 \\
&\quad +8x_6^2-\lambda_3^4x_1x_3+\lambda_2^3x_4x_6+x_2^4+x_5^4
\end{align*}
and $Q\in C^2(\R^6,\R)$ is the same as in Example~\ref{exdeg},
i.e. it has a~local minimum at the origin and $\nabla^2Q(0)=0,$
see~\eqref{exdegadd} for instance.

$H$ satisfies conditions (H1)-(H3) for $k=3$ and $x_0=0\in\R^6.$

The set of bifurcation points in $\set{0}\times D^3_r$ will be
investigated for $r=0.3.$

One has
\begin{equation*}
A(\lambda_1,\lambda_2,\lambda_3) =\left[
\begin{array}{ccc}
7-\lambda_1^4&0&-\lambda_3^4\\
0&0&0\\
-\lambda_3^4&0&1-\lambda_1^{13}
\end{array}
\right],
\end{equation*}
\begin{equation*}
B(\lambda_1,\lambda_2,\lambda_3) =\left[
\begin{array}{ccc}
7&0&\lambda_2^3\\
0&0&0\\
\lambda_2^3&0&16
\end{array}
\right].
\end{equation*}
Those of the functions $F_j,$ $j\in\N\cup\set{0},$ defined
by~\eqref{Fj}, which vanish at $(0,0,0)\in\R^3$ are $F_0,$ $F_4,$
and $F_7,$ hence $X^+=\set{4,7}.$ One has
\begin{align*}
F_0(\lambda_1,\lambda_2,\lambda_3)&\equiv 0,\\
F_4(\lambda_1,\lambda_2,\lambda_3)
& =-1792\lambda_1^{17}-512\lambda_3^4\lambda_2^3
+8448\lambda_1^{13} -16\lambda_3^8\lambda_2^6+1792\lambda_3^8 \\
&\quad +16\lambda_2^6\lambda_1^{17} -112\lambda_2^6\lambda_1^{13}
-16\lambda_2^6\lambda_1^4+112\lambda_2^6,\\
F_7(\lambda_1,\lambda_2,\lambda_3)
& =-11319\lambda_1^4-5488\lambda_1^{17}-49\lambda_3^8\lambda_2^6
-4802\lambda_3^4\lambda_2^3+5488\lambda_3^8 \\
&\quad +49\lambda_2^6\lambda_1^{17}
-343\lambda_2^6\lambda_1^{13}
-49\lambda_2^6\lambda_1^4+343\lambda_2^6 .
\end{align*}

\begin{figure}
\includegraphics{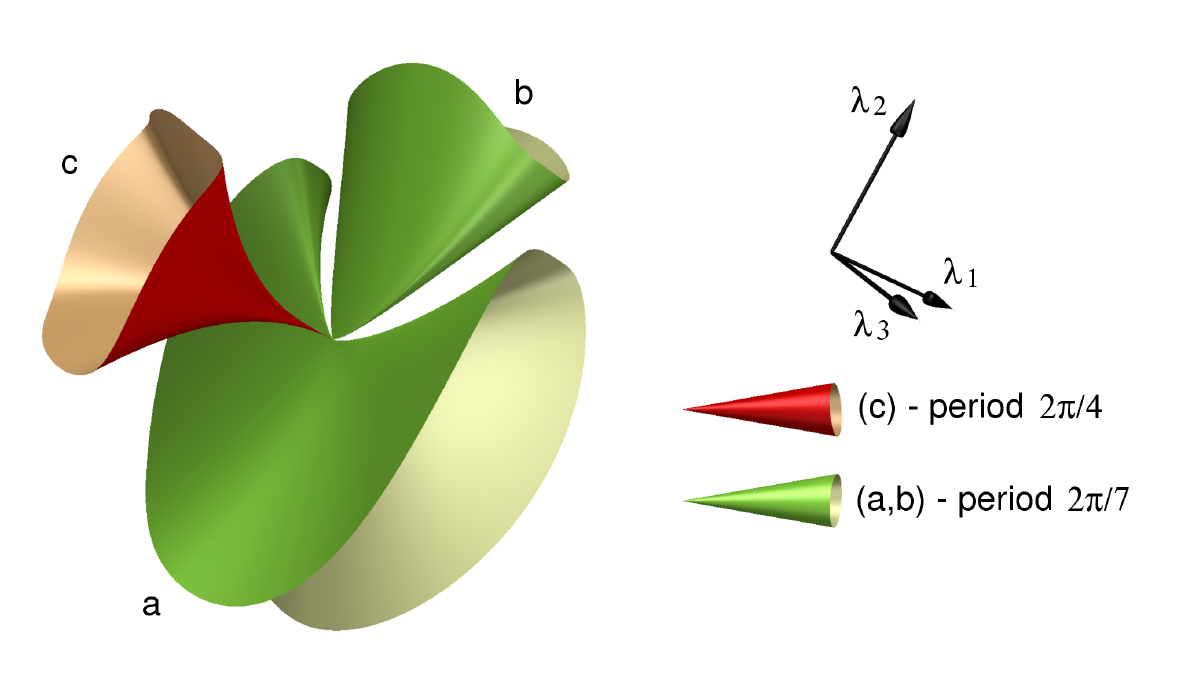}
\caption{The set of those
$(\lambda_1,\lambda_2,\lambda_3)\in D^3_r,$ $r=0.3,$ for which
$(0,(\lambda_1,\lambda_2,\lambda_3))\in\R^6\times\R^3$ is a~global
bifurcation point of the system from Example~\ref{surfdeg}.
The legend on the right describes the minimal periods of solutions
bifurcating from the points of given surface.}
\label{figure3}
\end{figure}

Use will be made of Theorems~\ref{strzdegwh}, \ref{strzdeg}
(see also Remark~\ref{surcorrem}).
Theorems~\ref{strniezdwh}, \ref{strniezd} cannot be applied,
since $F_0=0,$ which means that all the points
$(x_0,\lambda),$ $\lambda\in\R^2,$ are degenerate.

Analogously as in Example~\ref{exdeg} it has been checked that
conditions (E1$(0,\lambda)$), (E2$(0,\lambda)$) are satisfied for
every $\lambda\in U \eqdf (-0.31,0.31)^3.$ Other symbolic
computations and estimates show what follows. One has
$F_j(\lambda)\neq 0$ for every $j\in\N\backslash\set{4,7},$
$\lambda\in U.$ The origin is the only singular point of $F_4,$
$F_7,$ and $F=F_4\cdot F_7$ in $U.$ In particular, the sets
of zeros of $F_4,$ $F_7$ restricted to $D^3_r\backslash\set{0},$
$r=0.3,$ are disjoint. Furthermore, the functions $F_4,$ $F_7,$
and $F$ satisfy the assumptions of Remark~\ref{singtang}
for $k=3$ and $r=0.3.$ Thus for $r=0.3$ every connected component
of $F_4^{-1}(\set{0})\cap D^3_r,$ $F_7^{-1}(\set{0})\cap D^3_r,$
and $F^{-1}(\set{0})\cap D^3_r$ contains the origin.
It has also been checked that $F_4$ and $F_7$ do have zeros
in $D^3_r\backslash\set{0},$ $r=0.3.$

By Theorems~\ref{strzdegwh}, \ref{strzdeg} the following equalities
hold for every $r\leq 0.3.$
\begin{equation}{\label{descbifexfo}}
\begin{aligned}
\Bif(0)\cap D^3_r=\GlBif(0)\cap D^3_r
& =F^{-1}(\set{0})\cap D^3_r \\
& =\left(\GlBif_4^{min}(0)\cup
\GlBif_7^{min}(0)\right)\cap D^3_r, \\
\GlBif_4^{min}\cap D^3_r & =F_4^{-1}(\set{0})\cap D^3_r, \\
\GlBif_7^{min}\cap D^3_r & =F_7^{-1}(\set{0})\cap D^3_r.
\end{aligned}
\end{equation}
The fact that $0\in \GlBif_4^{min}\cap D^3_r$ and
$0\in \GlBif_7^{min}\cap D^3_r$ follows from Lemma~\ref{isotropy}
and Remark~\ref{inters}. (The only minimal periods of nontrivial
solutions in a~neighbourhood of the origin are $\frac{2\pi}{4}$ and
$\frac{2\pi}{7}.$)

The results of numerical application of Theorems~\ref{strzdegwh}
and~\ref{strzdeg}, consisting in finding global bifurcation points
in $\set{0}\times D^3_r,$ $r=0.3,$ as zeros of the functions $F_j,$
according to formulae~\eqref{descbifexfo},
have been obtained by using the program \emph{surf} and presented
on Figure~\ref{figure3}. The earlier conclusions ensure that the
number of the cones on Figure~\ref{figure3} does not change when
passing to a~smaller scale.
\end{example}

\begin{example}{\label{surfstat}}
Let $H\colon\R^6\times\R^3\rightarrow\R$ be the Hamiltonian
defined by
\begin{equation*}
H(x,\lambda)\equiv
H(x_1,\ldots,x_6,\lambda_1,\lambda_2,\lambda_3)
=P(x_1,\ldots,x_6,\lambda_1,\lambda_2,\lambda_3)
+Q(x_1,\ldots,x_6),
\end{equation*}
where
\begin{align*}
& P(x_1,\ldots,x_6,\lambda_1,\lambda_2,\lambda_3) \\
& =\frac{1}{2}(16-85\lambda_1^9+11\lambda_1^5\lambda_3^2
-6\lambda_1^3\lambda_2^2-\lambda_2^3\lambda_3^2
+6\lambda_1^5\lambda_3^4+17\lambda_2^4+\lambda_3^6)x_1^2 \\
&\quad  +\frac{5}{2}x_2^2+x_3^2+\frac{1}{2}x_4^2
+\frac{1}{2}(5+\lambda_1^{13}+8\lambda_2^8)x_5^2 \\
&\quad
+\frac{1}{2}(2\lambda_1^2+4\lambda_3^3\lambda_2^2+\lambda_2^4)x_6^2
+(\lambda_2^3-\lambda_3^2)x_5x_6,
\end{align*}
and $Q$ is defined by the formula~\eqref{exstatadd} from
Example~\ref{exstat}.

$H$ satisfies conditions (H1)-(H3) for $k=3$ and $x_0=0\in\R^6.$

Notice that in this case $x_0=0\in\R^6$ is an isolated critical
point of $H(\cdot,0),$ it is degenerate, and
$\ind{\nabla_xH(\cdot,0)}{0}=0.$

The set of bifurcation points in $\set{0}\times D^3_r$ will be
investigated for $r=0.3.$

Setting
\begin{equation*}
h(\lambda_1,\lambda_2,\lambda_3)\eqdf
16-85\lambda_1^9+11\lambda_1^5\lambda_3^2-6\lambda_1^3\lambda_2^2
-\lambda_2^3\lambda_3^2+6\lambda_1^5\lambda_3^4
+17\lambda_2^4+\lambda_3^6
\end{equation*}
one has
\begin{equation*}
A(\lambda_1,\lambda_2,\lambda_3) =\left[
\begin{array}{ccc}
h(\lambda_1,\lambda_2,\lambda_3) & 0 & 0\\
0 & 5 & 0 \\
0 & 0 & 2
\end{array}
\right],
\end{equation*}
\begin{equation*}
B(\lambda_1,\lambda_2,\lambda_3) =\left[
\begin{array}{ccc}
1 & 0 & 0 \\
0 & 5+\lambda_1^{13}+8\lambda_2^8 & \lambda_2^3-\lambda_3^2 \\
0 & \lambda_2^3-\lambda_3^2
& 2\lambda_1^2+4\lambda_3^3\lambda_2^2+\lambda_ 2^4
\end{array}
\right].
\end{equation*}

\begin{figure}
\includegraphics{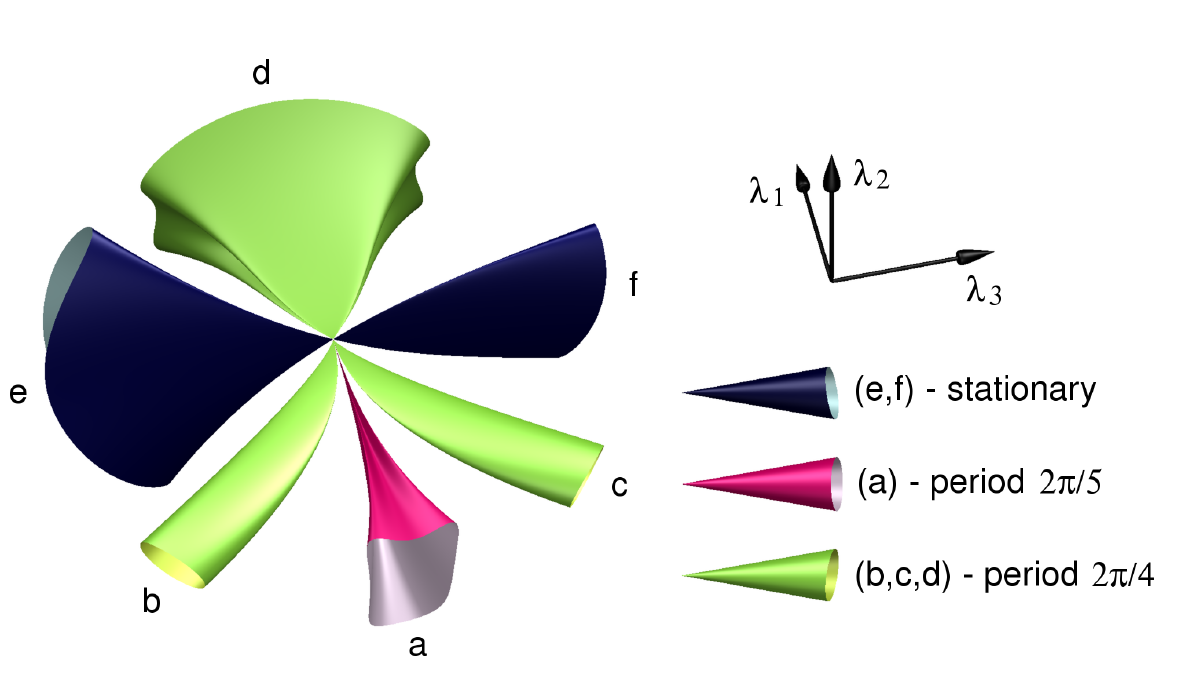}
\caption{The set of those
$(\lambda_1,\lambda_2,\lambda_3)\in D^3_r,$ $r=0.3,$ for which
$(0,(\lambda_1,\lambda_2,\lambda_3))\in\R^6\times\R^3$ is a~global
bifurcation point of the system from Example~\ref{surfstat}.
The legend on the right describes the minimal periods of solutions
bifurcating from the points of given surface.}
\label{figure4}
\end{figure}

Use will be made of Theorems~\ref{strniezdwh}, \ref{strniezd}
(see also Remark~\ref{surcorrem}).
Theorems~\ref{strzdegwh}, \ref{strzdeg} are not suitable
in this case.
(The origin is a~bifurcation point of nontrivial stationary
solutions.)

Those of the functions $F_j,$ $j\in\N\cup\set{0},$ defined
by~\eqref{Fj}, which vanish at $(0,0,0)\in\R^3$ are
$F_0,$ $F_4,$ and $F_5,$ hence $X(0)=\set{0,4,5}.$ One has
\begin{equation*}
F_0=f_0\cdot a_0,\;\;\;F_2=f_2\cdot a_2,\;\;\;F_3=f_3\cdot a_3,
\end{equation*}
where
\begin{align*}
f_0(\lambda_1,\lambda_2,\lambda_3)
& =20\lambda_1^{15}+40\lambda_1^{13}\lambda_3^3\lambda_2^2
+10\lambda_1^{13}\lambda_2^4
+100\lambda_1^2+200\lambda_3^3\lambda_2^2+50\lambda_2^4 \\
&\quad +160\lambda_2^8\lambda_1^2
+320\lambda_2^{10}\lambda_3^3
+80\lambda_2^{12}-10\lambda_2^6
+20\lambda_2^3\lambda_3^2-10\lambda_3^4,\\
f_4(\lambda_1,\lambda_2,\lambda_3)
&=-85\lambda_1^9+11\lambda_1^5\lambda_3^2
-6\lambda_1^3\lambda_2^2-\lambda_2^3\lambda_3^2
+6\lambda_1^5\lambda_3^4+17\lambda_2^4+\lambda_3^6 ,\\
f_5(\lambda_1,\lambda_2,\lambda_3)
&=20\lambda_1^{15}+40\lambda_1^{13}\lambda_3^3\lambda_2^2
+10\lambda_1^{13}\lambda_2^4-125\lambda_1^{13}
+160\lambda_2^8\lambda_1^2 \\
&\quad +320\lambda_2^{10}\lambda_3^3
+80\lambda_2^{12}-1000\lambda_2^8-10\lambda_2^6
+20\lambda_2^3\lambda_3^2-10\lambda_3^4,
\end{align*}
\begin{align*}
a_0(\lambda_1,\lambda_2,\lambda_3)
& =f_4(\lambda_1,\lambda_2,\lambda_3)+16, \\
a_4(\lambda_1,\lambda_2,\lambda_3)
&=f_0(\lambda_1,\lambda_2,\lambda_3)
-64\lambda_1^2-80\lambda_1^{13}-128\lambda_3^3\lambda_2^2 \\
&\quad -32\lambda_2^4-640\lambda_2^8-144, \\
a_5(\lambda_1,\lambda_2,\lambda_3)
&=f_4(\lambda_1,\lambda_2,\lambda_3)-9.
\end{align*}
The functions $a_0,$ $a_4,$ $a_5$ have no zeros in
$U\eqdf (-0.31,0.31)^3.$
Thus $F_0,$ $F_4,$ $F_5$ can be replaced by $f_0,$ $f_4,$ $f_5$ in
computations.

Symbolic computations and estimates show what follows.
One has $F_j(\lambda)\neq 0$ for every
$j\in\N\backslash\set{0,4,5},$ $\lambda\in U\eqdf (-0.31,0.31)^3.$
The origin is the only singular point of $F_0,$ $F_4,$ $F_5,$ and
$F=F_0\cdot F_4\cdot F_5$ in $U.$ In particular, the sets of zeros
of $F_0,$ $F_4,$ $F_5$ restricted to $D^3_r\backslash\set{0},$
$r=0.3,$ are pairwise disjoint. Moreover, the functions
$F_0,$ $F_4,$ $F_5,$ and $F$
satisfy the assumptions of Remark~\ref{singtang} for $k=3$ and
$r=0.3.$ Thus for $r=0.3$
every connected component of $F_0^{-1}(\set{0})\cap D^3_r,$
$F_4^{-1}(\set{0})\cap D^3_r,$
$F_5^{-1}(\set{0})\cap D^3_r,$ and $F^{-1}(\set{0})\cap D^3_r$
contains the origin.
It has also been checked that $F_0,$ $F_4,$ and $F_5$ do have
zeros in $D^3_r\backslash\set{0},$ $r=0.3.$

By Theorems~\ref{strniezdwh}, \ref{strniezd} the following
equalities hold for every $r\leq 0.3.$
\begin{equation}{\label{descbiffouspfir}}
\begin{aligned}
\Bif(0)\cap D^3_r&=\GlBif(0)\cap D^3_r =F^{-1}(\set{0})\cap D^3_r \\
& =\left(\GlBif_0(0)\cup \GlBif_4(0)\cup
\GlBif_5(0)\right)\cap D^3_r,
\end{aligned}
\end{equation}
\begin{equation}{\label{descbiffouspsec}}
\begin{aligned}
\GlBif_0^{min}(0)\cap D^3_r \equiv \GlBif_0(0)\cap D^3_r
& =  F_0^{-1}(\set{0})\cap D^3_r, \\
\GlBif_4(0)\cap D^3_r & =  \left(F_0^{-1}(\set{0})\cup
F_4^{-1}(\set{0})\right)\cap D^3_r, \\
\GlBif_5(0)\cap D^3_r & =  \left( F_0^{-1}(\set{0})\cup
F_5^{-1}(\set{0})\right)\cap D^3_r,
\end{aligned}
\end{equation}
\begin{equation}{\label{descbiffouspthir}}
\begin{aligned}
\GlBif_4^{min}(0)\cap D^3_r\backslash\set{0}
& =  F_4^{-1}(\set{0})\cap D^3_r\backslash\set{0}, \\
\GlBif_5^{min}(0)\cap D^3_r\backslash\set{0}
& =F_5^{-1}(\set{0})\cap D^3_r\backslash\set{0}.
\end{aligned}
\end{equation}

The results of numerical application of Theorems~\ref{strniezdwh}
and~\ref{strniezd}, consisting in finding global bifurcation points
in $\set{0}\times D^3_r,$ $r=0.3,$ as zeros of the functions $F_j,$
according to
formulae~\eqref{descbiffouspfir}-\eqref{descbiffouspthir},
have been obtained by using the program \emph{surf} and presented
on Figure~\ref{figure4}.
The earlier conclusions ensure that the number of the cones on
Figure~\ref{figure4} does not change when passing to a~smaller
scale.
\end{example}

\begin{remark}{\label{surcorrem}}
Corollaries~\ref{multzdeg}, \ref{multstat}, and results
from~\cite{SfMA,LS} can be used in Examples~\ref{surfdeg}
and~\ref{surfstat} to verify the number of the cones forming
the set of bifurcation points and to confirm that the origin is
a~symmetry breaking point.
\end{remark}

Examples analogous to Examples~\ref{exdeg},~\ref{exstat} can be
constructed for any number of degrees of freedom,
whereas examples similar to
Examples~\ref{surfdeg},~\ref{surfstat} can be given for
any number of parameters and any number of degrees of freedom.

\appendix

\section{Description of semianalytic sets}{\label{semianal}}

In this appendix the relevant results from~\cite{Sf},
in the case they have been used in
Sections~\ref{structsection},~\ref{examples}, are
summarized for the convenience of the reader.

In what follows use is made of Definition~\ref{admissible} of
admissible function and Definition~\ref{testfuncdef} of test
function.

As well known, if $F\colon\R^2\rightarrow\R$ is an admissible
mapping then for sufficiently small $r>0$ the set
$F^{-1}(\set{0})\cap D^2_r\backslash\set{0}$
is either empty or it is a~union of finitely
many disjoint analytic curves, each of which meets the origin and
crosses $S^1_r$ transversally in one point.

If $g\colon\R^2\rightarrow\R$ is a~test function for an admissible
mapping $F\colon\R^2\rightarrow\R$ then for sufficiently small
$r>0$ the function $g$ has a~constant sign on every connected
component of the set $F^{-1}(\set{0})\cap D^2_r\backslash\set{0}$
(i.e. on each of the analytic curves forming this set).

The following notation is used.
\begin{trivlist}
\item $b(F)=$ the number of components of the set
      $F^{-1}(\set{0})\cap D^2_r\backslash\set{0},$
\item $b_{+}(g,F)=$ the number of components of
      $F^{-1}(\set{0})\cap D^2_r\backslash\set{0}$ on which $g$ is
      positive,
\item $b_{-}(g,F)=$ the number of components of
      $F^{-1}(\set{0})\cap D^2_r\backslash\set{0}$ on which $g$ is
      negative.
\end{trivlist}

Clearly, $b_{+}(g,F)+b_{-}(g,F)=b(F).$

Let $\Jac(g,F) \colon \R^2\rightarrow\R$ be the Jacobian
of the mapping $(g,F)\colon\R^2\rightarrow\R^2,$ and let the mapping
$h(g,F)\colon\R^2\rightarrow\R^2$ be defined by
\begin{equation*}
h(g,F)=(\Jac(g,F),F).
\end{equation*}

In the following theorem $\ind{h(g,F)}{0}$ denotes the topological
index of $0\in\R^2$ with respect to $h(g,F)$
(see Section~\ref{degree}).

\begin{theorem}[\cite{Sf}]{\label{szafr}} If
$g\colon\R^2\rightarrow\R$ is a~test function for an admissible
mapping $F\colon\R^2\rightarrow\R$ then $0\in\R^2$ is isolated in
$h(g,F)^{-1}(\set{0})$ and
\begin{equation}{\label{szafrform}}
b_{+}(g,F)-b_{-}(g,F)=2\cdot\ind{h(g,F)}{0}.
\end{equation}
\end{theorem}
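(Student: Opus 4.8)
The plan is to read the Jacobian geometrically. Writing $(\nabla F)^{\perp}=(-\partial_2 F,\partial_1 F)$ for the rotation of $\nabla F$ tangent to the curve $C\eqdf F^{-1}(\set{0}),$ one has the identity
\begin{equation*}
\nabla g\cdot(\nabla F)^{\perp}=-\Jac(g,F).
\end{equation*}
Since $(\nabla F)^{\perp}$ spans the tangent line to $C$ at its regular points, this says that the sign of $\Jac(g,F)$ along $C$ records whether $g$ increases or decreases as one moves along $C$ in the direction $(\nabla F)^{\perp}.$ In particular, off the origin the zeros of $h(g,F)=(\Jac(g,F),F)$ are exactly the critical points of the restriction $g|_{C}.$ I would then compute $\ind{h(g,F)}{0}$ as the winding number of the loop $t\mapsto h(g,F)(re^{it})$ for small $r>0.$

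First I would establish that $0$ is isolated in $h(g,F)^{-1}(\set{0}).$ By admissibility $\nabla F\neq 0$ on $C\backslash\set{0}$ near the origin, so any zero of $h(g,F)$ other than $0$ lies on one of the finitely many analytic half-branches of $C$ described before the theorem. If such zeros accumulated at $0,$ then infinitely many would lie on a single analytic half-branch $\gamma;$ parametrising $\gamma$ real-analytically, $\Jac(g,F)\circ\gamma$ would be a real-analytic function with zeros accumulating at the parameter value giving the origin, hence $\Jac(g,F)\equiv 0$ on $\gamma.$ By the identity above $g$ would then be constant along $\gamma,$ and since $g(0)=0$ (the origin lies in $g^{-1}(\set{0})\cap F^{-1}(\set{0})$) this forces $g\equiv 0$ on $\gamma,$ contradicting the defining property of a test function that $0$ be isolated in $g^{-1}(\set{0})\cap F^{-1}(\set{0}).$ Thus $0$ is isolated, and for all sufficiently small $r>0$ the map $h(g,F)$ has no zero in $D^2_r\backslash\set{0}.$

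Fix such a small $r.$ Then $\Jac(g,F)$ has no zero on $C\cap D^2_r\backslash\set{0},$ so by the identity $g$ is strictly monotone along each half-branch $\gamma_i;$ combined with $g(0)=0$ and the constant sign $\varepsilon_i\in\set{+1,-1}$ of $g$ on $\gamma_i,$ the derivative of $g$ in the outward direction along $\gamma_i$ has constant sign $\varepsilon_i,$ in particular at the endpoint $p_i$ where $\gamma_i$ crosses $S^1_r$ transversally. Writing $h(g,F)(re^{it})=(U(t),V(t))$ with $V=F(re^{it}),$ the winding number of this loop about the origin equals $\tfrac12\sum_i \sgn{U(t_i)}\,\sgn{V'(t_i)},$ the sum running over the parameters $t_i$ with $V(t_i)=0,$ i.e. over the endpoints $p_i.$ At each $p_i$ the transversality of $\gamma_i$ to $S^1_r,$ together with the identity and the monotonicity just noted, yields $\sgn{U(t_i)}=\sgn{\Jac(g,F)(p_i)}=\varepsilon_i\,\sgn{V'(t_i)};$ substituting, and using that each $\sgn{V'(t_i)}$ equals $\pm1,$ gives $\ind{h(g,F)}{0}=\tfrac12\sum_i\varepsilon_i=\tfrac12\left(b_+(g,F)-b_-(g,F)\right),$ which is \eqref{szafrform}.

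The hard part will be the orientation bookkeeping in the last step: expressing the index as the boundary winding number with the correct sign, and verifying carefully the chain of sign identities at each endpoint $p_i$ linking $\sgn{\Jac(g,F)(p_i)},$ the direction $\sgn{V'(t_i)}$ in which $F$ changes sign along $S^1_r,$ the outward tangent of $\gamma_i$ (fixed by transversality), and the branch sign $\varepsilon_i.$ These all reduce to the single relation $\nabla g\cdot(\nabla F)^{\perp}=-\Jac(g,F)$ evaluated on the outward unit tangent $\pm(\nabla F(p_i))^{\perp}/\abs{\nabla F(p_i)},$ but the signs must be tracked consistently to land exactly on the factor $2$ rather than on a spurious sign.
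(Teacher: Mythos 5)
This theorem is one the paper does not prove at all: it is quoted from Szafraniec's paper \cite{Sf}, and Appendix~A exists only to record the statement for the reader's convenience. So your proposal cannot be compared with an internal argument; judged on its own merits, it is correct, and it is the natural elementary route to such a result (a ``half Poincar\'e--Hopf'' count along branches via the boundary winding number). Your opening identity is right with the convention $(a,b)^{\perp}=(-b,a),$ since $\nabla g\cdot(\nabla F)^{\perp}=-\partial_1g\,\partial_2F+\partial_2g\,\partial_1F=-\Jac(g,F).$ The isolation argument is sound: every zero of $h(g,F)$ off the origin lies on $F^{-1}(\set{0}),$ which near $0$ is a finite union of half-branches on which $\nabla F\neq 0$ by admissibility, so each branch admits a regular real-analytic (Puiseux) parametrization away from the origin; accumulation of zeros would force $\Jac(g,F)\equiv 0$ on one branch by the one-variable identity theorem, hence $g$ constant there, hence $g\equiv 0$ on the branch because $g(0)=0$ is built into the definition of test function --- contradicting that $0$ is isolated in $g^{-1}(\set{0})\cap F^{-1}(\set{0}).$ The crossing formula $w=\tfrac12\sum_i\sgn{U(t_i)}\,\sgn{V'(t_i)}$ is the standard one and is applicable here because transversality of each branch to $S^1_r$ makes each zero $t_i$ of $V$ simple, while the isolation just proved gives $U(t_i)\neq 0.$

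The sign bookkeeping you flag as the delicate point does close up consistently. Put $u=\nabla F(p_i)/\abs{\nabla F(p_i)},$ $\nu=p_i/r,$ and write the outward unit tangent of $\gamma_i$ at $p_i$ as $\tau_i=\sigma_i u^{\perp}$ with $\sigma_i=\pm 1.$ Since $\gamma_i$ meets $S^1_r$ exactly once and transversally, it exits there, so $\tau_i\cdot\nu>0,$ i.e. $\sgn{(u^{\perp}\cdot\nu)}=\sigma_i.$ Skew-symmetry of the rotation gives $u\cdot\nu^{\perp}=-u^{\perp}\cdot\nu,$ whence $\sgn{V'(t_i)}=\sgn{\left(r\,\nabla F(p_i)\cdot\nu^{\perp}\right)}=-\sigma_i;$ on the other hand your monotonicity observation gives $\varepsilon_i=\sgn{(\nabla g(p_i)\cdot\tau_i)}=-\sigma_i\,\sgn{U(t_i)}.$ Eliminating $\sigma_i$ yields precisely your claimed relation $\sgn{U(t_i)}=\varepsilon_i\,\sgn{V'(t_i)},$ so each crossing contributes $\varepsilon_i$ and $2\cdot\ind{h(g,F)}{0}=\sum_i\varepsilon_i=b_+(g,F)-b_-(g,F),$ with the factor $2$ and no spurious sign. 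Two small points deserve a sentence in a final write-up: the degenerate case $F^{-1}(\set{0})\cap D^2_r=\set{0}$ (then $V$ never vanishes on $S^1_r$ and both sides are zero), and the justification that $g$ is strictly monotone along a branch, which needs $\gamma_i'$ to be a nonvanishing continuous (hence constant-sign) multiple of $(\nabla F)^{\perp}$ --- available because off the origin the branch is a regular analytic curve. With those remarks added, your argument is a complete, self-contained proof of the cited theorem.
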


\begin{corollary}[\cite{Sf}]{\label{wnszafr}}
If $g_+\colon\R^2\rightarrow\R$ is a~nonnegative test function
for an admissible mapping $F\colon\R^2\rightarrow\R$ then
$0\in\R^2$ is isolated in $h(g_+,F)^{-1}(\set{0})$ and
\begin{equation}{\label{wnszafrform}}
b(F)=b_+(g_+,F)=2\cdot\ind{h(g_+,F)}{0}.
\end{equation}
\end{corollary}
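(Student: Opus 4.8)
The plan is to obtain Corollary~\ref{wnszafr} as an immediate consequence of Theorem~\ref{szafr}, the only extra ingredient being that nonnegativity of $g_+$ forces the count of negative components to vanish. First I would apply Theorem~\ref{szafr} with $g=g_+$. Since $g_+$ is a~test function for the admissible mapping $F,$ the theorem gives at once that $0\in\R^2$ is isolated in $h(g_+,F)^{-1}(\set{0})$ (which is the first assertion of the corollary) and that the signed count formula $b_+(g_+,F)-b_-(g_+,F)=2\cdot\ind{h(g_+,F)}{0}$ holds. Thus the entire remaining task reduces to establishing $b_-(g_+,F)=0,$ equivalently $b_+(g_+,F)=b(F).$

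Next I would examine the sign of $g_+$ on the curves constituting $F^{-1}(\set{0})\cap D^2_r\backslash\set{0}$ for sufficiently small $r>0.$ By admissibility of $F$ this set is a~finite union of disjoint analytic curves, and by the test function property recalled above, $g_+$ has constant sign on each such component. The key point is that $g_+$ cannot vanish at any point of a~component: such a~point would be a~nonzero element of $g_+^{-1}(\set{0})\cap F^{-1}(\set{0}),$ contradicting the defining requirement that $0$ be isolated in that intersection (for $r$ small enough that $D^2_r$ meets the intersection only at the origin). Hence on every component $g_+$ is nowhere zero, and since $g_+\geq 0,$ it is in fact strictly positive there.

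Consequently each component contributes to $b_+(g_+,F)$ and none to $b_-(g_+,F),$ so $b_-(g_+,F)=0$ and $b_+(g_+,F)=b(F).$ Substituting $b_-(g_+,F)=0$ into the formula supplied by Theorem~\ref{szafr} yields $b(F)=b_+(g_+,F)=2\cdot\ind{h(g_+,F)}{0},$ which is precisely~\eqref{wnszafrform}.

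I do not expect a~genuine obstacle here, as all the analytic substance is inherited from Theorem~\ref{szafr} and the structure of admissible zero sets; the only new step is the elementary sign argument. The single point demanding a~little care is the passage from \emph{constant sign} on a~component to \emph{strictly positive} for a~nonnegative $g_+$: one must rule out a~component along which $g_+$ vanishes identically, and this is exactly what the isolatedness of $g_+^{-1}(\set{0})\cap F^{-1}(\set{0})$ at the origin guarantees.
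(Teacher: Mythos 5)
Your proposal is correct, and it is exactly the intended derivation: the paper itself states Corollary~\ref{wnszafr} without proof (citing~\cite{Sf}), and the evident route is precisely yours --- invoke Theorem~\ref{szafr} with $g=g_+$ and kill $b_-(g_+,F)$ by noting that $g_+$ cannot vanish on any component of $F^{-1}(\set{0})\cap D^2_r\backslash\set{0}$ once $r$ is small enough that $g_+^{-1}(\set{0})\cap F^{-1}(\set{0})\cap D^2_r=\set{0}$, so nonnegativity forces strict positivity there. You even handle the one delicate point (constant sign versus strict positivity) correctly, and in fact your nonvanishing argument makes the quoted constant-sign fact superfluous.
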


Let $b_i(F),$ $g_i,$ $i=1,\ldots,4,$ be such as in
Remark~\ref{probnerem}.

\begin{corollary}[\cite{IR}]{\label{wnIRcw}}
If an admissible mapping $F\colon\R^2\to\R$ has no zeros
on the coordinate axes in a~neighbourhood of the origin then
\begin{align*}
b_1(F)+b_2(F)+b_3(F)+b_4(F) & =  2\cdot\ind{h(g_1,F)}{0}, \\
b_1(F)-b_2(F)-b_3(F)+b_4(F) & =  2\cdot\ind{h(g_2,F)}{0}, \\
b_1(F)+b_2(F)-b_3(F)-b_4(F) & =  2\cdot\ind{h(g_3,F)}{0}, \\
b_1(F)-b_2(F)+b_3(F)-b_4(F) & =  2\cdot\ind{h(g_4,F)}{0}. \\
\end{align*}
\end{corollary}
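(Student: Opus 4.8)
The plan is to derive each of the four identities by applying Theorem~\ref{szafr} to the corresponding test function $g_i$ and then translating the quantities $b_+(g_i,F)$ and $b_-(g_i,F)$ into sums of the quadrant counts $b_1(F),\ldots,b_4(F).$ First I would record that $g_1,\ldots,g_4$ really are test functions for $F$: since $F$ has no zeros on the coordinate axes in a neighbourhood of the origin and the zero set of each $g_i$ is contained in the union of the coordinate axes, the origin is the only common zero of $g_i$ and $F$ near $0,$ so Theorem~\ref{szafr} applies to each pair $(g_i,F).$ This is exactly the remark recorded in Remark~\ref{probnerem}, and for $g_1$ the first identity is just Corollary~\ref{wnszafr} with $g_+=g_1.$

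The crucial geometric observation comes next. For sufficiently small $r>0$ the set $F^{-1}(\set{0})\cap D^2_r\backslash\set{0}$ is a finite disjoint union of analytic curves (admissibility of $F$), and because $F$ does not vanish on the axes near $0,$ none of these curves meets the coordinate axes. Hence each connected component lies entirely in a single open quadrant, so the total count splits as $b(F)=b_1(F)+b_2(F)+b_3(F)+b_4(F),$ and on a component lying in the $i$th quadrant every $g_\ell$ keeps a constant sign determined solely by the signs of $\lambda_1$ and $\lambda_2$ in that quadrant.

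Reading off these signs then gives, for each test function, the decomposition of $b_\pm(g_\ell,F)$ into quadrant counts: $g_1$ is positive off the origin, so $b_+(g_1,F)=b_1(F)+b_2(F)+b_3(F)+b_4(F)$ and $b_-(g_1,F)=0$; the function $g_2=\lambda_1$ is positive in quadrants $1,4$ and negative in $2,3$; $g_3=\lambda_2$ is positive in $1,2$ and negative in $3,4$; and $g_4=\lambda_1\lambda_2$ is positive in $1,3$ and negative in $2,4.$ Substituting each of these into the identity $b_+(g_\ell,F)-b_-(g_\ell,F)=2\cdot\ind{h(g_\ell,F)}{0}$ from Theorem~\ref{szafr} produces the four stated equations in order.

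No step presents a genuine obstacle; the proof is essentially sign bookkeeping on top of Theorem~\ref{szafr}. The only points that require care are confirming that each curve stays inside a single quadrant — which is precisely where the hypothesis that $F$ has no zeros on the axes, together with the smallness of $r,$ enters — and keeping the four quadrant sign patterns consistent so that the right-hand combinations $b_1\pm b_2\pm b_3\pm b_4$ come out correctly.
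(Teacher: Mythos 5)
Your proof is correct and follows exactly the intended route: the paper states Corollary~\ref{wnIRcw} without proof, citing~\cite{IR}, and the standard derivation there is precisely your argument — apply Theorem~\ref{szafr} to each of $g_1,\ldots,g_4$ and use the fact that, since $F$ has no zeros on the coordinate axes near the origin, every branch of $F^{-1}(\set{0})\cap D^2_r\backslash\set{0}$ lies in a single open quadrant, so that $b_{+}(g_\ell,F)$ and $b_{-}(g_\ell,F)$ decompose into the quadrant counts $b_1(F),\ldots,b_4(F)$ with the sign patterns you record. Your two points of care (each curve staying in one quadrant, and the consistency of the four sign patterns) are indeed the only places where the hypothesis enters, and your bookkeeping is accurate.
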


\section*{Acknowledgements}

This paper is based on a~part of the author's PhD
thesis~\cite{RPhD}. The author wishes to express his gratitude
to his thesis advisor, Professor S{\l}awomir Rybicki, for his
remarks.

\end{document}